\tikzstyle{none}=[]
\tikzstyle{new style 0}=[draw,circle,fill=white]
\tikzstyle{new edge style 1}=[draw,dashed]
\tikzstyle{new edge style 1}=[draw,dashed]
\pgfplotsset{compat=1.14}
\numberwithin{equation}{section}
\newtheorem{thm}{Theorem}[section]
\newtheorem{prop}[thm]{Proposition}
\newtheorem{cor}[thm]{Corollary}
\newtheorem{exm}[thm]{Example}
\newtheorem{df}[thm]{Definition}
\newtheorem{rem}[thm]{Remark}
\providecommand{\keywords}[1]
{
  \small	
  \textbf{\textit{Keywords---}} #1
}
\providecommand{\subject}[1]
{
  \small	
  \textbf{\textit{MSC2020---}} #1
}
\title{A Study of Hypergraph Using Null Spaces of the Incidence Matrix and its Transpose.}
\author{Samiron Parui\thanks{School of Mathematical Sciences,  National Institute of Science Education and Research Bhubaneswar,  Bhubaneswar, Padanpur, Odisha 752050, India\\
Email: \href{mailto:me@samironparui@gmail.com}{samironparui@gmail.com}
}}
\date{\today}
\begin{document}

\maketitle
\hrule
\begin{abstract}In this study, we explore the substructures of a hypergraph that lead us to linearly dependent rows (or columns) in the incidence matrix of the hypergraph.
     These substructures are closely related to the spectra of various hypergraph matrices, including the signless Laplacian, adjacency, Laplacian, and adjacency matrices of the hypergraph's incidence graph. Specific eigenvectors of these hypergraph matrices serve to characterize these substructures. We show that vectors belonging to the nullspace of the adjacency matrix of the hypergraph's incidence graph provide a distinctive description of these substructures. Additionally, we illustrate that these substructures exhibit inherent similarities and redundancies, which manifest in analogous behaviours during 
    random walks and similar values of hypergraph centralities.
\end{abstract}

\keywords{Hypergraphs, Matrices, Hypergraph structures, Eigenvalue, Vector space}

\subject{
		05C65, 
  05C50 
		; Secondary
    05C81, 
    05C69 
	}
\vspace{5pt}\hrule

\section{Introduction}
\label{sec:introduction}
A hypergraph $H$ is an ordered pair $(V(H),E(H))$. The non-empty set, $V(H)$, is called the vertex set, and the hyperedge set, $E(H)$, is a subset of $\mathcal{P}({V(H)})$, the power set of $V(H)$. For any vertex $v\in V(H)$, the collection $E_v(H)=\{e\in E(H):v\in e\}$ is referred to as the \emph{star of the vertex} $v$. 
The references encompassing fundamental terminology and concepts related to hypergraphs include  \cite{Berge-hypergraph,Berge-graph-hypergraph,Voloshin-Vitaly-graph-hypergraph,Bretto-hypergraph}. A hypergraph $H$ is a \emph{finite hypergraph} if the vertex set $V(H)$ contains a finite number of elements. In this work, we consider finite hypergraphs only. Hypergraphs are investigated using various matrices, such as adjacency matrix, Laplacian matrix, signless Laplacian matrix etc. (\cite{hg-mat,Sarkar-banerjee-2020,my1st,rodriguez2003laplacian,Trevisan_signless,Swarup-panda-2022-hypergraph}). Here, we study the relation between a hypergraph $H$, and the null space of the incidence matrix $I_H$. Certain non-zero vectors in the null space of $I_H$ and in the null space its transpose $I_H^T$ tell us about particular structures within the hypergraph $H$. The adjacency matrix $A_{G_H}$ of the incidence graph $G_H$ of the hypergraph $H$ is such that $$ A_{G_H}=\left[\begin{smallmatrix}
            0_{n}& I_H\\
            I_H^T&0_{m}
        \end{smallmatrix}\right].$$
        Therefore, any vector in the null space of both the matrices $I_H$ and $I_H^T$ can be embedded in the null space of $A_{G_H}$. Given any non-zero vector $y$ in the null space of $A_{G_H}$, we have $I_Hy|_{E(H)}=0$, and $I_H^Ty|_{V(H)}=0$. Here $y|_{E(H)}$, and $y|_{V(H)} $ are the restrictions of $y$ in $V(H)$, and $E(H)$, respectively. Since $y$ is non-zero, at least one of $y|_{E(H)}$, and $y|_{V(H)}$ is non-zero. Therefore, any non-zero element of the null space of $A_{G_H}$ corresponds to at least one non-zero element in the null space of $I_H$ or $I_H^T$. A non-zero element in the null space of $I_H^T$ and $I_H$ corresponds to a set of linearly dependent rows and columns, respectively, in the incidence matrix $I_H$. Each row and column of $I_H$ corresponds to a vertex and hyperedge, respectively, in $H$. If a subset of vertices $U\subseteq V(H)$ is such that the corresponding collection of rows in $I_H$ is linearly dependent, then we refer to $U$ as a \emph{linearly dependent set of vertices}. Similarly, a \emph{collection of hyperedges $E\subseteq E(H)$ is linearly dependent} if the corresponding collection of columns is linearly dependent.
        
        The presence of linearly dependent sets of vertices and hyperedges unveils intriguing substructures within hypergraphs. A fundamental instance arises when the vertex-edge incidence matrix $I_H$ has identical rows. That is, a collection of vertices exists such that each vertex of the collection has the same star. Given $E\subseteq E(H)$, if $W_E\subseteq V(H)$ is such that $E_v(H)=E $ for all $v\in W_E$ and $E_u(H)\ne E$ for all $u\notin W_E$, then the collection $W_E$ is called \emph{unit}. The relation between the unit and the spectra of different hypergraph matrices like signless Laplacian, Laplacian, and adjacency are studied in \cite{unit}. In this article, we show that the null space of $I_H^T$ reflects the existence of units in the hypergraph $H$. If $W\subseteq V(H)$ is a maximal set with the property that given any function\footnote{For any finite hypergraph $H$, with $n$ vertices and $m$ hyperedges, one convention is to enumerate the vertex set $V(H)=\{v_1,v_1,\ldots,v_n\}$, and the hyperedge set $E(H)=\{e_1,e_2,\ldots,e_m\}$, and represent the incidence matrix $I_H$ as an $n\times m$ matrix whose $(i,j)$-th element corresponds to the vertex $v_i$, and the hyperedge $e_j$. We avoid the enumeration, and the rows and columns of the $I_H$ are indexed by $V(H)$ and $E(H)$, respectively. As a consequence, instead of being column vectors in $\mathbb{R}^m$ and $\mathbb{R}^n$ the elements of the null space of $I_H$ and $I_H^T$ are, respectively, the function of the form $x:E(H)\to\mathbb{R}$ and $y:V(H)\to\mathbb{R}$.} $x:V(H)\to\mathbb{R}$  with $x(u)=0$ for all $u\notin W$, and $\sum_{u\in W}x(u)=0$ we have $I_H^Tx=0$, then $W$ is a unit in $H$ (see the \Cref{null-ag}). 
        Not only units but any linearly dependent collection of vertices is manifested in the null space of $I^T_H$, and hence in the null space of $A_{G_H}$(see the \Cref{Ix=0_ldv}, and the \Cref{0-ev-ld}). Similarly, the \Cref{I_H_lde} shows that any linearly dependent collection of hyperedges is reflected in the null space of $I_H$. In the \Cref{par-part}, we show that given two disjoint subsets $U$, and $V$ of the vertex set $V(H)$, if the difference of characteristic function\footnote{Given any $U\subseteq V(H)$, the \emph{characteristic function} $\chi_U:V(H)\to\mathbb{R}$ is such that $\chi_U(u
        )=1$ if $u\in U$, and otherwise $\chi_U(u
        )=0$.} $\chi_U-\chi_V$ belongs to the null space of $I_H^T$, then $|U\cap e|=|V\cap e|$ for all $e\in E(H)$. We say $U$ and $V$ form an equal partition of hyperedges in $H$. 
The \Cref{app-matrices} shows units, equal partitions of hyperedges, and other collections of linearly dependent vertices are related to the spectra of the signless Laplacian, adjacency, and Laplacian matrices associated with hypergraph $H$. Specific linearly dependent collections of vertices such as unit, an equal partition of hyperedges exhibit similarity in random walks on hypergraphs (see the \Cref{rwh}) and in some hypergraph centrality (see \Cref{cent}). The \Cref{count} shows that some counting methods, like the Pigeonhole Principle and the principle of inclusion-exclusion, can be proved using the linearly dependent collection of vertices and hyperedges.

\section{Linearly independent  set of vertices and hyperedges}\label{lindepve}
The incidence matrix $I_H=\left[i_{ve}\right]_{v\in V(H),e\in E(H)}$ of a hypergraph $H$ is the matrix with its rows and columns indexed by the vertices and hyperedges of the hypergraph $H$, respectively. Each entry $i_{ve}$ is 1 if the vertex $v\in e$ otherwise $i_{ve}=0$.  
For a hyperedge $e\in E(H)$, the $e$-th column of $I_H$ corresponds to the characteristics function of $e$, that is $\chi_{_e}:V(H)\to\{0,1\}$ defined by $\chi_{_e}(v)=i_{ve}$ for all $v\in V(H)$. Similarly, for $v\in V(H)$, the $v$-th row of $I_H$ gives the characteristic function of $E_v(H)$, which is $s_v:E(H)\to\{0,1\}$ such that $s_v(e)=i_{ve}$. We call a collection of vertices in $H$ a \emph{linearly dependent set of vertices} if the corresponding collection of rows in $I_H$ is linearly dependent. Similarly, a collection of hyperedges in $H$ is \emph{a linearly dependent set of hyperedges} in $H$ if the corresponding collection of columns in $I_H$ is linearly dependent.
 \begin{df}[Linearly dependent set of vertices and hyperedges]
  Let $H$ be a hypergraph.
 Given any $U\subseteq V(H)$, we say $U$ is \emph{linearly independent (dependent)}  if $\{s_v:v\in U\}$ is linearly independent (dependent).
      A collection of hyperedges $E\subseteq E(H)$ is \emph{linearly independent (dependent)} if $\{\chi_e:e\in E(H)\}$ is linearly independent (dependent).
 \end{df}
 Given a linearly dependent set of vertices $U=\{v_1,v_2,\ldots,v_k\}\subseteq V(H)$, there exists $c_1,c_2,\ldots c_k\in \mathbb{R}$ (not all are zeroes simultaneously) such that $\sum_{i=1}^kc_is_{v_i}=0$. This gives us a \emph{coefficient vector} $c_U:V(H)\to\mathbb{R}$ defined by
 \[c_U(v)=\begin{cases}
     c_i&\text{~if~}v=v_i\text{~for~}i=1,2,\ldots,k,\\
     0&\text{~if~}v\notin U.
 \end{cases}\]
 For any linearly dependent set of vertices \(U\) within graph \(H\), the coefficient vector \(c_U\) is not unique. Nevertheless, owing to the relation \(\sum_{i=1}^k c_i s_{v_i} = 0\), it follows that, for any coefficient vector \(c_U\) associated with \(U\), the equality
\( \sum_{v \in e} c_U(v) = \sum_{i=1}^k c_U(v_i) s_{v_i}(e) = 0 \)
holds for all \(e \in E(H)\). For any function $x:V\to\mathbb{R}$ on a non-empty set $V$, the \emph{support of $x$} is $supp(x)=\{v\in V: x(v)\ne 0\}$. For any linearly dependent set of vertices $U$ in $H$ with a coefficient vector $c_U$, the support of the coefficient vector, $supp(c_U)\subseteq U$.
 This leads to the following result.
 \begin{prop}\label{Ix=0_ldv}
     Let $H$ be a hypergraph. A subset $U\subseteq V(H)$ is linearly dependent if and only if $I_H^Tx=0$ for a non-zero vector $x:V(H)\to\mathbb{R}$ such that $supp(x)\subseteq U$.
 \end{prop}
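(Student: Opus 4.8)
The plan is to unwind both directions directly from the definition of the incidence matrix, since the statement is essentially a reformulation of linear dependence of a set of rows in terms of the single equation $I_H^Tx=0$. The one computation I would record at the outset is the identity
\[
(I_H^Tx)(e)=\sum_{v\in V(H)}i_{ve}\,x(v)=\sum_{v\in V(H)}x(v)\,s_v(e)\qquad\text{for every }e\in E(H),
\]
so that $I_H^Tx=0$ is equivalent to the functional identity $\sum_{v\in V(H)}x(v)\,s_v=0$ on $E(H)$; moreover, since $x(v)=0$ whenever $v\notin supp(x)$, the sum may be restricted to $supp(x)$.

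For the forward direction, suppose $U$ is linearly dependent. By definition there is a coefficient vector $c_U:V(H)\to\mathbb{R}$, not identically zero, with $supp(c_U)\subseteq U$ and $\sum_{v\in U}c_U(v)\,s_v=0$; this was already observed in the discussion preceding the statement. Taking $x=c_U$ and invoking the identity above yields $I_H^Tx=0$ with $x$ non-zero and $supp(x)\subseteq U$, as required.

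For the converse, suppose $I_H^Tx=0$ for some non-zero $x$ with $supp(x)\subseteq U$. By the identity, $\sum_{v\in supp(x)}x(v)\,s_v=0$, and the coefficients $x(v)$ for $v\in supp(x)$ are all non-zero with $supp(x)\neq\varnothing$ since $x\neq 0$. Hence the family $\{s_v:v\in supp(x)\}$ is linearly dependent, and since $supp(x)\subseteq U$, padding the combination with zero coefficients on $U\setminus supp(x)$ shows that $\{s_v:v\in U\}$ is linearly dependent as well; that is, $U$ is linearly dependent.

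I do not anticipate a genuine obstacle here: the only point requiring a little care is keeping track of supports — ensuring that the witnessing linear combination is genuinely supported inside $U$, and that passing from a dependent subfamily to the full family $\{s_v:v\in U\}$ is legitimate. The rest is bookkeeping with the definition of $I_H$.
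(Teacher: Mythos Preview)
Your proof is correct and follows essentially the same route as the paper: both directions hinge on the identity $(I_H^Tx)(e)=\sum_{v\in V(H)}x(v)\,s_v(e)$, reducing the claim to the tautology that a non-trivial vanishing linear combination of the $s_v$'s supported in $U$ exists if and only if $U$ is linearly dependent. Your write-up is slightly more explicit about restricting to $supp(x)$ and then padding by zeros, but there is no substantive difference in approach.
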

 \begin{proof}
     If $U=\{v_1,\ldots,v_k\}\subseteq V(H)$ is linearly dependent, then there exists a non-zero co-efficient vector $c_U:V(H)\to\mathbb{R}$ such that $(\sum\limits_{v\in U}c_U(v)s_v)=0$. Since $supp(c_U)\subseteq U$,
     $$(I_H^Tc_U)(e)=\sum\limits_{v\in V(H)}i_{ve}c_U(v)=(\sum\limits_{v\in U}c_U(v)s_v)(e)=0 \text{~for all~}e\in E(H).$$
     Therefore, $I_H^Tc_U=0$. Conversely, if $I_H^Tx=0$ with $supp(x)\subseteq U$, then for all $e\in E(H)$,
     $$(\sum\limits_{v\in U}x(v)s_v)(e)= (I_H^Tx)(e)=0.$$ Therefore, $U$ is a linearly dependent set of vertices in $H$.
 \end{proof}
 
 Similarly, for any linearly dependent set of hyperedges $E=\{e_1,e_2,\ldots,e_k\}\subseteq E(H)$, there exists $c_1,c_2,\ldots,c_k$ (not all are zeroes simultaneously) such that $\sum_{i=1}^kc_i\chi_{e_i}=0$ and this leads us to a coefficient vector $c_E:E(H)\to\mathbb{R}$( is not unique ) such that
 \[c_E(e)=\begin{cases}
     c_i&\text{~if~}e=e_i\text{~for~}i=1,2,\ldots,k,\\
     0&\text{~if~}e\notin E.
 \end{cases}\]
 The coefficient vector $c_E$ leads us to a result similar to $\Cref{Ix=0_ldv}$ for linearly dependent hyperedges.
\begin{prop}\label{I_H_lde}
    Let $H$ be a hypergraph. A collection of hyperedge $E\subseteq E(H)$ is linearly dependent if and only if $I_Hy=0$ for a non-zero vector $y:E(H)\to\mathbb{R}$ with $supp(y)\subseteq E$.\end{prop}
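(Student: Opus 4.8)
The plan is to mirror the proof of \Cref{Ix=0_ldv} verbatim, with hyperedges playing the role of vertices and columns playing the role of rows. The key observation is that the $e$-th column of $I_H$ is exactly the characteristic function $\chi_e$, so a linear combination of columns corresponds to a linear combination of the $\chi_e$'s, and this is what $I_H$ computes when applied to a vector indexed by $E(H)$.

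First I would assume $E=\{e_1,\ldots,e_k\}\subseteq E(H)$ is linearly dependent. By definition this gives scalars $c_1,\ldots,c_k$, not all zero, with $\sum_{i=1}^k c_i\chi_{e_i}=0$, and hence the associated coefficient vector $c_E:E(H)\to\mathbb{R}$ with $supp(c_E)\subseteq E$, which is non-zero. Then for each $v\in V(H)$ I would compute
\[(I_Hc_E)(v)=\sum_{e\in E(H)}i_{ve}c_E(e)=\sum_{e\in E}c_E(e)\chi_e(v)=\Bigl(\sum_{i=1}^k c_i\chi_{e_i}\Bigr)(v)=0,\]
so $I_Hc_E=0$ with $y=c_E$ the desired non-zero vector supported in $E$.

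For the converse I would take a non-zero $y:E(H)\to\mathbb{R}$ with $supp(y)\subseteq E$ and $I_Hy=0$. Then for every $v\in V(H)$,
\[\Bigl(\sum_{e\in E}y(e)\chi_e\Bigr)(v)=\sum_{e\in E(H)}i_{ve}y(e)=(I_Hy)(v)=0,\]
so $\sum_{e\in E}y(e)\chi_e=0$ as a function on $V(H)$; since $y$ is non-zero and supported in $E$, the coefficients are not all zero, which exhibits $\{\chi_e:e\in E\}$ — equivalently the columns of $I_H$ indexed by $E$ — as linearly dependent, so $E$ is a linearly dependent set of hyperedges.

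There is essentially no obstacle here: the only thing to be careful about is bookkeeping the support condition ($supp(y)\subseteq E$ lets us freely swap the sum over $E(H)$ for a sum over $E$ in both directions) and noting that $I_Hy$ is a function on $V(H)$ rather than $E(H)$, which is the transpose-level difference from \Cref{Ix=0_ldv}. The proof is the exact dual of the previous proposition and can be stated in a few lines.
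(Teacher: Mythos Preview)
Your proof is correct and follows essentially the same approach as the paper's own proof: both directions use the coefficient vector $c_E$ (respectively $y$) and the identity $(I_Hy)(v)=\sum_{e\in E(H)}i_{ve}y(e)=\sum_{e\in E}y(e)\chi_e(v)$ to pass between $I_Hy=0$ and the linear dependence relation $\sum_{e\in E}y(e)\chi_e=0$. If anything, your write-up is slightly more explicit about the support bookkeeping and the non-vanishing of the coefficients in the converse direction.
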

\begin{proof}
    If $E$ is linearly dependent, then there exists a coefficient vector $c_E:E(H)\to\mathbb{R}$ such that $\sum\limits_{e\in E(H)}c_E(e)\chi_{e}=0$. For all $v\in V(H)$, we have  $(I_Hc_E)(v)=\sum\limits_{e\in E(H)}i_{ve}c_E(e)=\sum\limits_{e\in E(H)}c_E(e)\chi_{e}(v)=0$. Therefore, $I_Hc_E=0$. Conversely, If $I_Hy=0$ with $supp(y)\subseteq E$, then $(\sum\limits_{e\in E}y(e)\chi_{e})(v)=(I_Hy)(v)=0 $ for all $v\in V(H)$. Therefore, $E$ is a linearly dependent collection of hyperedges in $H$.
\end{proof}
Given a rectangular matrix $M$, the row rank and column rank of $M$ are the same. This leads to the following result.
 \begin{thm}\label{lin-ind-v=e}
    A hypergraph contains $r$ linearly independent vertices if and only if it contains $r$ linearly independent hyperedges.
 \end{thm}
 \begin{proof}
     Let $H$ be a hypergraph. If $v_1,\ldots,v_r$ are the $r$ linearly independent vertices in $H$, then $s_{v_1},\ldots,s_{v_r}$ are linearly independent vectors. Consider the vectors $(I_Hs_{v_i}):V(H)\to \mathbb{R}$ for all $i=1,\ldots, r$. We claim that $\{I_Hs_{v_i}:i=1,\ldots,r\}$ are linearly independent. Assuming otherwise, if $I_Hs_{v_1},\ldots, I_Hs_{v_r}$ are linearly dependent vectors then
     $$c_1I_Hs_{v_1}+\ldots +c_rI_Hs_{v_r}=0$$ for some $c_1,\ldots,c_r\in \mathbb{R}$ (not all $0$ simultaneously).  The vector $y=c_1s_{v_1}+\ldots +c_rs_{v_r}\in\langle \{s_{v_i}:i=1,\ldots,r\}\rangle$, the vector space spanned by $\{s_{v_i}:i=1,\ldots,r\}$. Since $I_Hy=0$, thus, $\sum\limits_{e\in E(H)}s_{v_j}(e)y(e)=\sum\limits_{e\in E(H)}i_{v_je}y(e)=(I_Hy)(v_j)=0$
     for all $j=1,\ldots,r$. 
     Consequently, $y $ is orthogonal to $s_{v_i}$ for all $i=1,\ldots,r$. That is, $y\in \langle \{s_{v_i}:i=1,\ldots,r\}\rangle^\perp$, the orthogonal complement of the vector space $\langle \{s_{v_i}:i=1,\ldots,r\}\rangle$. Therefore, $c_1s_{v_1}+\ldots +c_rs_{v_r}=y=0$, which is a contradiction to the fact that $s_{v_1},\ldots,s_{v_r}$ are linearly independent vectors.
     Therefore, $\{I_Hs_{v_i}:i=1,\ldots,r\}$ are linearly independent vectors. Since $ I_Hs_{v_i}=\sum\limits_{e\in E(H)}s_{v_i}(e)\chi_{e}$
     for any $i=1,\ldots,r$, the $r$ linearly independent vectors $\{I_Hs_{v_i}:i=1,\ldots,r\}$
     belongs to a vector space spanned by the vectors $\{\chi_{e}:e\in E(H)\}$. Consequently, at least $r$ linearly independent hyperedges exist in $H$.

 Conversely, if $H$ contains $r$ linearly independent hyperedges $e_1,\ldots,e_r$, then similarly we can show that $I_H^T\chi_{e_1},\ldots,I_H^T\chi_{e_r}$ are linearly independent. Since $I_H^T\chi_{e_i}=\sum\limits_{u\in V(H)} \chi_{e_i}(u)s_u$ for all $i=1,\ldots,r$, the linearly independent vectors $\chi_{e_1},\ldots,\chi_{e_r}$ belongs to the vector space spanned by $\{s_u:u\in V(H)\}$. Consequently, at least $r$ linearly independent vertices exist in $H$.
 \end{proof}

\subsection{Adjacency matrix of incidence graph}\label{A_G_H}
Given a hypergraph $H$, the \emph{Incidence graph} $G_H$ of the hypergraph $H$ is a bipartite graph such that $V(G_H)=V(H)\cup E(H)$, and $E(G_H)=\{\{v,e\}\in V(H)\times E(H):v\in e\}$.
The adjacency matrix $A_G$ of a graph\footnote{A hypergraph $H$ is called $m$-uniform if $|e|=m$ for all $e\in E(H)$. Here, graph means $2$-uniform hypergraph. That is, we are considering only simple graphs without any loop.} $G$ on $n$ vertices is an $n\times n$ matrix whose rows and columns are indexed by the vertex set $V(G)$ of $G$, and $A_G=\left(a_{uv}\right)_{u,v\in V(G)}$ such that for all distinct $u,v\in V(G)$, $a_{uv}=1$ if $\{u,v\}\in E(G)$, and otherwise $a_{uv}=0$. The diagonals entry $a_{uu}=0$ for all $u\in V(H)$. Given a hypergraph $H$ its dual $H^*$ is a hypergraph such that $V(H^*)=E(H)$, and $E(H^*)=\{E_v(H):v\in V(H)\}$.
The Incidence graph $G_H$ uniquely determines the collection $\{H, H^*\}$ containing the hypergraph $H$ and its dual $H^*$. Thus, the spectrum of $A_{G_H}$ is expected to encrypt certain information about $H$.
     \subsubsection{\texorpdfstring{Zero eigenvalues of $A_{G_H}$}{Zero Eigenvalues of the Adjacency Matrix of Incidence Graph }}
       The adjacency matrix $A_{G_H}$ of the incidence graph $G_H$ can be represented as
    \begin{align}\label{adj-incident}
        A_{G_H}=\left[\begin{smallmatrix}
            0_{n}& I_H\\
            I_H^T&0_{m}
        \end{smallmatrix}\right],   
    \end{align}
     where $0_{p}$ is a $p\times p$ matrix with all its entry $0$ for all $p\in\mathbb{N}$, and $I_H$ is the incidence matrix of the hypergraph $H$.  
     Given two functions $y:E(H)\to\mathbb{R}$, and $z:V(H)\to \mathbb{R}$, the embedding of $y$, and $z$ in $\mathbb{R}^{V(G_H)}$ are respectively
the functions $\Tilde{y}:V(G_H)\to\mathbb{R}$, and $\Tilde{z}:V(G_H)\to\mathbb{R}$ defined by 
\[\Tilde{y}(u)=
\begin{cases}
   0&\text{~if~}u\in V(H),\\
   y_i(u)&\text{~if~}u\in E(H),
\end{cases}
\text{~and~}
\Tilde{z}(u)=
\begin{cases}
   0&\text{~if~}u\in E(H),\\
   z_i(u)&\text{~if~}u\in V(H).
\end{cases}\]
     Given any function $x:V(G_H)\to\mathbb{R}$, if  $x|_{_{V(H)}}:V(H)\to\mathbb{R}$, and  $x|_{_{E(H)}}:E(H)\to\mathbb{R}$ are the restriction of  $x$ to $V(H)$, and $E(H)$ respectively, then $(A_{G_H}x)(v)=({I_Hx|_{_{E(H)}}})(v)$ for all $v\in V(H)$, and $(A_{G_H}x)(e)=(I_H^Tx|_{_{V(H)}})(e)$ for all $e\in E(H)$.
     Thus $I_H^T z=0$ if and only if $A_{G_H}\Tilde{z}=0$ for any $z:V(H)\to \mathbb{R}$, and $I_Hy=0$ if and only if $ A_{G_H}\Tilde{y}=0$ for any $y:E(H)\to\mathbb{R}$.
     This leads us to the following result.
     \begin{thm}\label{0-ev-ld}
         Let $H$ be a hypergraph. The matrix $A_{G_H}$
has a $0$-eigenvalue if and only if $H$ has at least one of the following.
\begin{enumerate}
    \item A linearly dependent set of vertices.
    
    \item A linearly dependent set of hyperedges.
  \end{enumerate}
  \end{thm}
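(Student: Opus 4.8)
The plan is to read off both implications directly from the block structure \eqref{adj-incident} together with \Cref{Ix=0_ldv} and \Cref{I_H_lde}. The key observation, already recorded just above the statement, is the pair of equivalences $I_H^T z = 0 \iff A_{G_H}\tilde z = 0$ for $z:V(H)\to\mathbb{R}$, and $I_H y = 0 \iff A_{G_H}\tilde y = 0$ for $y:E(H)\to\mathbb{R}$, where $\tilde z$ and $\tilde y$ are the zero-extensions of $z$ and $y$ to $V(G_H)$. So the whole argument amounts to translating a kernel vector of $A_{G_H}$ into a kernel vector of $I_H$ or $I_H^T$ and back.

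For the ``only if'' direction, suppose $A_{G_H}$ has a $0$-eigenvalue, witnessed by a non-zero $x:V(G_H)\to\mathbb{R}$ with $A_{G_H}x = 0$. Writing $x|_{_{V(H)}}$ and $x|_{_{E(H)}}$ for the two restrictions, the identities $(A_{G_H}x)(v) = (I_H x|_{_{E(H)}})(v)$ and $(A_{G_H}x)(e) = (I_H^T x|_{_{V(H)}})(e)$ give $I_H x|_{_{E(H)}} = 0$ and $I_H^T x|_{_{V(H)}} = 0$. Since $x\ne 0$, at least one of the two restrictions is non-zero. If $x|_{_{V(H)}}\ne 0$, then applying \Cref{Ix=0_ldv} with $U = supp(x|_{_{V(H)}})$ shows $U$ is a linearly dependent set of vertices, giving item~1. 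If instead $x|_{_{E(H)}}\ne 0$, then \Cref{I_H_lde} applied with $E = supp(x|_{_{E(H)}})$ shows $E$ is a linearly dependent set of hyperedges, giving item~2.

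For the ``if'' direction, suppose first that $H$ has a linearly dependent set of vertices $U$. By \Cref{Ix=0_ldv} there is a non-zero $x:V(H)\to\mathbb{R}$ with $supp(x)\subseteq U$ and $I_H^T x = 0$; its zero-extension $\tilde x$ is a non-zero vector in $\mathbb{R}^{V(G_H)}$ with $A_{G_H}\tilde x = 0$, so $0$ is an eigenvalue of $A_{G_H}$. If instead $H$ has a linearly dependent set of hyperedges $E$, then \Cref{I_H_lde} yields a non-zero $y:E(H)\to\mathbb{R}$ with $supp(y)\subseteq E$ and $I_H y = 0$, and again $\tilde y \ne 0$ satisfies $A_{G_H}\tilde y = 0$. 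In either case $A_{G_H}$ has a $0$-eigenvalue, which completes the proof.

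There is no real obstacle here; the only points needing (minor) care are the case split in the forward direction — one must observe that a non-zero $x$ forces at least one of its two restrictions to be non-zero, and the two cases land in different conclusions — and the verification that zero-extension preserves both non-triviality and membership in the relevant kernel, which is exactly the content of the embedding equivalences stated before the theorem.
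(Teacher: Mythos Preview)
Your proof is correct and follows essentially the same approach as the paper's own proof: both directions proceed via the block structure of $A_{G_H}$, split a putative null vector into its $V(H)$- and $E(H)$-restrictions, and then invoke \Cref{Ix=0_ldv} and \Cref{I_H_lde} (and, conversely, embed a coefficient vector via the zero-extension $\tilde{\,\cdot\,}$). The only difference is cosmetic---you spell out the supports $U=supp(x|_{_{V(H)}})$ and $E=supp(x|_{_{E(H)}})$ explicitly, whereas the paper leaves them implicit.
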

 \begin{proof}
        If  $A_{G_H}$ has a $0$ eigenvalue then there exists a non-zero vector $x:V(G_H)\to\mathbb{R}$ such that $A_{G_H}x=0$. Therefore, $I_Hx|_{_{E(H)}}=0$, and $I_H^Tx|_{_{V(H)}}=0$. Since $x$ is non-zero, at least one of  $x|_{_{E(H)}}$ and $x|_{_{V(H)}}$ is non-zero. Consequently, by \Cref{Ix=0_ldv}, and \Cref{I_H_lde}, there exists a linearly dependent set of vertices or a linearly dependent set of hyperedges.
        
        Conversely, if $U\subseteq V(H)$ is a linearly dependent set of vertices with a coefficient vector $c_U:V(H)\to\mathbb{R}$, then by \Cref{Ix=0_ldv}, $I_H^Tc_U=0$. Consequently, $A_{G_H}\tilde c_U=0$. Similarly, by \Cref{I_H_lde}, given any linearly dependent set of hyperedges with a coefficient function $c_E:E(H)\to\mathbb{R}$, we have $A_{G_H}\tilde c_E=0$.
    \end{proof}
    \begin{rem}\label{rem Ax=0}\rm
        As indicated in \Cref{0-ev-ld}, a $0$ eigenvalue of $A_{G_H}$ implies one of the following three specific situations. Corresponding eigenvectors can characterize the situations.
        \begin{itemize}\rm
          \item \textbf{Linearly dependent set of vertices}: If $A_{G_H}x=0$ with $supp(x)\subseteq V(H)$, then by the \Cref{Ix=0_ldv}, there exists a linearly dependent set of vertices $U$ in $H$, such that $supp(x)\subseteq U$.
            \item \textbf{Linearly dependent set of hyperedges:} If $A_{G_H}x=0$ with $supp(x)\subseteq E(H)$, then $I_Hx|_{_{E(H)}}=0$, and by \Cref{I_H_lde}, there exists a linearly dependent set of hyperedge $E$ with $supp(x)\subseteq E$.
            \item If $A_{G_H}x=0$ with $supp(x)\cap E(H)\ne\emptyset $, and $supp(x)\cap V(H)\ne\emptyset$, then $I_Hx|_{_{E(H)}}=0$, and $I_H^Tx|_{_{V(H)}}=0$. Thus, by \Cref{Ix=0_ldv}, and \Cref{I_H_lde}, this eigenvector indicate both a linearly dependent set of vertices $U$ and a linearly dependent set of hyperedges $E$ in the hypergraph $H$ with $supp(x|_{V(H)})\subseteq U$, and  $supp(x|_{E(H)})\subseteq U$.
        \end{itemize}
    \end{rem}
  
  In any bipartite graph $G$, If two partite sets are not equipotent, then $A_G$ is a singular matrix. This leads us to the following result.
\begin{cor}
\label{v=e}
    Let $H$ be a hypergraph. If $A_{G_H}$ is non-singular, then $|E(H)|=|V(H)|$.
\end{cor}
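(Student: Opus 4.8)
The plan is to prove the contrapositive: assuming $|V(H)|\ne|E(H)|$, I will produce a nonzero vector in the kernel of $A_{G_H}$, so that $A_{G_H}$ is singular. Write $n=|V(H)|$ and $m=|E(H)|$, and without loss of generality suppose $n>m$ (the case $m>n$ being symmetric, with the roles of vertices and hyperedges --- equivalently of $I_H$ and $I_H^T$ --- interchanged).

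First I would note that the rows of $I_H$ are the vectors $\{s_v:v\in V(H)\}\subseteq\mathbb{R}^{E(H)}$, and $\mathbb{R}^{E(H)}$ has dimension $m<n$. Hence these $n$ vectors cannot be linearly independent, so $V(H)$ (indeed any subset of size $m+1$) is a linearly dependent set of vertices in the sense of \Cref{lindepve}. This is just the pigeonhole-type observation referred to in the introduction.

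Next I would invoke \Cref{0-ev-ld}: since $H$ has a linearly dependent set of vertices, $A_{G_H}$ has a $0$-eigenvalue, hence is singular, contradicting the hypothesis. Concretely, by \Cref{Ix=0_ldv} there is a nonzero $x:V(H)\to\mathbb{R}$ with $I_H^Tx=0$, and then its embedding $\tilde x$ satisfies $A_{G_H}\tilde x=0$ by the block identity \eqref{adj-incident}. In the symmetric case $m>n$, the $m$ columns $\{\chi_e:e\in E(H)\}\subseteq\mathbb{R}^{V(H)}$ are linearly dependent for the same dimension-counting reason, yielding a linearly dependent set of hyperedges and, via \Cref{I_H_lde} and \Cref{0-ev-ld}, again a nonzero kernel vector of $A_{G_H}$.

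There is essentially no obstacle: the statement is an immediate packaging of \Cref{0-ev-ld} with the trivial fact that $k$ vectors in a space of dimension less than $k$ are linearly dependent. The only point needing a word of care is covering both $n>m$ and $m>n$, which is handled by the duality $H\leftrightarrow H^*$. One can also bypass the case split altogether using the rank bound $\operatorname{rank}(A_{G_H})\le 2\operatorname{rank}(I_H)\le 2\min(n,m)$, which is strictly less than $n+m$ whenever $n\ne m$, forcing $A_{G_H}$ to be singular unless $n=m$.
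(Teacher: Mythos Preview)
Your proof is correct and follows essentially the same route as the paper: prove the contrapositive by a case split on whether $n>m$ or $m>n$, use dimension counting to obtain a linearly dependent set of vertices (respectively hyperedges), and then invoke \Cref{0-ev-ld} to conclude $A_{G_H}$ is singular. The only cosmetic difference is that the paper appeals to \Cref{lin-ind-v=e} for the dimension bound whereas you argue it directly from the fact that $n$ vectors in an $m$-dimensional space are dependent when $n>m$; your direct argument is, if anything, slightly cleaner.
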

\begin{proof}
   To prove this result, it is enough to show if $|E(H)|\ne|V(H)|$, then $A_G$ is singular, that is, $\det(A_{G_H})=0$. 

If $|V(H)|<|E(H)|$, then by \Cref{lin-ind-v=e}, the number of linearly independent hyperedges in $H$ is at most $|V(H)|$. Thus, $E(H)$ is a linearly dependent collection of hyperedges.
If $|E(H)|<|V(H)|$, then similarly, we can show that $V(H)$ is a collection of linearly dependent vertices. Thus, by the \Cref{0-ev-ld}, $\det(A_{G_H})=0$. 
\end{proof}
It is natural to inquire about the converse of \Cref{v=e}. The following theorem establishes that the converse is not true in general. That is,  for a hypergraph $H$ with the same number of vertices and hyperedges, the matrix $A_{G_H}$ is not necessarily non-singular. However, when $|V(H)|=|E(H)|$, and further, if we can ensure that $I_H$ is non-singular, then the non-singularity of $A_{G_H}$ is guaranteed.

     \begin{prop}\label{A-I}
    Let $H$ be a hypergraph with $|V(H)|=|E(H)|$. The adjacency matrix $A_{G_H}$ is non-singular if and only if $I_H$ is non-singular.
\end{prop}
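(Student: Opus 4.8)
The plan is to pass from the statement about non-singularity to one about kernels and then exploit the block structure \eqref{adj-incident} together with the hypothesis $|V(H)|=|E(H)|$, which is exactly what makes $I_H$ a \emph{square} matrix. First I would recall the elementary observation already made just before \Cref{0-ev-ld}: for any $x:V(G_H)\to\mathbb{R}$ one has $A_{G_H}x=0$ if and only if $I_Hx|_{_{E(H)}}=0$ and $I_H^Tx|_{_{V(H)}}=0$. Hence $A_{G_H}$ is singular precisely when at least one of $I_H$, $I_H^T$ has a non-trivial kernel; and since both are $n\times n$ matrices with $n=|V(H)|=|E(H)|$, and a square matrix has the same rank as its transpose, this happens precisely when $I_H$ is singular. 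That already settles the equivalence, but I would spell out the two directions to make the role of the earlier propositions explicit.

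For ``$I_H$ non-singular $\Rightarrow A_{G_H}$ non-singular'': take $x$ with $A_{G_H}x=0$. Then $I_Hx|_{_{E(H)}}=0$, and non-singularity of $I_H$ forces $x|_{_{E(H)}}=0$; moreover $I_H$ square and non-singular implies $I_H^T$ is non-singular, so $I_H^Tx|_{_{V(H)}}=0$ forces $x|_{_{V(H)}}=0$, whence $x=0$. For the converse I would argue by contraposition: if $I_H$ is singular then, being square, it admits a non-zero $y:E(H)\to\mathbb{R}$ with $I_Hy=0$ (equivalently, by \Cref{I_H_lde}, $E(H)$ is a linearly dependent set of hyperedges), and then its embedding $\tilde y$ is a non-zero vector with $A_{G_H}\tilde y=0$, so $A_{G_H}$ is singular.

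I do not expect a serious obstacle here; the only point requiring care is that the hypothesis $|V(H)|=|E(H)|$ is essential and is used exactly at the step ``square matrix and its transpose have equal nullity'' --- without it the phrase ``$I_H$ non-singular'' is not even meaningful, and \Cref{v=e} already shows the two sides behave very differently when $n\ne m$. As an alternative, purely determinantal route I would note the block-permutation identity
\[
\det A_{G_H}=(-1)^{n}\det(I_H^T)\det(I_H)=(-1)^{n}\bigl(\det I_H\bigr)^2,
\]
obtained by swapping the two row blocks of \eqref{adj-incident} (a product of $n$ transpositions), from which $\det A_{G_H}\neq 0\iff\det I_H\neq 0$ is immediate; I would likely include this as a closing remark since it yields the sharper numerical relation between the two determinants.
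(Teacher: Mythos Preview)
Your proof is correct and follows essentially the same route as the paper: both arguments reduce non-singularity of $A_{G_H}$ to triviality of the kernels of $I_H$ and $I_H^T$ via the block structure \eqref{adj-incident}, and then use that for the square matrix $I_H$ these two conditions coincide---the paper phrases this through \Cref{Ix=0_ldv}, \Cref{I_H_lde} and \Cref{0-ev-ld}, whereas you work with the kernel description directly. Your determinantal remark $\det A_{G_H}=(-1)^n(\det I_H)^2$ is a pleasant extra not in the paper and gives the sharper quantitative relation.
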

\begin{proof}
If $A_{G_H}$ is non-singular, then by \Cref{0-ev-ld},  the collection $E(H)$ of all the hyperedges in $H$ is linearly independent, Thus, by the \Cref{I_H_lde}, $I_Hy\ne0$ for all non-zero vector $y:E(H)\to\mathbb{R}$. That is the square matrix $I_H$ is non-singular. Conversely, if the square matrix $I_H$ is non-singular, then its transpose $I_H^T$ is also non-singular. Therefore, by the \Cref{I_H_lde}, and the \Cref{Ix=0_ldv} ,  all the vertices  are linearly independent and all the hyperedges are linearly independent. Therefore, by \Cref{0-ev-ld}, $A_{G_H}$ is non-singular.


\end{proof}

If $|V(H)|\ne |E(H)|$, the matrix $A_{G_H}$ becomes singular, leading to $0$ is an eigenvalue of $A_{G_H}$. Subsequently, we aim to demonstrate that the multiplicity of the eigenvalue $0$ is at least $||V(H)| - |E(H)||$.

\begin{prop}\label{v-e-multiplicity}
    Let $H$ be a hypergraph. If $||V(H)|-|E(H)||\ne 0$, then $0$ is an eigenvalue of $A_{G_H}$ with multiplicity at least $||V(H)|-|E(H)||$.
\end{prop}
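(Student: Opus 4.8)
The plan is to combine the block structure \Cref{adj-incident} of $A_{G_H}$ with the rank--nullity theorem. Write $n=|V(H)|$ and $m=|E(H)|$, and let $r$ be the rank of the incidence matrix $I_H$; by \Cref{lin-ind-v=e} this $r$ is simultaneously the largest size of a linearly independent set of vertices and of a linearly independent set of hyperedges, and since $I_H$ is $n\times m$ we have $r\le\min(n,m)$.

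First I would record, using the identities established just before \Cref{0-ev-ld}, that a vector $x:V(G_H)\to\mathbb{R}$ satisfies $A_{G_H}x=0$ if and only if $I_H\,x|_{_{E(H)}}=0$ and $I_H^T\,x|_{_{V(H)}}=0$; equivalently, $x=\tilde z+\tilde y$ where $z:=x|_{_{V(H)}}$ lies in the null space of $I_H^T$ and $y:=x|_{_{E(H)}}$ lies in the null space of $I_H$. Because the embeddings $z\mapsto\tilde z$ and $y\mapsto\tilde y$ are injective linear maps whose images are supported on the disjoint sets $V(H)$ and $E(H)$, this decomposition is a genuine direct sum, so the dimension of the null space of $A_{G_H}$ equals the sum of the dimensions of the null spaces of $I_H^T$ and of $I_H$.

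Next I would apply rank--nullity to each block. Viewing $I_H^T$ as a linear map $\mathbb{R}^{V(H)}\to\mathbb{R}^{E(H)}$ of rank $r$ gives a null space of dimension $n-r$, and viewing $I_H$ as a map $\mathbb{R}^{E(H)}\to\mathbb{R}^{V(H)}$ of rank $r$ gives a null space of dimension $m-r$. Hence the multiplicity of the eigenvalue $0$ of $A_{G_H}$ is $(n-r)+(m-r)=n+m-2r\ge n+m-2\min(n,m)=\bigl||V(H)|-|E(H)|\bigr|$, which is the desired bound. (Alternatively, one avoids rank--nullity entirely: if, say, $n>m$, extend a maximal linearly independent set of $r\le m$ vertices one vertex at a time; each of the remaining $n-r\ge n-m$ vertices yields, via \Cref{Ix=0_ldv}, a coefficient vector in the null space of $I_H^T$, and these can be chosen with nested supports, hence linearly independent, producing an $(n-m)$-dimensional subspace of $0$-eigenvectors of $A_{G_H}$.)

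There is no real obstacle here; the only step needing a sentence of justification is the direct-sum decomposition of the null space of $A_{G_H}$ into the null spaces of $I_H$ and $I_H^T$, that is, the fact that the embedded eigenvectors coming from the two blocks are jointly linearly independent and together exhaust the null space — and this is immediate from the disjointness of their supports in $V(G_H)=V(H)\cup E(H)$. The computation is symmetric under interchanging $V(H)$ with $E(H)$ (equivalently, replacing $H$ by its dual $H^*$), so the case $|E(H)|>|V(H)|$ requires nothing new.
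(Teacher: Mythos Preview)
Your proof is correct, and your main argument via rank--nullity is a cleaner route than the paper's. The paper proceeds constructively: in the case $m>n$, it fixes a maximal linearly independent set $E\subseteq E(H)$ of hyperedges, then for each of at least $m-n$ hyperedges $e_i\notin E$ writes $\chi_{e_i}$ as a combination of $\{\chi_e:e\in E\}$ and packages the coefficients into a vector $y_i$ with $y_i(e_i)=-1$ and $y_i(e_j)=0$ for $j\ne i$; linear independence of the $y_i$ is then read off at the coordinates $e_i$, and the embedded $\tilde y_i$ give the required null vectors of $A_{G_H}$. Your parenthetical alternative is exactly this argument. Your primary approach, by contrast, identifies the null space of $A_{G_H}$ with $\ker(I_H^T)\oplus\ker(I_H)$ and invokes rank--nullity to get the exact nullity $n+m-2r$, from which the bound $\ge|n-m|$ drops out via $r\le\min(n,m)$. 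This buys you a sharper intermediate statement (the precise nullity, not just a lower bound) and avoids any explicit construction; the paper's approach buys explicit eigenvectors, which fits its running theme of exhibiting concrete coefficient vectors for linearly dependent sets.
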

\begin{proof}   Let $H$ be a hypergraph with  $|V(H)|=n$, and $|E(H)|=m$.  
Given that $||V(H)|-|E(H)||\ne 0$, it implies that either $|E(H)|>|V(H)|$ or $|V(H)|>|E(H)|$. In the case where $|E(H)|>|V(H)|$, by \Cref{lin-ind-v=e}, there can be at most $n$ linearly independent hyperedges in $H$. Thus, given any maximal independent collection of hyperedges  $E\subseteq E(H)$ in $H$, there exists at least $m-n$ hyperedges, $e_1,\ldots,e_{m-n}\in E(H)\setminus E$. Since $E$ is a maximal collection of linearly independent hyperedges, for all $i=1,\ldots,m-n$ we have $\chi_{e_i}=\sum\limits_{e\in E}c^i_e\chi_{e}$  where $c_e^i\in\mathbb{R}$ for each $e\in E$. Thus, for all $i=1,\ldots,m-n$, we have $y_i:E(H)\to \mathbb{R}$ such that $y_i(e)=c^i_e$ for all $e\in E$,  $y_i(e_i)=-1$, and $y_i(e)=0$ for all $e(\ne e_i)\in E(H)\setminus E$.  Therefore, by the \Cref{I_H_lde}, $I_Hy_i=0$ for all $i=1,\ldots,m-n$. If $c_1y_1+\ldots+c_{m-n}y_{m-n}=0$ for $c_1,\ldots,c_{n-m}\in\mathbb{R}$, then $c_i=(c_1y_1+\ldots+c_{m-n}y_{m-n})(e_i)=0$ for all $i=1,\ldots,m-n$. Therefore, $y_1,\ldots,y_{m-n}$ are linearly independent vectors with $A_{G_H}\tilde y_i=0$ for all $i=1,\ldots,m-n$. Therefore, $0$ is an eigenvalue of $A_{G_H}$ with multiplicity at least $m-n=||V(H)|-|E(H)||$.
Similarly,  if $|V(H)|> |E(H)|$, then the number of linearly independent vertices in H can be at most $m$. Thus, given any maximal linearly independent collection of vertices $U$ such that there exist at least $m-n$ vertices in $v_1,\ldots,v_{n-m}\in V(H)\setminus U$.  Now proceeding similar to the previous case, these $n-m$ vertices lead us to the collection of linearly independent vectors $\{z_i:V(H)\to\mathbb{R}:i=1,\ldots,n-m\}$ such that  $A_{G_H}\tilde z_i=0$. Therefore, $0$ is an eigenvalue of $A_{G_H}$ with multiplicity at least $n-m=||V(H)|-|E(H)||$.

\end{proof}

The \Cref{0-ev-ld} leads us to the following Theorem for hypergraph with $|V(H)|=|E(H)|$.
\begin{thm}
    Let $H$ be a hypergraph with $|V(H)|=|E(H)|$. The following are equivalent.

\begin{enumerate}
\item The set of vertices $V(H)$ is a linearly independent set of vertices.
\item The set of hyperedges $E(H)$ is a linearly independent set of hyperedges.
\item The matrix $A_{G_H}$ is non-singular.
\end{enumerate}
\end{thm}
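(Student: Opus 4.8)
The plan is to observe that, once we write $n = |V(H)| = |E(H)|$, the incidence matrix $I_H$ is a square $n\times n$ matrix, and all three conditions are merely different names for the statement ``$I_H$ is non-singular''. Indeed, the rows $\{s_v : v\in V(H)\}$ form $n$ vectors in $\mathbb{R}^{E(H)}\cong\mathbb{R}^n$, so condition (1) says these $n$ vectors are linearly independent, i.e.\ $I_H$ has rank $n$; the columns $\{\chi_e : e\in E(H)\}$ form $n$ vectors in $\mathbb{R}^{V(H)}\cong\mathbb{R}^n$, so condition (2) also says $I_H$ has rank $n$; and \Cref{A-I} already identifies the non-singularity of $I_H$ with condition (3). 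So at bottom the theorem is the equality of row rank and column rank (packaged in \Cref{lin-ind-v=e}) combined with \Cref{A-I}.

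Concretely I would set up a cycle of implications. First, (1) $\Rightarrow$ (2): if $V(H)$ is linearly independent then $H$ has $n$ linearly independent vertices, hence by \Cref{lin-ind-v=e} it has $n$ linearly independent hyperedges; since $|E(H)| = n$, a linearly independent sub-collection of size $n$ must be all of $E(H)$, so $E(H)$ is linearly independent. Next, (2) $\Rightarrow$ (3): from (2), running the previous step in reverse (again via \Cref{lin-ind-v=e}) gives that $V(H)$ is linearly independent too, so $H$ possesses neither a linearly dependent set of vertices nor a linearly dependent set of hyperedges; by \Cref{0-ev-ld}, $0$ is then not an eigenvalue of $A_{G_H}$, i.e.\ $A_{G_H}$ is non-singular. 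Finally, (3) $\Rightarrow$ (1): if $A_{G_H}$ is non-singular then by \Cref{0-ev-ld} $H$ has no linearly dependent set of vertices, so $V(H)$ is linearly independent. This closes the loop (1) $\Rightarrow$ (2) $\Rightarrow$ (3) $\Rightarrow$ (1) and yields the equivalence of all three. One could equally well cite \Cref{A-I} to get (2) $\Leftrightarrow$ (3) directly, bypassing the appeal to \Cref{0-ev-ld}.

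I do not expect any serious obstacle: the statement is essentially a bookkeeping corollary of \Cref{lin-ind-v=e}, \Cref{0-ev-ld} and \Cref{A-I}. The one place to be slightly careful is the counting step — that having $n$ linearly independent hyperedges when there are exactly $n$ hyperedges in total forces the whole set $E(H)$ to be linearly independent (and symmetrically for vertices) — and making sure the hypothesis $|V(H)| = |E(H)|$ is genuinely used there, since without it none of the three conditions need be equivalent (cf.\ \Cref{v=e} and \Cref{v-e-multiplicity}).
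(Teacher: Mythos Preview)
Your proposal is correct and follows essentially the same cycle of implications as the paper, drawing on \Cref{lin-ind-v=e} and \Cref{0-ev-ld} in the same way. The only cosmetic difference is in the step (2) $\Rightarrow$ (3): the paper goes via \Cref{I_H_lde} (to get $I_H$ non-singular) and then \Cref{A-I}, whereas you first recover (1) via \Cref{lin-ind-v=e} and then apply \Cref{0-ev-ld} --- but you already note this alternative yourself, and both routes are equally short.
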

 \begin{proof}
 \textbf{ Proof of 1. implies 2.:} Since $V(H)$ is linearly independent and $|V(H)|=|E(H)|$, by the \Cref{lin-ind-v=e}, $E(H)$ is linearly independent.    
    
    \textbf{ Proof of 2. implies 3.:} Since $E(H)$ is linearly independent, by \Cref{I_H_lde}, the square matrix $I_H$ is non-singular. Therefore, by \Cref{A-I} $A_{G_H}$, is non-singular.

 \textbf{ Proof of 3. implies 1.:} Since the matrix $A_{G_H}$ is non-singular, the \Cref{0-ev-ld} implies the vertex set $V(H)$ is linearly independent.   
\end{proof}
 The above result indicates that in a hypergraph $H$ with $|V(H)|=|E(H)|$, the zero eigenvalues of $A_{G_H}$ indicate the existence of both the linearly dependent hyperedges and vertices. We illustrate this in the following example.
 \begin{exm}\rm
     1. Consider the hypergraph $H$ with $V(H)=\{1,2,3,4,5\}$, and $E(H)=\{e_1,e_2,e_3,e_4,e_5\}$ with $e_1=\{1,2,5\},e_2=\{2,3,5\},e_3=\{3,4,5\},e_4=\{1,4,5\},e_5=\{1,2\}$. Since $\chi_{e_1}-\chi_{e_2}+\chi_{e_3}=\chi_{e_4}$, $0$ is an eigenvalue of $A_{G_H}$. As, the \Cref{0-ev-ld} suggest, $s_1+s_3=s_2+s_4$, and thus $\{1,2,3,4\}$ is a linearly dependent set of vertices in $H$.

     2. Let $H$ be a hypergraph with $V(H)=\{1,2,3\ldots,n\}$, and $E(H)=\{e_1,e_2,\ldots,e_n\}$ is such that 
     $e_1=\{1\}$, $i\in e_i$, and $ e_i\subseteq \{1,2,\ldots,i\}$ for all $i=2,3,\ldots,n$. If we arrange the rows and columns of $I_H$ so that the $i$-th row and the $i$-th column of $I_H$ are indexed by the vertex $i$ and the hyperedge $e_i$ respectively, then $I_H$ is a lower-triangular matrix. Since $i\in e_i$ for all $i=1,2,\ldots,n$,  all the diagonal entries of the upper-triangular matrix $I_H$ are $1$. Therefore, $\det{I_H}=1$. Thus, $I_H$ is non-singular, that is, both $V(H)$ and $E(H)$ are linearly independent.

     3. Let $H$ be a hypergraph with $V(H)=\{1,2,\ldots,n\}$, and $E(H)=\{e_1,e_2,\ldots,e_n\}$ where $e_i=V(H)\setminus\{i\}$.  If we arrange the rows and columns of $I_H$ so that the $i$-th row and the $i$-th column of $I_H$ are indexed by the vertex $i$ and the hyperedge $e_i$ respectively, then all the non-diagonal entries of $I_H$ are $1$, and the diagonal entries are $0$. Thus, $I_H$ is a circulant matrix. Therefore,
     $$\det(I_H)=\prod\limits_{w:w^n=1}\sum\limits_{i=1}^{n-1}w^i=(-1)^{n-1}(n-1).$$ Consequently, $A_{G_H}$ is a non-singular matrix and the vertices and hyperedges of $H$ are linearly independent.
 \end{exm}
Some specific linearly dependent set of vertices and hyperedges corresponds to symmetry and redundancy in the structure of a hypergraph. Thus, our exploration in the subsequent section delves into the structural symmetry and redundancy corresponding to the nullspace of $A_{G_H}$.
\subsection{Some hypergraph substructures due to linearly dependent vertices and hyperedges.}\label{unit-parpart}
Thus far, our investigation has delved into the concepts of linearly dependent vertices and hyperedges as elucidated by the incident matrix. Presently, we focus on certain hypergraph substructures that lead to linearly dependent vertices and hyperedges. As the \Cref{0-ev-ld} suggest, we show that these hypergraph substructures are characterized by specific vectors in the null space of $A_{G_H}$. 
\subsubsection{Units in a hypergraph}\label{unit-a-g-h}
In a hypergraph $H$, units are the maximal collections of vertices with the same stars.
\begin{df}[Unit]\cite[Definition 3.1]{unit}
    Let $H$ be a hypergraph. Consider the equivalence relation \[\mathcal{R}_u(H) = \{(u, v) \in V(H) \times V(H) : E_u(H) = E_v(H)\}.\] Each equivalence class under \(\mathcal{R}_u(H)\) is referred to as a \emph{unit}. For every unit \(W_E (\subseteq V(H))\), there exists a corresponding collection \(E \subseteq E(H)\) such that \(E_v(H)=E\) for all \(v \in W_E\). This collection $E$ is called the \emph{generator} of the unit $W_E$.
\end{df}
 We denote the complete collection of units in $H$ as $\mathfrak{U}(H)$. Given a hypergraph $H$, the \emph{unit-contraction} of $H$ is a hypergraph $H/\mathcal{R}_u(H)$ with \[V(H/\mathcal{R}_u(H))=\mathfrak{U}(H),\]
and \[E(H/\mathcal{R}_u(H))=\{\Tilde{e}=\{W_{E_v(H)}:v\in e\}: e\in E(H)\}.\] 
For any $e\in E(H)$, $\Tilde{e}$ is a set, and if $E_u(H)=E=E_v(H)$ for two $u,v\in e$, then $\Tilde{e}$ contain the unit $W_E$ containing $u,v$. However, to avoid possible confusion, it is important to clarify that being a set, $\Tilde{e}$ contains $W_E$ just once and does not contain two distinct instances of $W_E$ for both $u$ and $v$ individually. 
\begin{exm}\label{ex-unit}\rm
    \begin{figure}[H]
        \centering
        \begin{subfigure}{0.49\textwidth}
        \begin{tikzpicture}[scale=0.65]
		\node [style=none] (11) at (4, 5) {};
		\node [style=none] (12) at (6, 4.25) {};
		\node [style=none] (13) at (5.5, 2.25) {};
		\node [style=none] (14) at (0.25, 0.25) {};
		\node [style=none] (15) at (2.25, -2.25) {};
		\node [style=none] (16) at (3.5, -2) {};
		\node [style=none] (17) at (-1.75, 4.25) {};
		\node [style=none] (18) at (-0.25, 6.5) {};
		\node [style=none] (19) at (1.75, 4.75) {};
		\node [style=none] (20) at (0.25, 0.5) {};
		\node [style=none] (21) at (3, -1.25) {};
		\node [style=none] (22) at (1.75, 1.75) {};
		\node [style=none] (24) at (-4.75, 2.75) {};
		\node [style=none] (25) at (-3.75, 4) {};
		\node [style=none] (26) at (-2.25, 2.25) {};
		\node [style=none] (27) at (-4, 0.75) {};
		\node [style=none] (28) at (-3, -0.25) {};
		\node [style=none] (29) at (0.75, 3.5) {};
		\node [style=none] (30) at (-0.25, 2.25) {};
		\node [style=none] (31) at (0.75, 5) {};
		\node [style=none] (32) at (5, 4) {};
		\node [style=none] (33) at (2.75, 1.75) {};
		\node [style=none] (34) at (3.75, 0.75) {};
		\node [style=none] (35) at (4.75, 1.75) {};
		\node [style=none] (36) at (-2.25, 5.75) {};
		\node [style=none] (37) at (-0.75, 6.5) {};
		\node [style=none] (38) at (0.25, 5.5) {};
		\node [style=none] (39) at (-4.25, 2.5) {};
		\node [style=none] (40) at (-2.75, 0.25) {};
		\node [style=none] (41) at (-1.5, 1) {};
		\node [style=none] (42) at (0.5, 0) {};
		\node [style=none] (43) at (1.75, 1.5) {};
		\node [style=none] (44) at (3, -1.25) {};
		\node [style=none] (45) at (3, 2.75) {};
		\node [style=none] (46) at (4.25, 4.25) {};
		\node [style=none] (47) at (3.75, 1) {};
		\node [style=none] (48) at (-4, 2) {};
		\node [style=none] (49) at (-3.25, 3.25) {};
		\node [style=none] (50) at (-2, 0.5) {};
		\node [style=none] (51) at (-1.5, 5.5) {};
		\node [style=none] (52) at (-0.25, 6.25) {};
		\node [style=none] (53) at (-1, 3.75) {};
		\node [style=none] (56) at (0.25, 3.25) {};
		\node [style=none] (57) at (0, 2.25) {};
		\node [style=none] (58) at (1.25, 5) {};
		\node [style=none] (59) at (1, 3.25) {};
		\node [style=none] (67) at (0.75, 2.25) {};
		\node [style=none] (68) at (1, -1.25) {};
		\node [style=none] (69) at (3.75, 2.75) {};
		\node [style=none] (70) at (2, 4.25) {};
		\node [style=none] (71) at (1, 2.75) {};
		\draw (14.center)
			 to [in=-150, out=45, looseness=0.75] (11.center)
			 to [bend left=45] (12.center)
			 to [bend left=45] (13.center)
			 to [in=60, out=-165, looseness=1.50] (16.center)
			 to [bend left=45] (15.center)
			 to [bend left=45] cycle;
		\draw (20.center)
			 to [in=-30, out=135] (17.center)
			 to [bend left=45] (18.center)
			 to [bend left=45] (19.center)
			 to [in=135, out=-75, looseness=0.75] (22.center)
			 to [bend left=45] (21.center)
			 to [bend left=45] cycle;
		\draw (27.center)
			 to (24.center)
			 to [bend left=45] (25.center)
			 to [in=165, out=0, looseness=1.25] (26.center)
			 to [in=180, out=0] (29.center)
			 to [bend left=45] (28.center)
			 to [bend left=45] cycle;
		\draw (33.center)
			 to [in=-45, out=165, looseness=1.25] (30.center)
			 to [bend left=45] (31.center)
			 to [bend left=45] (32.center)
			 to [in=135, out=-75, looseness=1.25] (35.center)
			 to [bend left=45] (34.center)
			 to [bend left=45] cycle;
		\draw (39.center)
			 to [in=-90, out=60] (36.center)
			 to [bend left=45] (37.center)
			 to [bend left=45] (38.center)
			 to [in=90, out=-135] (41.center)
			 to [bend left=45] (40.center)
			 to [bend left=45] cycle;
		\draw [style=new edge style 1,fill=gray!30!white] (42.center)
			 to [bend right=45] (44.center)
			 to [in=-30, out=90] (43.center)
			 to [bend right=45, looseness=0.50] cycle;
		\draw [style=new edge style 1,fill=gray!30!white] (45.center)
			 to [bend right=45] (47.center)
			 to [in=-30, out=90] (46.center)
			 to [bend right=45, looseness=0.50] cycle;
		\draw [style=new edge style 1,fill=gray!30!white] (48.center)
			 to [bend right=45] (50.center)
			 to [in=-75, out=90] (49.center)
			 to [bend right=45, looseness=0.50] cycle;
		\draw [style=new edge style 1,fill=gray!30!white] (51.center)
			 to [bend right=45] (53.center)
			 to [in=-30, out=90] (52.center)
			 to [bend right=45, looseness=0.50] cycle;
		\draw [style=new edge style 1,fill=gray!30!white] (57.center)
			 to [bend left=105, looseness=2.00] (56.center)
			 to [bend left=105, looseness=2.00] cycle;
		\draw [style=new edge style 1,fill=gray!30!white] (59.center)
			 to [bend left=105, looseness=2.00] (58.center)
			 to [in=-15, out=0] cycle;
    	\node [style=new style 0] (0) at (-0.25, 5.75) {\tiny 1};
		\node [style=new style 0] (1) at (-1.25, 4.5) {\tiny 2};
		\node [style=new style 0] (2) at (1, 4) {\tiny  11};
		\node [style=new style 0] (3) at (-3.5, 2.5) {\tiny  3};
		\node [style=new style 0] (4) at (-2.5, 0.75) {\tiny  4};
		\node [style=new style 0] (5) at (1.75, 1) {\tiny  7};
		\node [style=new style 0] (6) at (1, 0) {\tiny  5};
		\node [style=new style 0] (7) at (2.5, 0) {\tiny 6};
		\node [style=new style 0] (8) at (4.25, 3.75) {\tiny  9};
		\node [style=new style 0] (9) at (3.5, 1.5) {\tiny 8};
            \node [style=new style 0] (23) at (0, 2.75) {\tiny  10};
            	\node [style=none] (60) at (1, 6.75) {\tiny $e_1$};
		\node [style=none] (61) at (-2.75, 5.5) {\tiny  $e_2$};
		\node [style=none] (62) at (-3.75, -0.5) {\tiny $e_3$};
		\node [style=none] (63) at (0, -1) {\tiny $e_4$};
		\node [style=none] (64) at (3, 5.75) {\tiny  $e_5$};
		\node [style=none] (65) at (-0.75, 5.25) {\tiny  $W_{E_1}$};
		\node [style=none] (66) at (-3, 1.5) {\tiny  $W_{E_2}$};
  \node [style=none] (72) at (2, -0.75) {\tiny  $W_{E_3}$};
		\node [style=none] (73) at (3.75, 2.75) {\tiny  $W_{E_4}$};
		\node [style=none] (74) at (1, 4.75) {\tiny  $W_{E_5}$};
		\node [style=none] (75) at (1.05, 2.5) {\tiny  $W_{E_6}$};
		\node [style=none] (76) at (0.25, -3.75) {$H$};
\end{tikzpicture}
         \caption{A hypergraph $H$ wherein units are identified within the shaded regions.}
         \label{fig:hyp-unit}
        \end{subfigure}
        \begin{subfigure}{0.5\textwidth}
        \begin{tikzpicture}[scale=0.6]
		\node [style=none] (11) at (4, 5) {};
		\node [style=none] (12) at (6, 4.25) {};
		\node [style=none] (13) at (5.5, 2.25) {};
		\node [style=none] (14) at (0.25, 0.25) {};
		\node [style=none] (15) at (2.25, -2.25) {};
		\node [style=none] (16) at (3.5, -2) {};
		\node [style=none] (17) at (-1.75, 4.25) {};
		\node [style=none] (18) at (-0.25, 6.5) {};
		\node [style=none] (19) at (1.75, 4.75) {};
		\node [style=none] (20) at (0.25, 0.5) {};
		\node [style=none] (21) at (3, -1.25) {};
		\node [style=none] (22) at (1.75, 1.75) {};
		\node [style=none] (24) at (-4.75, 2.75) {};
		\node [style=none] (25) at (-3.75, 4) {};
		\node [style=none] (26) at (-2.25, 2.25) {};
		\node [style=none] (27) at (-4, 0.75) {};
		\node [style=none] (28) at (-3, -0.25) {};
		\node [style=none] (29) at (0.75, 3.5) {};
		\node [style=none] (30) at (-0.25, 2.25) {};
		\node [style=none] (31) at (0.75, 5) {};
		\node [style=none] (32) at (5, 4) {};
		\node [style=none] (33) at (2.75, 1.75) {};
		\node [style=none] (34) at (3.75, 0.75) {};
		\node [style=none] (35) at (4.75, 1.75) {};
		\node [style=none] (36) at (-2.25, 5.75) {};
		\node [style=none] (37) at (-0.75, 6.5) {};
		\node [style=none] (38) at (0.25, 5.5) {};
		\node [style=none] (39) at (-4.25, 2.5) {};
		\node [style=none] (40) at (-2.75, 0.25) {};
		\node [style=none] (41) at (-1.5, 1) {};
		\node [style=none] (42) at (0.5, 0) {};
		\node [style=none] (43) at (1.75, 1.5) {};
		\node [style=none] (44) at (3, -1.25) {};
		\node [style=none] (45) at (3, 2.75) {};
		\node [style=none] (46) at (4.25, 4.25) {};
		\node [style=none] (47) at (3.75, 1) {};
		\node [style=none] (48) at (-4, 2) {};
		\node [style=none] (49) at (-3.25, 3.25) {};
		\node [style=none] (50) at (-2, 0.5) {};
		\node [style=none] (51) at (-1.5, 5.5) {};
		\node [style=none] (52) at (-0.25, 6.25) {};
		\node [style=none] (53) at (-1, 3.75) {};
		\node [style=none] (56) at (0.25, 3.25) {};
		\node [style=none] (57) at (0, 2.25) {};
		\node [style=none] (58) at (1.25, 5) {};
		\node [style=none] (59) at (1, 3.25) {};
		\node [style=none] (67) at (0.75, 2.25) {};
		\node [style=none] (68) at (1, -1.25) {};
		\node [style=none] (69) at (3.75, 2.75) {};
		\node [style=none] (70) at (2, 4.25) {};
		\node [style=none] (71) at (1, 2.75) {};
		\draw (14.center)
			 to [in=-150, out=45, looseness=0.75] (11.center)
			 to [bend left=45] (12.center)
			 to [bend left=45] (13.center)
			 to [in=60, out=-165, looseness=1.50] (16.center)
			 to [bend left=45] (15.center)
			 to [bend left=45] cycle;
		\draw (20.center)
			 to [in=-30, out=135] (17.center)
			 to [bend left=45] (18.center)
			 to [bend left=45] (19.center)
			 to [in=135, out=-75, looseness=0.75] (22.center)
			 to [bend left=45] (21.center)
			 to [bend left=45] cycle;
		\draw (27.center)
			 to (24.center)
			 to [bend left=45] (25.center)
			 to [in=165, out=0, looseness=1.25] (26.center)
			 to [in=180, out=0] (29.center)
			 to [bend left=45] (28.center)
			 to [bend left=45] cycle;
		\draw (33.center)
			 to [in=-45, out=165, looseness=1.25] (30.center)
			 to [bend left=45] (31.center)
			 to [bend left=45] (32.center)
			 to [in=135, out=-75, looseness=1.25] (35.center)
			 to [bend left=45] (34.center)
			 to [bend left=45] cycle;
		\draw (39.center)
			 to [in=-90, out=60] (36.center)
			 to [bend left=45] (37.center)
			 to [bend left=45] (38.center)
			 to [in=90, out=-135] (41.center)
			 to [bend left=45] (40.center)
			 to [bend left=45] cycle;
            \node [rectangle,draw] (23) at (0, 2.75) {};
            	\node [style=none] (60) at (1, 6.75) {\tiny $\tilde e_1$};
		\node [style=none] (61) at (-2.75, 5.5) {\tiny  $\tilde e_2$};
		\node [style=none] (62) at (-3.75, -0.5) {\tiny $\tilde e_3$};
		\node [style=none] (63) at (0, -1) {\tiny $\tilde e_4$};
		\node [style=none] (64) at (3, 5.75) {\tiny  $\tilde e_5$};
		\node [rectangle,draw] (65) at (-0.75, 5.25) {\tiny  $W_{E_1}$};
		\node [rectangle,draw] (66) at (-3, 1.5) {\tiny  $W_{E_2}$};
  \node [rectangle,draw] (72) at (2, -0.5) {\tiny  $W_{E_3}$};
		\node [rectangle,draw] (73) at (3.75, 2.75) {  \tiny$W_{E_4}$};
		\node [rectangle,draw] (74) at (1, 4.5) {\tiny  $W_{E_5}$};
		\node [none] (75) at (1.05, 2.5) {\tiny  $W_{E_6}$};
		\node [style=none] (76) at (0.25, -3.75) {$H/\mathcal{R}_u(H)$};
\end{tikzpicture}
         \caption{Units of $H$ become vertices in $H/\mathcal{R}_u(H)$, the unit-contraction of $H$. 
         }
         \label{fig:unit-contraction}
        \end{subfigure}
        \caption{Units in a hypergraphs and unit contraction}
        \label{fig:unit and contraction}
    \end{figure}
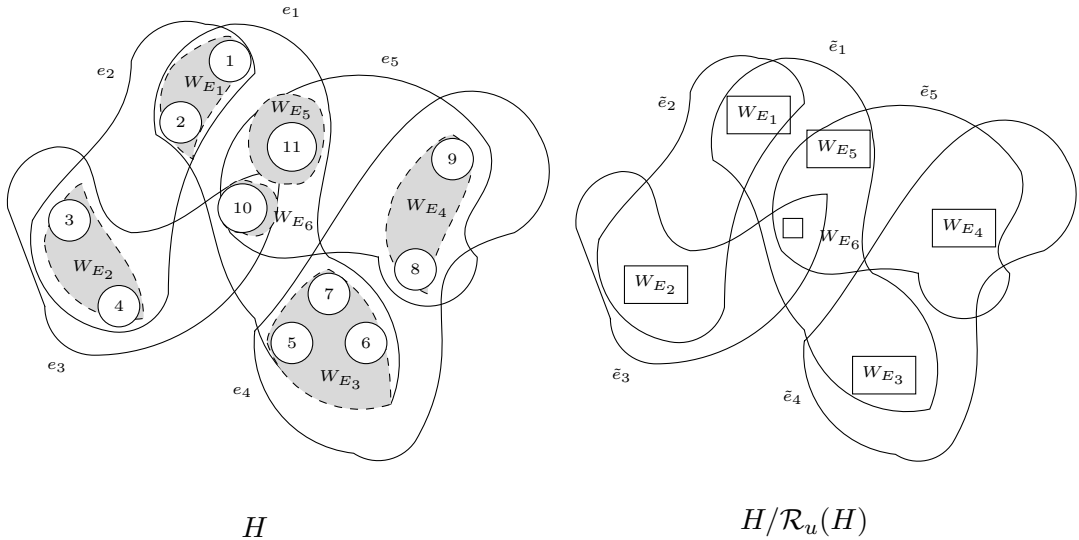
   Consider the hypergraph $H$  with $V(H)=\{1,2,\ldots,10,11\}$, and $E(H)=\{e_1,e_2,e_3,e_4,e_5\}$ (see \Cref{fig:hyp-unit}), where $ e_1=\{1,2,5,6,7,10,11,\}$, $e_2=\{1,2,3,4\}, e_3=\{3,4,10\}$,
       $e_4=\{5,6,7,8,9\}$, $e_5=\{8,9,10,11\}$. The units in $H$ are$ W_{E_1}=\{1,2\}$, $W_{E_2}=\{3,4\}$, $W_{E_3}=\{5,6,7\}$, $W_{E_4}=\{8,9\}$, $W_{E_5}=\{11\}$, $W_{E_6}=\{10\}$. The corresponding generating sets are $E_1=\{e_1,e_2\}$, $E_2=\{e_2,e_3\}$, $E_3=\{e_1,e_4\}$, $E_4=\{e_4,e_5\}$, $E_5=\{e_1,e_5\}$, $E_6=\{e_1,e_3,e_5\}$.
\end{exm}
A hypergraph $H$ is called \emph{non-contractible} if each unit is a singleton set. If $H$ is non-contractible then $H$ is isomorphic to $H/\mathcal{R}_u(H)$. 
\begin{prop}
    Let $H$ be a hypergraph. If $A_{G_H}$ is non-singular, then $H$ is non-contractible.
\end{prop}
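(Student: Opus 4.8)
The plan is to prove the contrapositive: assuming $H$ is contractible, I will exhibit a nonzero vector in the nullspace of $A_{G_H}$, forcing $\det(A_{G_H})=0$. So suppose some unit $W\in\mathfrak{U}(H)$ is not a singleton, and pick two distinct vertices $u,v\in W$. By the definition of a unit, $E_u(H)=E_v(H)$, hence the corresponding rows of $I_H$ coincide, i.e.\ $s_u=s_v$. Equivalently, setting $x=\chi_{\{u\}}-\chi_{\{v\}}:V(H)\to\mathbb{R}$, we have $I_H^T x = s_u-s_v=0$, and $x$ is nonzero precisely because $u\ne v$.

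From here I would simply invoke the machinery already established. Since $\mathrm{supp}(x)\subseteq\{u,v\}$ and $I_H^T x=0$, Proposition~\ref{Ix=0_ldv} shows that $\{u,v\}$ is a linearly dependent set of vertices in $H$. Then Theorem~\ref{0-ev-ld} (direction ``linearly dependent set of vertices $\Rightarrow$ $0$ is an eigenvalue'') gives that $A_{G_H}$ has $0$ as an eigenvalue; concretely, $A_{G_H}\tilde x=0$ with $\tilde x\ne 0$, so $A_{G_H}$ is singular. Taking the contrapositive yields the claim: if $A_{G_H}$ is non-singular then every unit of $H$ is a singleton, i.e.\ $H$ is non-contractible.

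The argument is essentially a one-line corollary of the earlier results, so there is no substantial obstacle; the only point requiring a moment's care is confirming that the witness vector $\chi_{\{u\}}-\chi_{\{v\}}$ is genuinely nonzero, which is exactly where the hypothesis $|W|\ge 2$ (equivalently, $u\ne v$) is used. One could alternatively phrase the whole thing directly through Remark~\ref{rem Ax=0} or through the observation that $H\cong H/\mathcal{R}_u(H)$ when $H$ is non-contractible, but the cleanest route is the contrapositive via a repeated row of $I_H$. It may also be worth remarking that the converse fails: a non-contractible hypergraph can still have $A_{G_H}$ singular (e.g.\ whenever $|V(H)|\ne|E(H)|$, by Corollary~\ref{v=e}), so non-contractibility is necessary but not sufficient for non-singularity.
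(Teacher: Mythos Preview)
Your proof is correct and follows essentially the same approach as the paper: argue by contradiction/contrapositive, take two distinct vertices in a non-singleton unit, observe their rows in $I_H$ coincide so $\{u,v\}$ is a linearly dependent set of vertices, and then invoke Theorem~\ref{0-ev-ld} to conclude $A_{G_H}$ is singular. Your version is slightly more explicit in writing out the witness vector $\chi_{\{u\}}-\chi_{\{v\}}$ and passing through Proposition~\ref{Ix=0_ldv}, but the logical content is the same.
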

\begin{proof}
    Assuming the contrary, let $H$ not be non-contractible. In such a case, a unit $W_E\in\mathfrak{U}(H)$ with $u,v(\ne u)\in W_E$. Consequently, the rows corresponding to $u$ and $v$ in $I_H$ are identical, leading to the linearly dependent subset of vertices  $\{u,v\}$.  Consequently, in accordance with \Cref{0-ev-ld}, it follows that $\det(A_{G_H})=0$, which is a contradiction to the fact that $A_{G_H}$ is non-singular. Hence, we conclude that $H$ is non-contractible.
\end{proof}
The aforementioned Proposition reveals that every unit of cardinality exceeding $1$ is associated with a $0$ eigenvalue of the matrix $A_{G_H}$. In the subsequent Theorem, we provide the corresponding eigenspace. The converse does not hold true; the presence of a $0$ eigenvalue in $A_{G_H}$ does not necessarily ensure the existence of a unit in $H$. A unit with cardinality $\ge 2$ is a linearly dependent set of vertices. Still, other linearly dependent sets of vertices (or hyperedges), which are not units, can be the reason for a $0$ eigenvalue of $A_{G_H}$. In our subsequent Theorem, we demonstrate that a $0$ eigenvalue in $A_{G_H}$, combined with a specific condition on the nullspace of $A_{G_H}$, guarantees the existence of a unit in $H$. However, before delving into that, we need to introduce the following vector space.
 Let $A$ be a non-empty set, and $\mathbb{R}^A$ be the collection of all real-valued functions on $A$. For any finite $B(\ne\varnothing)\subseteq A$, we define 
 \[S_A=\{f\in\mathbb{R}^A:f(a)=0 \text{~if~}a\in A\setminus B, \text{~and~}\sum\limits_{b\in B}f(b)=0 \}.\]
Given any finite subset $W\subseteq V(H)$ with $|W|=k>1$, we can enumerate $W=\{v_0,v_1,\ldots,v_{k-1}\}$. For two distinct $u,v\in V(H)$, we define a function
\(x_{uv}:V(H)\to\mathbb{R} \text{~as}\) 
\[x_{uv}(w)=\begin{cases}
    -1&\text{~if~}w=u,\\
    \phantom{-}1&\text{~if~}w=u,\\
     \phantom{-}0&\text{~otherwise}.
\end{cases}\]
The collection $\{x_{v_0v_i}:i=1,2,\ldots,k-1\}$ of linearly independent vectors forms a basis of the subspace $S_{W}$ of $\mathbb{R}^{V(H)}$. Therefore, the dimension of $S_W$ is $|W|-1$. Since $V(H)\subseteq V(G_H)$, the vector space $\mathbb{R}^{V(H)}$ can be embedded in $\mathbb{R}^{V(G_H)}$ by the embedding map
\(x\mapsto \tilde{x} \text{~for all~}x\in \mathbb{R}^{V(H)},\)
where the embedding of $x$, $\tilde{x}:V(G_H)\to\mathbb{R}$ is such that $\tilde{x}(v)= x(v)$ if $v\in V(H)$, and $\tilde{x}(v)=0$ for all $v\in V(G_H)\setminus V(H)$.
Under this embedding, the subspace $S_{W}$ of $\mathbb{R}^{V(H)}$ is also embedded as subspace $\tilde S_{W}$ of $\mathbb{R}^{V(G_H)}$. Thus, $\tilde S_{W}$ is the subspace generated by the collection $\{\tilde x_{v_0v_i}:V(G_H)\to\mathbb{R}:i=1,2,\ldots,k-1\}$.
For any $W\subset V(H)$, we say $W$ is \emph{maximal} such that $S_W$ (or $\tilde S_W$) satisfy a property $\mathcal P$ if $S_W$ (or $\tilde S_W$) satisfy the property $\mathcal P$, and there is no $W'$ with $W\subset W'\subseteq V(H)$ such that $T_W$ ( $\tilde T_W$) satisfy the property $\mathcal P$.
\begin{thm}\label{null-ag}
     Let $H$ be a hypergraph, $W\subseteq V(H)$, and $|W|\ge 2$. The set  $W$ is a unit in $H$ if and only if $0$ is an eigenvalue of $A_{G_H}$ with multiplicity at least $|W|-1$, and $W$ is a maximal subset of $V(H)$ such that $\tilde S_W$ is a subspace of the nullspace of $A_{G_H}$.
\end{thm}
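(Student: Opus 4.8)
The plan is to reduce everything to the equivalence $A_{G_H}\tilde f = 0 \iff I_H^T f = 0$ (established just before \Cref{adj-incident} for any $f\colon V(H)\to\mathbb{R}$) together with \Cref{Ix=0_ldv}. Under this translation, the condition ``$\tilde S_W$ is a subspace of the nullspace of $A_{G_H}$'' becomes exactly ``$\sum_{v\in W} f(v)\,s_v = 0$ for every $f\in S_W$''.

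\textbf{Forward direction.} Suppose $W$ is a unit, so all rows $s_v$, $v\in W$, equal a common function $s$. For $f\in S_W$ we get $\sum_{v\in W} f(v)s_v = \big(\sum_{v\in W} f(v)\big)s = 0$, hence $I_H^T f = 0$ and $A_{G_H}\tilde f = 0$; therefore $\tilde S_W$ lies in the nullspace of $A_{G_H}$. Since $\dim \tilde S_W = |W|-1$, the eigenvalue $0$ of $A_{G_H}$ has multiplicity at least $|W|-1$. For maximality, take any $W'$ with $W\subsetneq W'\subseteq V(H)$, choose $u\in W'\setminus W$ and $v\in W$; because $W$ is an \emph{entire} equivalence class of $\mathcal{R}_u(H)$ and $u\notin W$, we have $E_u(H)\ne E_v(H)$, i.e.\ $s_u\ne s_v$. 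The vector $x_{uv}$ (value $-1$ at $u$, $1$ at $v$, $0$ elsewhere) lies in $S_{W'}$, but $I_H^T x_{uv} = s_v - s_u \ne 0$, so $A_{G_H}\tilde x_{uv}\ne 0$ and $\tilde S_{W'}$ is not contained in the nullspace. Hence $W$ is maximal with the stated property.

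\textbf{Converse.} Assume $\tilde S_W$ is a subspace of the nullspace of $A_{G_H}$ and $W$ is maximal with this property; enumerate $W=\{v_0,v_1,\dots,v_{k-1}\}$. Each basis vector $x_{v_0 v_i}$ of $S_W$ satisfies $A_{G_H}\tilde x_{v_0 v_i}=0$, hence $I_H^T x_{v_0 v_i}=0$, which reads $s_{v_i}-s_{v_0}=0$. So every vertex of $W$ has the same star $E:=E_{v_0}(H)$, whence $W\subseteq W_E$, the unit generated by $E$. If $W\subsetneq W_E$, then by the forward direction applied to the unit $W_E$ we would have $\tilde S_{W_E}$ inside the nullspace of $A_{G_H}$, contradicting the maximality of $W$. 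Therefore $W=W_E$ is a unit. (The multiplicity hypothesis is in fact automatic here, since $\tilde S_W\subseteq \ker A_{G_H}$ already forces the multiplicity of $0$ to be at least $|W|-1$.)

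\textbf{Main obstacle.} No step is computationally hard; the one point that needs care is the maximality clause, where one must exploit that a unit is a \emph{full} $\mathcal{R}_u(H)$-class, so that any vertex outside $W$ has a star strictly different from those in $W$ --- this is precisely what makes $x_{uv}$ a witness for the failure of $\tilde S_{W'}\subseteq \ker A_{G_H}$. Everything else is bookkeeping with the embedding $f\mapsto\tilde f$ and \Cref{Ix=0_ldv}.
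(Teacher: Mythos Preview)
Your proof is correct and follows essentially the same route as the paper's: both directions hinge on the equivalence $A_{G_H}\tilde f=0\iff I_H^T f=0$ for $f$ supported on $V(H)$, together with the computation $I_H^T x_{v_0 v_i}=s_{v_i}-s_{v_0}$. Your converse is in fact slightly more explicit than the paper's (you spell out $W\subseteq W_E$ and invoke the forward direction to rule out $W\subsetneq W_E$, whereas the paper just asserts maximality), and your maximality argument in the forward direction exhibits a concrete witness $x_{uv}\in S_{W'}$ rather than arguing by contradiction, but these are stylistic differences only.
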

\begin{proof}
    Let $W$ be a unit with the generating set $E$, and $W=W_E=\{v_0,\ldots,v_k\}$. Therefore, $s_{v_i}-s_{v_0}=0$ for any $i=1,\ldots,k$. Consequently, For all $e\in E(H)$,
    $$(I_H^Tx_{v_0v_i})(e)=\sum\limits_{v\in V(H)}i_{ve}x_{v_0v_i}(v)=s_{v_i}(e)-s_{v_0}(e)=0\text{~for all~}i=1,\ldots,k.$$
    Therefore, $I_H^Tx_{v_0v_i}=0$, and $A_{G_H}\tilde x_{v_0v_i}=0$. Since $\{\tilde x_{v_0v_i}:i=1,\ldots,k\}$ are linearly independent sets of vectors, the multiplicity of the $0$ eigenvalue of $A_{G_H}$ is at least $k=|W|-1$, and $\tilde S_W$ is a subspace of the nullspace of $A_{G_H}$. 
    If possible, let $W$ be not a maximal set with the property  $\tilde S_W$ is a subspace of the nullspace of $A_{G_H}$. In that case, there exists $W'$ with $W\subsetneq W'\subseteq V(H)$ such that $\tilde S'_W$ is a subspace of the nullspace of $A_{G_H}$. Thus, for all $u\in W'\setminus W$ we have $\tilde x_{v_0u}\in \tilde S'_W$ and $A_{G_H}\tilde x_{v_0u}=0$. Therefore, $I_H^Tx_{v_0u}=0$, and  
    $s_{u}(e)-s_{v_0}(e)= (I_H^Tx_{v_0u})(e)=0$ for any $e\in E(H)$. That is, $E_u(H)=E_{v_0}(H)=E_{v_i}(H)$ for all $i=1,\ldots,k$. Therefore, $W'$ is a collection of vertices having the same star with $ W\subsetneq W'$. Being a unit, $W$ is a maximal collection of vertices which have the same stars, but since $ W\subsetneq W'$, this is a contradiction to the maximality of $W$ as a collection of vertices which have the same star. Therefore, our assumption is wrong, and $W$ is a maximal set with the property  $\tilde S_W$ as a subspace of the nullspace of $A_{G_H}$.

    For the converse part, suppose that $0$ is an eigenvalue of $A_{G_H}$ with multiplicity at least $|W|-1$, and $W$ is a maximal subset of $V(H)$ such that $\tilde S_W$ is a subspace of the nullspace of $A_{G_H}$. Thus, for all $i=1,\ldots,k$, we have $A_{G_H}\tilde x_{v_0v_i}=0$, and consequently,  $s_{v_i}(e)-s_{v_0}(e)= (I_H^Tx_{v_0v_i})(e)=0$ for all $e\in E(H)$. Therefore, $E_{v_0}(H)=E_{v_i}(H)$ for all $i=1,\ldots,k$. That is, $W$ is a maximal collection of vertices which have the same stars. Therefore, $W$ is a unit.
\end{proof}

For any $x:\mathfrak{U}(H)\to\mathbb{R}$, we define $\hat x:V(H)\to \mathbb{R}$ as $\hat x(v)=\frac{1}{|W_E|}x(W_E)$, where $v\in W_E$. For any $y:E(H/\mathcal{R}_u(H))\to\mathbb{R}$, we define $ y':E(H)\to\mathbb{R}$ as $y'(e)=y(\tilde e)$. For any $z:V(G_{H/\mathcal{R}_u(H)})\to\mathbb{R}$, there exists $z_1:V(H/\mathcal{R}_u(H))\to\mathbb{R}$, and $z_2:E(H/\mathcal{R}_u(H))\to\mathbb{R}$ such that $z_1=z|_{V(H/\mathcal{R}_u(H))}$, the restriction of $z$ on $V(H/\mathcal{R}_u(H))$, and $z_2=z|_{E(H/\mathcal{R}_u(H))}$, the restriction of $z$ on $E(H/\mathcal{R}_u(H))$
We define $\bar z:V(G_H)\to\mathbb{R}$ as 
$$\bar z(w)=
\begin{cases}
   \hat z_1(w)&\text{~if~}w\in V(H/\mathcal{R}_u(H))\\
    z_2'(w)&\text{~if~}w\in E(H/\mathcal{R}_u(H)).
\end{cases}$$
Units are specific linearly dependent sets of vertices in hypergraphs. Unit contraction removes the linear dependency due to units. Thus, the nullity of $A_{G_H}$ is greater than or equal to the nullity of $A_{G_{H/\mathcal{R}_u(H)}}$, and the equality occurs when $H$ is non-contractible. Thus, we have the following result.  
\begin{thm}\label{contraction-null}
    Let $H$ be a hypergraph. For any vector $z:V(G_{H/\mathcal{R}_u(H)})\to\mathbb{R}$ in the nullspace of $A_{G_{H/\mathcal{R}_u(H)}}$, the vector $\bar z:V(G_H)\to\mathbb{R}$ lies in the nullspace of $A_{G_H}$.
\end{thm}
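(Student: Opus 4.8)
The plan is to translate the claim through the block decomposition \eqref{adj-incident} of $A_{G_H}$, reducing it to two families of scalar identities that can then be read off from the hypothesis. Write $H'=H/\mathcal{R}_u(H)$, $z_1=z|_{V(H')}$, and $z_2=z|_{E(H')}$, so that, by the definition of $\bar z$, one has $\bar z(v)=\hat z_1(v)=z_1(W_{E_v(H)})/|W_{E_v(H)}|$ for $v\in V(H)$ and $\bar z(e)=z_2'(e)=z_2(\tilde e)$ for $e\in E(H)$. By \eqref{adj-incident}, $A_{G_H}\bar z=0$ is equivalent to $(I_H\bar z|_{E(H)})(v)=0$ for all $v\in V(H)$ together with $(I_H^T\bar z|_{V(H)})(e)=0$ for all $e\in E(H)$, that is, to
\[
\sum_{e\in E_v(H)}z_2(\tilde e)=0\quad(v\in V(H)),\qquad
\sum_{v\in e}\frac{z_1(W_{E_v(H)})}{|W_{E_v(H)}|}=0\quad(e\in E(H)).
\]
Applying the same block identities to $H'$, the hypothesis $A_{G_{H'}}z=0$ says precisely that $\sum_{\tilde f\ni W}z_2(\tilde f)=0$ for every unit $W\in\mathfrak{U}(H)$ and $\sum_{W\in\tilde f}z_1(W)=0$ for every contracted hyperedge $\tilde f\in E(H')$.

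The key step is the \emph{saturation} property of units: if $W=W_E$ is a unit with generator $E$ and $e\in E(H)$, then $W\cap e\ne\varnothing$ already forces $W\subseteq e$, and in fact $W\subseteq e$ if and only if $e\in E$. This is immediate from the definition of a unit as an equivalence class of $\mathcal{R}_u(H)$: if $v\in W\cap e$ then $e\in E_v(H)=E$, and every $w\in W$ satisfies $E_w(H)=E_v(H)\ni e$, hence $w\in e$. The same argument shows that $e\mapsto\tilde e$ is injective on $E(H)$: if $\tilde e_1=\tilde e_2$, picking any $v\in e_1$ produces some $u\in e_2$ with $E_u(H)=E_v(H)$, whence $u\in e_1$ and $v\in e_2$, so $e_1\subseteq e_2$ and, symmetrically, $e_2\subseteq e_1$.

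Granting these, the two families follow directly. For the first, fix $v\in V(H)$ and set $W=W_{E_v(H)}$, $E=E_v(H)$; the hyperedges of $H$ containing $v$ are exactly those in $E$, and by saturation $e\in E$ if and only if $W\in\tilde e$, so (using injectivity of $e\mapsto\tilde e$) the family $\{\tilde e:e\in E_v(H)\}$ is, without repetition, exactly $\{\tilde f\in E(H'):W\in\tilde f\}$, giving $\sum_{e\in E_v(H)}z_2(\tilde e)=\sum_{\tilde f\ni W}z_2(\tilde f)=0$. For the second, fix $e\in E(H)$ and write $e=\bigsqcup_{W\in\tilde e}(e\cap W)$; by saturation $e\cap W=W$, so $|e\cap W|=|W|$, and therefore
\begin{align*}
\sum_{v\in e}\frac{z_1(W_{E_v(H)})}{|W_{E_v(H)}|}
&=\sum_{W\in\tilde e}\ \sum_{v\in W}\frac{z_1(W)}{|W|}\\
&=\sum_{W\in\tilde e}|W|\cdot\frac{z_1(W)}{|W|}
=\sum_{W\in\tilde e}z_1(W)=0 .
\end{align*}
Together these give $A_{G_H}\bar z=0$, i.e.\ $\bar z$ lies in the nullspace of $A_{G_H}$.

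I expect the only non-routine point to be isolating and proving the saturation property, since it is exactly what makes the normalization $1/|W_E|$ built into $\hat z_1$ cancel against the number of vertices of each unit contained in a hyperedge; once that is available, the rest is a direct passage through \eqref{adj-incident} and the definitions of $\hat z_1$, $z_2'$, and $\bar z$.
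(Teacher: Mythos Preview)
Your proof is correct and follows essentially the same route as the paper's: reduce via the block form \eqref{adj-incident} to the two families $I_Hz_2'=0$ and $I_H^T\hat z_1=0$, and verify each from the corresponding identity for $H'$. The only difference is that you isolate and prove explicitly the saturation property of units and the injectivity of $e\mapsto\tilde e$, whereas the paper uses both facts tacitly when it rewrites $\sum_{e\in E_v(H)}z_2'(e)$ as $\sum_{\tilde e\ni W_E}z_2(\tilde e)$ and $\sum_{v\in e}\hat z_1(v)$ as $\sum_{W_E\in\tilde e}\sum_{v\in W_E}\hat z_1(v)$; your version is the more carefully justified of the two.
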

\begin{proof}
    Suppose that $z:V(G_{H/\mathcal{R}_u(H)})\to\mathbb{R}$ is such that $A_{G_{H/\mathcal{R}_u(H)}}z=0$. It is enough to prove $A_{G_H}\bar z=0$. For $z:V(G_{H/\mathcal{R}_u(H)})\to\mathbb{R}$, suppose that $z_1=z|_{V(H/\mathcal{R}_u(H))}$, the restriction of $z$ on $V(H/\mathcal{R}_u(H))$, and $z_2=z|_{E(H/\mathcal{R}_u(H))}$, the restriction of $z$ on $E(H/\mathcal{R}_u(H))$.
Therefore, $A_{G_{H/\mathcal{R}_u(H)}}z=0$ leads us to $I_{H/\mathcal{R}_u(H)}z_2=0$, and $I_{H/\mathcal{R}_u(H)}^Tz_1=0$. Since $I_{H/\mathcal{R}_u(H)}z_2=0$, for any $W_E\in V(H/\mathcal{R}_u(H))$, 
$$ 0=(I_{H/\mathcal{R}_u(H)}z_2)(W_E)=\sum\limits_{\tilde e: W_E\in \tilde e}z_2(\tilde e).$$

\noindent Consequently, for any $v\in W_E$ we have $(I_Hz_2')(v)=\sum\limits_{e\in E_v(H)}z_2'(e)=\sum\limits_{\tilde e: W_E\in \tilde e}z_2(\tilde e)=0$. Thus, $I_{H/\mathcal{R}_u(H)}z_2=0$ leads us to $I_Hz_2'=0 $.

 \noindent Similarly, $I_{H/\mathcal{R}_u(H)}^Tz_1=0$ yields for any $\tilde e\in E(H/\mathcal{R}_u(H))$,
$$0=(I_{H/\mathcal{R}_u(H)}^Tz_1)(\tilde e)=\sum\limits_{W_E\in \tilde e}z_1(W_E)=\sum\limits_{W_E\in \tilde e}\sum\limits_{v\in W_E}\tilde z_1(v).$$
Therefore, for any $e\in E(H)$, we have $(I_H^T\hat z_1)(e)=\sum\limits_{v\in e}\hat z_1(v)=\sum\limits_{W_E\in \tilde e}\sum\limits_{v\in W_E}\hat z_1(v)=0$, and $I_H^T\hat z_1=0$. 

\noindent Consequently, $A_{G_{H/\mathcal{R}_u(H)}}z=0$ implies $I_Hz_2'=0 $, and $I_H^T\tilde z_1=0$. Since $\bar z:V(G_H)\to\mathbb{R}$ is such that 
$$\bar z(w)=
\begin{cases}
   \tilde z_1(w)&\text{~if~}w\in V(H/\mathcal{R}_u(H))\\
    z_2'(w)&\text{~if~}w\in E(H/\mathcal{R}_u(H)),
\end{cases}$$
we have \begin{align*}
    (A_{G_H}\bar z)(w)&=\begin{cases}
    (I_Hz_2')(w)&\text{~if~}w\in V(H/\mathcal{R}_u(H))\\
   (I_H^T\tilde z_1)(w)&\text{~if~}w\in E(H/\mathcal{R}_u(H))
\end{cases}\\
&=0.
\end{align*}
\end{proof}
The \Cref{contraction-null} shows that unit contraction reduces the nullity of the adjacency of the incidence graph.
In the following result, we show that if a hypergraph $H$ is such that there are no other linearly dependent structures other than the units of cardinality more than $1$, then the nullity of $A_{G_{H/\mathcal{R}_u(H)}}$ is guaranteed to be reduced to zero. 
\begin{thm}\label{unit-sigularity removal}
    Let $H$ be a hypergraph. If the nullity of $A_{G_H}$ is $\sum\limits_{W_E\in \mathfrak{U}(H)}(|W_E|-1)$, then the the adjacency matrix $A_{G_{H/\mathcal{R}_u(H)}}$ is non-singular. 
\end{thm}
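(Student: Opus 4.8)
The plan is to show that, under the stated hypothesis, the nullspace of $A_{G_H}$ is \emph{exactly} the direct sum of the subspaces $\tilde S_W$ coming from the units, and then to argue by contradiction: a nonzero vector in the nullspace of $A_{G_{H/\mathcal{R}_u(H)}}$ would, via \Cref{contraction-null}, produce a vector in the nullspace of $A_{G_H}$ whose only way of lying in that direct sum is to be zero, which in turn forces the original vector to be zero.

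First I would record the structural description. By the forward direction of \Cref{null-ag}, for every unit $W\in\mathfrak{U}(H)$ with $|W|\ge 2$ the space $\tilde S_W$ is contained in the nullspace of $A_{G_H}$ and has dimension $|W|-1$, while for a singleton unit $\tilde S_W=\{0\}$. Since the units are the equivalence classes of $\mathcal{R}_u(H)$, they partition $V(H)$, so vectors in $\tilde S_W$ and $\tilde S_{W'}$ have disjoint supports for distinct units $W,W'$; hence $\sum_{W\in\mathfrak{U}(H)}\tilde S_W$ is a direct sum contained in the nullspace of $A_{G_H}$, of dimension $\sum_{W_E\in\mathfrak{U}(H)}(|W_E|-1)$. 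The hypothesis says the nullity of $A_{G_H}$ equals this number, so in fact the nullspace of $A_{G_H}$ is $\bigoplus_{W\in\mathfrak{U}(H)}\tilde S_W$. Two consequences of this description are what the rest of the argument needs: every $y$ in the nullspace of $A_{G_H}$ is supported on $V(H)$ (so $y|_{E(H)}=0$), and $\sum_{v\in W}y(v)=0$ for each unit $W$.

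Next, suppose for contradiction that $A_{G_{H/\mathcal{R}_u(H)}}$ is singular, and choose a nonzero $z:V(G_{H/\mathcal{R}_u(H)})\to\mathbb{R}$ with $A_{G_{H/\mathcal{R}_u(H)}}z=0$; write $z_1=z|_{V(H/\mathcal{R}_u(H))}$ and $z_2=z|_{E(H/\mathcal{R}_u(H))}$. By \Cref{contraction-null}, $\bar z$ lies in the nullspace of $A_{G_H}$. From the first consequence, $\bar z|_{E(H)}=0$; since $\bar z(e)=z_2'(e)=z_2(\tilde e)$ and the map $e\mapsto\tilde e$ of $E(H)$ onto $E(H/\mathcal{R}_u(H))$ is surjective by the definition of $H/\mathcal{R}_u(H)$, this forces $z_2=0$. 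From the second consequence, for each unit $W_E$ we get $0=\sum_{v\in W_E}\bar z(v)=\sum_{v\in W_E}\hat z_1(v)=\sum_{v\in W_E}\tfrac{1}{|W_E|}z_1(W_E)=z_1(W_E)$, so $z_1=0$ as well. Hence $z=0$, contradicting the choice of $z$, and therefore $A_{G_{H/\mathcal{R}_u(H)}}$ is non-singular.

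The only real content lies in the first step: upgrading the inclusion $\bigoplus_{W}\tilde S_W\subseteq(\text{nullspace of }A_{G_H})$ to an equality via the dimension hypothesis, for which the disjointness of units is essential, and observing that the two resulting structural features of the nullspace of $A_{G_H}$ are precisely dual to the two defining ingredients $z_1,z_2$ of a vector on $G_{H/\mathcal{R}_u(H)}$. After that the contradiction is a one-line computation, so I do not anticipate a genuine obstacle beyond keeping the four embeddings $\tilde{\cdot}$, $\hat{\cdot}$, $(\cdot)'$, and $\bar{\cdot}$ straight.
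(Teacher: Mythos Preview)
Your proof is correct and follows essentially the same route as the paper: both use \Cref{contraction-null} to lift a putative null vector $z$ of $A_{G_{H/\mathcal{R}_u(H)}}$ to $\bar z$ in the nullspace of $A_{G_H}$, and then exploit the fact that $\bar z$ is constant on each unit. The only cosmetic difference is in how the contradiction is extracted: the paper observes that $\bar z$ is orthogonal to every $\tilde S_{W_E}$ and hence adds an extra dimension to the nullspace, whereas you first identify the nullspace as exactly $\bigoplus_W\tilde S_W$ and then show membership forces $\bar z=0$ (and thus $z=0$); these are two phrasings of the same linear-algebraic fact.
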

\begin{proof}
    If possible let $A_{G_{H/\mathcal{R}_u(H)}}$ is singular. In that case there is a non zero vector $z:V(G_{H/\mathcal{R}_u(H)})\to\mathbb{R}$ such that $A_{G_{H/\mathcal{R}_u(H)}}z=0$. By \Cref{contraction-null}, $A_{G_H}\bar z=0$. For any $W_E=\{v_0,v_1,\ldots,v_k\}\in \mathfrak{U}(H)$, since $\{\tilde x_{v_0v_i}:i=1,2,\ldots,k\}$ is a basis of $\tilde S_{W_E}$, by \Cref{null-ag}, $\tilde S_{W_E}$ is a subspace of the nullspace of $A_{G_H}$. By definition, $\bar z$ is constant on $W_E$. Consequently, $\sum\limits_{w\in V(G_H)}\bar z(w)x(w)=0$ for all $x\in \tilde S_{W_E}$. the vector $\bar z$ is orthogonal with all $x\in \tilde S_{W_E}$ for all $W_E\in\mathfrak{U}(H)$. Since dimension of $\tilde S_{W_E}$ is $|W_E|-1$, the nullity of $A_{G_H}$ is at least $\left(\sum\limits_{W_E\in \mathfrak{U}(H)}(|W_E|-1)\right)+1$, a contradiction. Therefore, our assumption is wrong and $A_{G_{H/\mathcal{R}_u(H)}}$ is non-singular.
\end{proof}
\begin{cor}
    Let $H$ be a hypergraph. If the nullity of $A_{G_H}$ is $\sum\limits_{W_E\in \mathfrak{U}(H)}(|W_E|-1)$ then the number of unit in $H$, $|\mathfrak{U}(H)|=|E(H)|$, the number of hyperedges in $H$.
\end{cor}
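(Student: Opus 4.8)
The plan is to chain Theorem~\ref{unit-sigularity removal} with Corollary~\ref{v=e}, applying the latter to the unit-contraction $H/\mathcal{R}_u(H)$, and then to match the number of hyperedges of $H/\mathcal{R}_u(H)$ with that of $H$. First I would invoke Theorem~\ref{unit-sigularity removal}: the hypothesis that the nullity of $A_{G_H}$ equals $\sum_{W_E\in\mathfrak{U}(H)}(|W_E|-1)$ gives at once that $A_{G_{H/\mathcal{R}_u(H)}}$ is non-singular. Since $V(H/\mathcal{R}_u(H))=\mathfrak{U}(H)$ by definition of the unit-contraction, we have $|V(H/\mathcal{R}_u(H))|=|\mathfrak{U}(H)|$. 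Applying Corollary~\ref{v=e} to the hypergraph $H/\mathcal{R}_u(H)$ then yields $|V(H/\mathcal{R}_u(H))|=|E(H/\mathcal{R}_u(H))|$, that is, $|\mathfrak{U}(H)|=|E(H/\mathcal{R}_u(H))|$.

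The remaining point is to establish $|E(H/\mathcal{R}_u(H))|=|E(H)|$. The assignment $e\mapsto\tilde e$ is surjective onto $E(H/\mathcal{R}_u(H))$ by the very definition of that edge set, so it suffices to prove injectivity. Here I would use the fact that every hyperedge saturates each unit it meets: if $W_E\in\mathfrak{U}(H)$ and $e\in E(H)$ with $W_E\cap e\neq\varnothing$, choosing $v\in W_E\cap e$ gives $E_u(H)=E_v(H)\ni e$ for every $u\in W_E$, hence $u\in e$; thus $W_E\subseteq e$. Consequently $\tilde e=\{W_{E_v(H)}:v\in e\}=\{W\in\mathfrak{U}(H):W\subseteq e\}$, and therefore $e=\bigcup_{W\in\tilde e}W$. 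So $e$ is recovered from $\tilde e$, the map $e\mapsto\tilde e$ is injective, and $|E(H/\mathcal{R}_u(H))|=|E(H)|$. Combining this with the previous paragraph gives $|\mathfrak{U}(H)|=|E(H)|$, as claimed.

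The only mild obstacle is the injectivity of $e\mapsto\tilde e$; the rest is a direct application of the already-established Theorem~\ref{unit-sigularity removal}, the definition of $H/\mathcal{R}_u(H)$, and Corollary~\ref{v=e}, with no computational content.
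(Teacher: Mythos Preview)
Your proposal is correct and is precisely the argument the paper has in mind: the corollary is stated without proof immediately after Theorem~\ref{unit-sigularity removal}, and the intended route is exactly the chain Theorem~\ref{unit-sigularity removal} $\Rightarrow$ Corollary~\ref{v=e} applied to $H/\mathcal{R}_u(H)$, together with $|E(H/\mathcal{R}_u(H))|=|E(H)|$. Your verification of the injectivity of $e\mapsto\tilde e$ via the saturation property of units (so that $e=\bigcup_{W\in\tilde e}W$) is the only point the paper leaves tacit, and you handle it cleanly.
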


\subsubsection{Equal partition of hyperedges}
Given two disjoint subsets $U=\{u_1,u_2,\ldots,u_p\}$, and $V=\{v_1,v_2,\ldots,v_q\}$ of the vertex set $V(H)$ of a hypergraph $H$, if 
$|U\cap e|=|V\cap e| $ for all $e\in E(H)$, then the pair $U$, and $V$ is called an \emph{equal partition of hyperedges} in $H$. An equal partition of hyperedges $U, V$ gives a linearly dependent set of vertices $W=U\cup V$. 

The corresponding coefficient vector $c_W:V(H)\to\mathbb{R}$ is defined by $c_W=\chi_{U}-\chi_V$. Thus, $\sum\limits_{v\in V(H)}c_W(v)s_v=0$. Consequently, the \Cref{Ix=0_ldv} leads to the following Theorem.
\begin{thm}\label{par-part}
    Let $H$ be a hypergraph. Two disjoint subsets $U=\{u_1,u_2,\ldots,u_p\}$, and $V=\{v_1,v_2,\ldots,v_q\}$ of the vertex set $V(H)$ forms an equal partition of hyperedges in $H$ if and only if $A_{G_H}\tilde x=0$, where $x=\chi_{U}-\chi_V$.
\end{thm}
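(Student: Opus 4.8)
The plan is to reduce the statement to a one-line computation with the incidence matrix, using the block structure of $A_{G_H}$ recorded in \eqref{adj-incident}. First I would invoke the observation made just before \Cref{0-ev-ld}: for any $x:V(H)\to\mathbb{R}$ the embedding $\tilde x$ vanishes on $E(H)$, so $(A_{G_H}\tilde x)(v)=(I_H\,\tilde x|_{E(H)})(v)=0$ holds automatically for every $v\in V(H)$, while $(A_{G_H}\tilde x)(e)=(I_H^T\tilde x|_{V(H)})(e)=(I_H^T x)(e)$ for every $e\in E(H)$. Consequently $A_{G_H}\tilde x=0$ if and only if $I_H^T x=0$, and it suffices to characterize when $I_H^T x=0$ for the particular vector $x=\chi_U-\chi_V$.

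Next I would carry out the explicit evaluation: for an arbitrary $e\in E(H)$,
\[
(I_H^T x)(e)=\sum_{v\in V(H)} i_{ve}\,x(v)=\sum_{v\in e} x(v)=\sum_{v\in e}\bigl(\chi_U(v)-\chi_V(v)\bigr)=|U\cap e|-|V\cap e| .
\]
Hence $I_H^T x=0$ if and only if $|U\cap e|=|V\cap e|$ for every $e\in E(H)$. Since $U$ and $V$ are already assumed disjoint, this last condition is exactly the definition of $U,V$ forming an equal partition of hyperedges in $H$. Chaining this equivalence with the reduction from the first step yields the claimed biconditional.

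I would close with a remark tying the statement to the paragraph preceding it: $x=\chi_U-\chi_V$ is precisely the coefficient vector $c_W$ of $W=U\cup V$, so \Cref{Ix=0_ldv} already records one half of the content (that $W$ is a linearly dependent set of vertices with this coefficient vector, equivalently $I_H^T x=0$); the genuinely new point here is the combinatorial reading of $I_H^T x=0$ as the balancing condition $|U\cap e|=|V\cap e|$ on every hyperedge.

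There is essentially no hard step. The only place to be slightly careful is the first reduction: one must note that $A_{G_H}\tilde x=0$ is \emph{equivalent} to, not merely implied by, $I_H^T x=0$, and this holds because $\tilde x$ is supported inside $V(H)$, forcing the $V(H)$-block of $A_{G_H}\tilde x$ to vanish with no further hypothesis. Everything else is the direct computation of $\sum_{v\in e}(\chi_U-\chi_V)(v)$.
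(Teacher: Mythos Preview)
Your proof is correct and follows essentially the same approach as the paper: reduce $A_{G_H}\tilde x=0$ to $I_H^T x=0$ via the block structure, then compute $(I_H^T x)(e)=|U\cap e|-|V\cap e|$. The paper organizes the forward direction slightly differently---it first argues that $\sum_i s_{u_i}-\sum_j s_{v_j}=0$ to exhibit $W=U\cup V$ as a linearly dependent set with coefficient vector $\chi_U-\chi_V$, and only then invokes $I_H^T x=0$---but this is the same computation dressed up in the language of the preceding paragraph, and your single equivalence chain is in fact a bit cleaner.
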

\begin{proof}
Let $U=\{u_1,u_2,\ldots,u_p\}$, and $V=\{v_1,v_2,\ldots,v_q\}$ form an equal partition of hyperedges in $H$.
Consider the function $y=(\sum_{i=1}^ps_{u_i})-(\sum_{j=1}^qs_{v_i}):E(H)\to \mathbb{R}$. Since $|e\cap U|=|e\cap V|$ we have $y(e)=|e\cap U|-|e\cap V|=0$ for all $e\in E(H)$. Therefore, $(\sum_{i=1}^ps_{u_i})-(\sum_{j=1}^qs_{v_i})=0$, and $W=U\cup V$ is a linearly dependent set of vertices, and $x=\chi_{U}-\chi_V$ is a coefficient vector of $W$. That is, $ \sum\limits_{v\in W}x(v)s_v=0$, where $x=\chi_{U}-\chi_V$. Since $supp(x)=W$, $$(I_H^Tx)(e)=\sum\limits_{v\in V(H)}i_{ve}x(v)=\sum\limits_{v\in V(H)}x(v)s_v(e)=\sum\limits_{v\in W}x(v)s_v(e)=0\text{~for all~}e\in E(H).$$  
Therefore, $A_{G_H}\tilde x=0$.

    Conversely, if  $A_{G_H}\tilde x=0$, then $I_H^Tx=0$. Therefore, $\sum\limits_{v\in e }(\chi_{U}-\chi_V)(v)=(I_H^Tx)(e)=0$ for all $e\in E(H)$. Therefore, $|e\cap U|=\sum\limits_{v\in e }\chi_{U}(v)=\sum\limits_{u\in e }\chi_{V}(u)=|e\cap V|$ for all $e\in E(H) $.
\end{proof}

 Let $H$ be a hypergraph, and $v_0,v_1,\ldots,v_k\in V(H)$ be such that $E_{v_0}(H)=E_{v_1}(H)\cup E_{v_2}(H)\cup\ldots\cup E_{v_k}(H)$ with $E_{v_i}(H)\cap E_{v_j}(H)=\emptyset$ for all $1\le i<j\le k$. We say $v_1,\ldots,v_k$ forms a \emph{partition of the star} of $v_0$. 
 If $U=\{v_0\}$, and $V=\{v_1,\ldots,v_k\}$, then since $E_{v_i}(H)\cap E_{v_j}(H)=\emptyset$ for all $1\le i<j\le k$, the cardinality $|e\cap U|=|e\cap V|$ for all $e\in E(H)$. 
 Thus, we have the following Corollary of the \Cref{par-part}.
 \begin{cor}
     \label{th-part-ver}
    Let $H$ be a hypergraph and $v_0,v_1,\ldots,v_k\in V(H)$ are distinct vertices in $H$. The vertices $v_1,\ldots,v_k\in V(H)$ forms a partition of the star of $v_0\in V(H)$ if and only if $A_{G_H}x=0$ where $x:V(H)\to\mathbb{R}$ is such that  $supp(x)=\{v_0,v_1,\ldots,v_k\}\subseteq V(H)$, and $-x(v_0)=x(v_1)=\ldots=x(v_k)$.
 \end{cor}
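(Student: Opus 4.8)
The plan is to derive this Corollary directly from \Cref{par-part} by exhibiting the appropriate equal partition of hyperedges. First I would set $U=\{v_0\}$ and $V=\{v_1,\ldots,v_k\}$, which are disjoint since the vertices are distinct, and observe that the vector $x$ in the statement is (up to the scalar $-x(v_0)=x(v_1)=\cdots=x(v_k)$) exactly $\chi_U-\chi_V$. So the content of the Corollary is that $v_1,\ldots,v_k$ partition the star of $v_0$ if and only if $U,V$ form an equal partition of hyperedges, and then \Cref{par-part} closes the gap.

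For the forward direction, I would assume $E_{v_0}(H)=E_{v_1}(H)\cup\cdots\cup E_{v_k}(H)$ with the stars $E_{v_i}(H)$ pairwise disjoint for $1\le i<j\le k$. Fix $e\in E(H)$. Then $|e\cap U|=\chi_U(v_0)\cdot[\,e\in E_{v_0}(H)\,]$, i.e. it is $1$ if $e\in E_{v_0}(H)$ and $0$ otherwise. On the other side, $|e\cap V|=\#\{i\in\{1,\ldots,k\}: e\in E_{v_i}(H)\}$. Since the $E_{v_i}(H)$ are pairwise disjoint, this count is $0$ or $1$, and it is $1$ exactly when $e$ lies in their union, which by hypothesis is $E_{v_0}(H)$. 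Hence $|e\cap U|=|e\cap V|$ for every $e$, so $(U,V)$ is an equal partition of hyperedges, and \Cref{par-part} gives $A_{G_H}\tilde x=0$; scaling by the common constant does not affect this.

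For the converse, suppose $A_{G_H}x=0$ with $\mathrm{supp}(x)=\{v_0,\ldots,v_k\}$ and $-x(v_0)=x(v_1)=\cdots=x(v_k)=:c\ne 0$. Dividing by $c$, the normalized vector is $\chi_U-\chi_V$, which also lies in the nullspace, so by \Cref{par-part} the pair $U,V$ forms an equal partition of hyperedges: $|e\cap U|=|e\cap V|$ for all $e\in E(H)$. As above, the left side is the indicator of $e\in E_{v_0}(H)$ and the right side is $\#\{i:e\in E_{v_i}(H)\}$. Equality for all $e$ forces this multiplicity to be at most $1$ for every $e$, which is precisely pairwise disjointness $E_{v_i}(H)\cap E_{v_j}(H)=\emptyset$ for $1\le i<j\le k$; and it forces $e\in E_{v_0}(H)$ iff $e$ lies in exactly one $E_{v_i}(H)$, i.e. $E_{v_0}(H)=\bigcup_{i=1}^k E_{v_i}(H)$. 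Thus $v_1,\ldots,v_k$ form a partition of the star of $v_0$.

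I do not expect a serious obstacle here; the only point needing a little care is bookkeeping the scalar $c$ and confirming that multiplying a nullspace vector by a nonzero constant stays in the nullspace (immediate by linearity), together with the translation between the cardinality identity $|e\cap U|=|e\cap V|$ and the two separate conclusions (the union equality and the pairwise-disjointness), which is exactly the ``$0$-or-$1$ counting'' argument sketched above.
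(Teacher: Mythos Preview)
Your proof is correct and follows essentially the same approach as the paper: set $U=\{v_0\}$, $V=\{v_1,\ldots,v_k\}$, verify that the partition-of-star condition is equivalent to $(U,V)$ being an equal partition of hyperedges, and invoke \Cref{par-part}. In fact your write-up is more complete than the paper's, which only spells out the forward implication in detail and leaves the converse (from $|e\cap U|=|e\cap V|$ back to the disjoint-union description of $E_{v_0}(H)$) implicit in the final appeal to \Cref{par-part}; your $0$-or-$1$ counting argument fills that gap cleanly.
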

 \begin{proof}
     Let $U=\{v_0\}$, and $V=\{v_1,\ldots,v_k\}$. Since $E_{v_0}(H)=E_{v_1}(H)\cup E_{v_2}(H)\cup\ldots\cup E_{v_k}(H)$ with $E_{v_i}(H)\cap E_{v_j}(H)=\emptyset$ for all $1\le i<j\le k$, If $e\in E_{v_0}(H)$, then there exists exactly one $v_i\in \{v_1,\ldots,v_k\}$ such that $e\in E_{v_i}(H)$. That is, $|e\cap U|=1=|e\cap V|$. If $e\in E(H)\setminus E_{v_0}(H)$, then $E_{v_0}(H)=E_{v_1}(H)\cup E_{v_2}(H)\cup\ldots\cup E_{v_k}(H)$ leads to $e\notin E_{v_i}(H)$ for all $i=1,\ldots,k$. Therefore, $|e\cap U|=0=|e\cap V|$. Consequently, $|e\cap U|=|e\cap V|$ for all $e\in E(H)$ and $W=U\cup V$ forms an equal partition of hyperedges. Thus, the result follows from the \Cref{par-part}.
 \end{proof}
 Any unit with a cardinality of at least $2$ induces an equal partition of hyperedges. Let $W_E=\{v_1,v_2,\ldots,v_k\}$ be a unit with $k\ge 2$. Let $k_e=\max\{i\in \mathbb{N}:i\le k\,\text{~and~}i\text{~is even~}\}$. Now $U=\{v_1,\ldots v_{\frac{k_e}{2}}\}$, and $V=\{v_{(\frac{k_e}{2}+1)},\ldots,v_{k_e}\}$ form an equal partition of hyperedges. Similarly, if $W_{E_1},\ldots, W_{E_k}$ are units with $|W_{E_i}|\ge 2$, then there are two distinct $u_i,v_i\in W_{E_i}$ for all $i=1,\ldots,k$. In that case, $U=\{u_i:i=1,\ldots,k\}$, and $V=\{v_i:i=1,\ldots,k\}$ form an equal partition of hyperedges. However, an equal partition of hyperedges is not necessarily associated with units. For example, consider the hypergraph $H$ with $V(H)=\{1,2,3,4,5\}$, and $E(H)=\{e_1=\{1,2,3,5\},e_2=\{1,3,4,5\},e_3=\{1,2,4,5\}\}$. The sets $U=\{1,5\}$, and $V=\{2,3,4\}$ forms an equal partition of hyperedgess in $H$. Here, $|e\cap U|=2=|e\cap V|$ for all $e\in E(H)$.
Since unit contraction removes only the singularity of $A_{G_H}$ associated with units, if an equal partition of hyperedges is not associated with units, then besides being reflected as a singularity of $A_{G_H}$, it is also reflected as a singularity of $A_{G_{H/\mathcal{R}_u(H)}}$. For any $U\subseteq V(H)$, we define $\hat U=\{W_E\in \mathfrak{U}(H):E=E_v(H)\text{~for some~}v\in U\}$.
\begin{thm}
      Let $H$ be a hypergraph. Two disjoint subsets $U=\{u_1,u_2,\ldots,u_p\}$, and $V=\{v_1,v_2,\ldots,v_q\}$ of the vertex set $V(H)$ form an equal partition of hyperedges in $H$ with $ \hat U\cap \hat V=\emptyset$, if and only if $A_{G_{H/\mathcal{R}_u(H)}}\tilde x=0$, where $x:V(H/\mathcal{R}_u(H))\to\mathbb{R}$ is such that $x(W_E)=0$ if $W_E\notin \hat U\cup\hat V$, $x(W_E)=|W_E\cap U|$ if $W_E\in \hat U$, and $x(W_E)=-|W_E\cap V|$ if $W_E\in \hat V$.
\end{thm}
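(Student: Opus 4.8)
The plan is to mimic the proof strategy of \Cref{par-part} but work over the unit-contracted hypergraph $H/\mathcal{R}_u(H)$ instead of $H$, using the dictionary between functions on $V(G_H)$ and functions on $V(G_{H/\mathcal{R}_u(H)})$ already set up before \Cref{contraction-null}. The key point to isolate first is the combinatorial identity: for any hyperedge $e\in E(H)$ with corresponding $\tilde e\in E(H/\mathcal{R}_u(H))$, and any $U\subseteq V(H)$, we have $|e\cap U| = \sum_{W_E\in\tilde e}|W_E\cap U|$, because the units $W_E$ with $W_E\in\tilde e$ partition $e$ and each contributes $|W_E\cap U|$ vertices of $U$. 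This reduces the claimed equivalence to the statement that $U,V$ is an equal partition of hyperedges in $H$ (with $\hat U\cap\hat V=\emptyset$) if and only if $\sum_{W_E\in\tilde e}|W_E\cap U| = \sum_{W_E\in\tilde e}|W_E\cap V|$ for all $\tilde e$, i.e. $\sum_{W_E\in\tilde e}x(W_E)=0$ for all $\tilde e$, where $x$ is the function described in the statement.

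Concretely, I would proceed as follows. First, assume $U,V$ form an equal partition of hyperedges with $\hat U\cap\hat V=\emptyset$. The condition $\hat U\cap\hat V=\emptyset$ guarantees $x$ is well-defined: no unit $W_E$ is simultaneously required to take value $|W_E\cap U|\ge 1$ and $-|W_E\cap V|\le -1$, so the three cases in the definition of $x$ are exhaustive and non-overlapping. Then for each $\tilde e\in E(H/\mathcal{R}_u(H))$, writing $e$ for a hyperedge with image $\tilde e$,
\[
(I_{H/\mathcal{R}_u(H)}^T x)(\tilde e) = \sum_{W_E\in\tilde e} x(W_E) = \sum_{W_E\in\hat U,\, W_E\in\tilde e}|W_E\cap U| - \sum_{W_E\in\hat V,\, W_E\in\tilde e}|W_E\cap V| = |e\cap U| - |e\cap V| = 0,
\]
using the partition identity above and the hypothesis. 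Hence $I_{H/\mathcal{R}_u(H)}^T x = 0$, and since $\tilde x$ is supported on $V(H/\mathcal{R}_u(H))$, the block form of $A_{G_{H/\mathcal{R}_u(H)}}$ gives $A_{G_{H/\mathcal{R}_u(H)}}\tilde x = 0$. For the converse, assume $A_{G_{H/\mathcal{R}_u(H)}}\tilde x=0$; then $I_{H/\mathcal{R}_u(H)}^T x=0$, so $\sum_{W_E\in\tilde e}x(W_E)=0$ for every $\tilde e$, and running the same chain of equalities backwards yields $|e\cap U|=|e\cap V|$ for all $e\in E(H)$. The disjointness $\hat U\cap\hat V=\emptyset$ is part of the hypothesis on that side, so nothing further is needed there; one should perhaps note that the given $x$ being a legitimate function already presupposes $\hat U\cap\hat V=\emptyset$.

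The main obstacle, though a mild one, is bookkeeping the correspondence between $e\in E(H)$ and $\tilde e\in E(H/\mathcal{R}_u(H))$ carefully — in particular checking that the map $e\mapsto\tilde e$ need not be injective, but that is harmless because the identity $|e\cap U|=\sum_{W_E\in\tilde e}|W_E\cap U|$ holds for \emph{every} preimage $e$ of $\tilde e$, so the condition ``$\sum_{W_E\in\tilde e}x(W_E)=0$ for all $\tilde e$'' is genuinely equivalent to ``$|e\cap U|=|e\cap V|$ for all $e\in E(H)$''. A second small point to verify is that $x$ is not the zero function precisely when $U\cup V\neq\emptyset$, which is implicit; and that the support of $\tilde x$ lies in $V(H/\mathcal{R}_u(H))$ so that the reduction to $I_{H/\mathcal{R}_u(H)}^T x = 0$ via the block decomposition of $A_{G_{H/\mathcal{R}_u(H)}}$ (exactly as in the discussion preceding \Cref{0-ev-ld}) is valid. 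Everything else is a routine transcription of the argument in \Cref{par-part}.
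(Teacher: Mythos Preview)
Your proof is correct and follows essentially the same route as the paper's: both compute $(I_{H/\mathcal{R}_u(H)}^T x)(\tilde e)$ by grouping the vertices of $e$ according to their units and identifying the result with $|e\cap U|-|e\cap V|$, though the paper phrases the grouping via the star functions $s_v$ and routes through \Cref{par-part} and \Cref{Ix=0_ldv} rather than stating the partition identity $|e\cap U|=\sum_{W_E\in\tilde e}|W_E\cap U|$ directly as you do. One harmless inaccuracy: the map $e\mapsto\tilde e$ is in fact injective (the units in $\tilde e$ partition $e$, so $\tilde e$ determines $e$), so your precaution about non-injectivity is unnecessary.
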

\begin{proof}
    Since $U$, and $V$ form an equal partition of hyperedges, by the \Cref{par-part}, $I_H^T(\chi_U-\chi_V)=0$. Therefore, for all 
$e\in E(H)$, the summation $\sum\limits_{v\in V(H)}i_{ve}\chi_U(v)=\sum\limits_{v\in V(H)}i_{ve}\chi_V(v)$. This leads to $\sum\limits_{v\in U}s_v(e)=\sum\limits_{v\in V}s_v(e)$ for all $e\in E(H)$. If for any unit $W_E$, two distinct vertices $u_i,u_j\in W_E\cap U$, then $s_{v_i}=s_{v_i}$, and $s_{v_i}(e)=s_{W_E}{(\tilde e)}=s_{v_i}(e) $ for all $e\in E(H)$. Thus,  $\sum\limits_{v\in U}s_v=\sum\limits_{v\in V}s_v$ leads to $ \sum\limits_{W_E\in\hat U}|W_E\cap U|s_{W_E}=\sum\limits_{W_E\in\hat V}|W_E\cap V|s_{W_E}$. Therefore, $\hat W=\hat U\cup \hat V$ is a linearly dependent set in the unit contraction $H/\mathcal{R}_u(H)$. Therefore, the \Cref{Ix=0_ldv} leads to $I_{H/\mathcal{R}_u(H)}^Tx=0$, and thus, $A_{G_{H/\mathcal{R}_u(H)}}\tilde x=0$.

Conversely, $A_{G_{H/\mathcal{R}_u(H)}}\tilde x=0$ leads to $I^T_{{H/\mathcal{R}_u(H)}}x=0$. Therefore, for all $\tilde e\in E(H/\mathcal{R}_u(H))$, 
\begin{align*}
    \sum\limits_{W_E\in\mathfrak{U}(H)}x(v)s_{W_E}(\tilde e)=(I^T_{H/\mathcal{R}_u(H)}x)(\tilde e)=0.
\end{align*}
    Thus, $\sum\limits_{W_E\in \hat U}|W_E\cap U|s_{W_E}-\sum\limits_{W_E\in \hat V}|W_E\cap V|s_{W_E}= \sum\limits_{W_E\in \hat U}x(W_E)s_{W_E}+\sum\limits_{W_E\in \hat V}x(W_E)s_{W_E}=  \sum\limits_{W_E\in\mathfrak{U}(H)}x(v)s_{W_E}=0$.
    Therefore, for all $e\in E(H)$, we have $\sum\limits_{v\in U}s_u(e)=\sum\limits_{W_E\in \hat U}|W_E\cap U|s_{W_E}=\sum\limits_{W_E\in \hat V}|W_E\cap V|s_{W_E}=\sum\limits_{v\in V}s_u(e)$. That is, $ \sum\limits_{v\in U}s_u(e)-\sum\limits_{v\in V}s_u(e)=0$, and thus $A_{G_H}\tilde y=0$, where $y=(\chi_U-\chi_V)$. Therefore, by the \Cref{par-part}, $U$ and $V$ form an equal partition of hyperedges in $H$. 
\end{proof}
 \subsubsection{Equal partition of stars}               
          We have discussed the equal partition of hyperedges in a hypergraph $H$, which leads to linearly dependent vertices. Since each hyperedge $e\in E(H)$ becomes a vertex in the dual hypergraph $H^*$, there are similar concepts of these above-mentioned partitions, which lead to linearly dependent collections of hyperedges in $H$. 

Let $H$ be a hypergraph. If $E=\{e_1,\ldots, e_p\}$, and $F=\{f_1,\ldots,f_q\}$ are two disjoint sets of hyperedges such that $|E_v(H)\cap E|=|E_v(H)\cap F|$ for all $v\in V(H)$, then we say $E=\{e_1,\ldots, e_p\}$, and $F=\{f_1,\ldots,f_q\}$ is an \emph{equal partition of stars} in the hypergraph $H$.
\begin{thm}
    Let $H$ be a hypergraph. The collections $E=\{e_1,\ldots, e_p\}$, and $F=\{f_1,\ldots,f_q\}$ of hyperedges form an equal partition of stars if and only if $A_{G_H}\tilde{y}=0$, where $y:E(H)\to\mathbb{R}$ is defined by $y=\chi_E-\chi_F$.
\end{thm}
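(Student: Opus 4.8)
The plan is to mirror the proof of \Cref{par-part}, interchanging the roles of vertices and hyperedges (equivalently, passing to the dual hypergraph $H^*$) and using \Cref{I_H_lde} in place of \Cref{Ix=0_ldv}. The key preliminary observation is that the embedded vector $\tilde y$ is supported on $E(H)\subseteq V(G_H)$, so by the block description of $A_{G_H}$ in \eqref{adj-incident} together with the identity $(A_{G_H}x)(v)=(I_Hx|_{E(H)})(v)$ noted just before \Cref{0-ev-ld}, we have $A_{G_H}\tilde y=0$ if and only if $I_Hy=0$. Thus it suffices to prove that $E$ and $F$ form an equal partition of stars if and only if $I_Hy=0$, where $y=\chi_E-\chi_F$.

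For the forward direction, I would start from the hypothesis $|E_v(H)\cap E|=|E_v(H)\cap F|$ for every $v\in V(H)$ and evaluate the candidate linear combination $\sum_{e\in E}\chi_e-\sum_{f\in F}\chi_f$ at an arbitrary vertex $v$: since $\chi_e(v)=i_{ve}$ equals $1$ exactly when $e\in E_v(H)$, this value is $|E_v(H)\cap E|-|E_v(H)\cap F|=0$. Hence $\sum_{e\in E}\chi_e-\sum_{f\in F}\chi_f=0$, so $E\cup F$ is a linearly dependent set of hyperedges whose coefficient vector is precisely $y=\chi_E-\chi_F$ (disjointness $E\cap F=\varnothing$ ensures $y$ is well defined, nonzero, and has $supp(y)\subseteq E\cup F$). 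By \Cref{I_H_lde} this yields $I_Hy=0$, and therefore $A_{G_H}\tilde y=0$.

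For the converse, assuming $A_{G_H}\tilde y=0$ I would deduce $I_Hy=0$ as above, so that for every $v\in V(H)$,
\[0=(I_Hy)(v)=\sum_{e\in E(H)}i_{ve}\,y(e)=\sum_{e\in E}i_{ve}-\sum_{f\in F}i_{vf}=|E_v(H)\cap E|-|E_v(H)\cap F|.\]
Hence $|E_v(H)\cap E|=|E_v(H)\cap F|$ for all $v\in V(H)$, which is exactly the statement that $E$ and $F$ form an equal partition of stars.

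There is essentially no difficult step here; the only thing to be careful about is the bookkeeping that identifies the restriction of $\tilde y$ to $E(H)$ with $y$, and the column of $I_H$ indexed by $e$ with $\chi_e$, so that the ``equal partition of stars'' condition on vertexwise intersection cardinalities translates cleanly into the vanishing of $I_Hy$. One could alternatively phrase the entire argument as an instance of \Cref{par-part} applied to the dual hypergraph $H^*$ (an equal partition of stars in $H$ being exactly an equal partition of hyperedges in $H^*$, with $I_{H^*}=I_H^T$); I would note this as a remark but carry out the direct computation for self-containedness.
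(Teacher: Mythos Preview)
Your proposal is correct and follows essentially the same approach as the paper's proof: both directions reduce $A_{G_H}\tilde y=0$ to $I_Hy=0$ via the block form \eqref{adj-incident}, then identify $(I_Hy)(v)$ with $|E_v(H)\cap E|-|E_v(H)\cap F|$ and invoke \Cref{I_H_lde}. Your additional remark on obtaining the result directly from \Cref{par-part} applied to $H^*$ is a nice observation not made explicit in the paper.
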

\begin{proof}
    Suppose that
     $E=\{e_1,\ldots, e_p\}$, and $F=\{f_1,\ldots,f_q\}$ form an equal partition of stars. For any $v\in V(H)$, since $\sum\limits_{e\in E}\chi_e(v)=|E_v(H)\cap E|=|E_v(H)\cap F|=\sum\limits_{e\in F}\chi_e(v)$, the set $E\cup F$ is a collection of linearly dependent hyperedges with a coefficient vector $y=\chi_E-\chi_F$. Therefore, by the \Cref{I_H_lde}, $I_Hy=0$, and $A_{G_H}\tilde{y}=0$.

     Conversely, if $A_{G_H}\tilde{y}=0$, then $I_Hy=0$. Therefore, for all $v\in V(H)$, we have
     $|E_v(H)\cap E|-|E_v(H)\cap F|= \sum\limits_{e\in E}\chi_e(v)-\sum\limits_{e\in F}\chi_e(v)=\sum\limits_{e\in E(H)}i_{ve}(\chi_E-\chi_F)(e)=(I_Hy)(v)=0$. This completes the proof.
\end{proof}
\subsubsection{Covering projection of a hypergraph and linearly dependent vertices}
Let $H$ and $ \bar H$ be two hypergraphs.  A \emph{hypergraph homomorphism} from $H $ to $\bar H$ is a function $f:V(H)\to V(\bar H)$ such that for all $e\in E(H)$, the collection $\{f(v):v\in e \}\in E(\bar H)$. That is, $f$ induce a map $ \bar{f}:E(H)\to E(\bar H)$ such that $\Bar{f}(e)=\{f(v):v\in e \}$ for all $e\in E(H)$. Note that a hyperedge $e\in E(H)$ and its image $\bar{f}(e)\in E(\bar H)$ can have different cardinalities. For example, the unit contraction in $H$ leads to a hypergraph homomorphism $f:V(H)\to V(H/\mathcal{R}_u(H))$ such that $f(v)=W_E$, where $E=E_v(H)$ for all $v\in V(H)$. For this homomorphism $\Bar{f}(e)=\Tilde{e}$ for all $e\in E(H)$. Now, if a hyperedge $e$
contains an unit $W_E$ with $|W_E|>1$, then $|e|>|\Tilde{e}|$.

A \emph{covering projection} from $H$ to $\Bar H$ is a surjective homomorphism $f:V(H)\to V(\Bar{H})$ such that for all $v\in V(H)$, the restriction map $\bar f|_{E_v(H)}:E_v(H)\to E_{f(v)}(\bar H)$ is a bijection. We say $\Bar{H}$ is a covering projection of $H$ under the projection map $f$. For instance, the hypergraph homomorphism $f: V(H)\to V(H/\mathcal{R}_u(H))$, induced by the unit contraction, is a covering projection. In the \Cref{ex-unit}, consider the vertex $9\in V(H)$ and $W_{E_4}\in V(H/\mathcal{R}_u(H))$ (see the \Cref{fig:unit and contraction}). The restriction map $\bar f|_{E_v(H)}:E_v(H)\to E_{f(v)}(H/\mathcal{R}_u(H))$ is defined by $e\mapsto \Tilde{e}$ for all $e\in \{e_5,e_4\}$. In this instance, the map  $ \bar{f}:E(H)\to E(H/\mathcal{R}_u(H))$ is bijection and thus, all the restriction maps $\bar f|_{E_v(H)}:E_v(H)\to E_{f(v)}(H/\mathcal{R}_u(H))$ are also bijections The following example shows that there are some instances where $ \bar{f}:E(H)\to E(\bar H)$ is not bijection but the restrictions $\bar f|_{E_v(H)}:E_v(H)\to E_{f(v)}(\bar H)$ are bijections for all $v\in V(H)$.
\begin{exm}
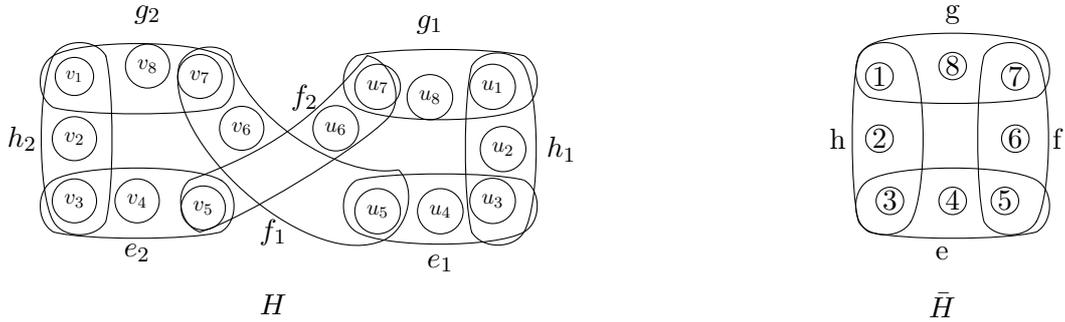
\begin{figure}[H]
    \centering
    \begin{tikzpicture}[scale=0.55]

		\node [style=none] (0) at (-11, 2) {};
		\node [style=none] (1) at (-12.25, 2) {};
		\node [style=none] (2) at (-12.25, -2) {};
		\node [style=none] (3) at (-11, -2) {};
		\node [style=none] (4) at (-8.25, 0.75) {};
		\node [style=none] (5) at (-8.25, 2) {};
		\node [style=none] (6) at (-12.25, 2) {};
		\node [style=none] (7) at (-12.25, 0.75) {};
		\node [style=none] (8) at (-8.25, -2.25) {};
		\node [style=none] (9) at (-8.25, -1) {};
		\node [style=none] (10) at (-12.25, -1) {};
		\node [style=none] (11) at (-12.25, -2.25) {};
		\node [style=none] (12) at (-2.25, -2.25) {};
		\node [style=none] (13) at (-1, -2.25) {};
		\node [style=none] (14) at (-1, 1.75) {};
		\node [style=none] (15) at (-2.25, 1.75) {};
		\node [style=none] (16) at (-5, -1) {};
		\node [style=none] (17) at (-5, -2.25) {};
		\node [style=none] (18) at (-1, -2.25) {};
		\node [style=none] (19) at (-1, -1) {};
		\node [style=none] (20) at (-5, 2) {};
		\node [style=none] (21) at (-5, 0.75) {};
		\node [style=none] (22) at (-1, 0.75) {};
		\node [style=none] (23) at (-1, 2) {};
		\node [style=none] (24) at (-4.25, 0.5) {};
		\node [style=none] (25) at (-4.75, 2) {};
		\node [style=none] (26) at (-9, -1) {};
		\node [style=none] (27) at (-8.75, -2.25) {};
		\node [style=none] (28) at (-4.5, -2.5) {};
		\node [style=none] (29) at (-4, -0.75) {};
		\node [style=none] (30) at (-8, 2) {};
		\node [style=none] (31) at (-9.25, 1.75) {};
		\node [style=none] (32) at (11.25, -2.25) {};
		\node [style=none] (33) at (11.25, -1) {};
		\node [style=none] (34) at (7.25, -1) {};
		\node [style=none] (35) at (7.25, -2.25) {};
		\node [style=none] (36) at (11.25, 1) {};
		\node [style=none] (37) at (11.25, 2.25) {};
		\node [style=none] (38) at (7.25, 2.25) {};
		\node [style=none] (39) at (7.25, 1) {};
		\node [style=none] (40) at (10, -2) {};
		\node [style=none] (41) at (11.25, -2) {};
		\node [style=none] (42) at (11.25, 2) {};
		\node [style=none] (43) at (10, 2) {};
		\node [style=none] (44) at (7, -2) {};
		\node [style=none] (45) at (8.25, -2) {};
		\node [style=none] (46) at (8.25, 2) {};
		\node [style=none] (47) at (7, 2) {};
		\node [style=new style 0,scale=0.65] (48) at (-11.75, 1.5) {$v_1$};
		\node [style=new style 0,scale=0.75] (49) at (-11.75, -1.5) {$v_3$};
		\node [style=new style 0,scale=0.75] (50) at (-11.75, 0) {$v_2$};
		\node [style=new style 0,scale=0.75] (51) at (-10.25, -1.5) {$v_4$};
		\node [style=new style 0,scale=0.75] (52) at (-8.67, -1.67) {$v_5$};
		\node [style=new style 0,scale=0.75] (53) at (-8.75, 1.5) {$v_7$};
		\node [style=new style 0,scale=0.75] (54) at (-10, 1.75) {$v_8$};
		\node [style=new style 0,scale=0.75] (55) at (-7.75, 0.25) {$v_6$};
		\node [style=new style 0,scale=0.75] (56) at (-5.5, 0.25) {$u_6$};
		\node [style=new style 0,scale=0.75] (57) at (-4.5, 1.25) {$u_7$};
		\node [style=new style 0,scale=0.75] (58) at (-3.25, 1) {$u_8$};
		\node [style=new style 0,scale=0.75] (59) at (-1.75, 1.25) {$u_1$};
		\node [style=new style 0,scale=0.75] (60) at (-1.5, -0.25) {$u_2$};
		\node [style=new style 0,scale=0.75] (61) at (-1.75, -1.5) {$u_3$};
		\node [style=new style 0,scale=0.75] (62) at (-3, -1.75) {$u_4$};
		\node [style=new style 0,scale=0.75] (63) at (-4.5, -1.75) {$u_5$};
		\node [style=new style 0] (64) at (7.5, 1.5) {};
		\node [style=new style 0] (65) at (9.25, 1.75) {};
		\node [style=new style 0] (66) at (10.75, 1.5) {};
		\node [style=new style 0] (67) at (10.75, 0) {};
		\node [style=new style 0] (68) at (10.5, -1.5) {};
		\node [style=new style 0] (69) at (9.25, -1.5) {};
		\node [style=new style 0] (70) at (7.75, -1.5) {};
		\node [style=new style 0] (71) at (7.5, 0) {};
		\node [style=none] (72) at (-7, -4) {};
		\node [style=none] (73) at (9, -4) {};
		\node [style=none] (74) at (-7, -4) {$H$};
		\node [style=none] (75) at (9, -4) {$\Bar{H}$};
		\node [style=none] (76) at (7.5, 1.5) {$1$};
		\node [style=none] (77) at (7.5, 0) {$2$};
		\node [style=none] (78) at (7.75, -1.5) {$3$};
		\node [style=none] (79) at (9.25, -1.5) {$4$};
		\node [style=none] (80) at (10.5, -1.5) {$5$};
		\node [style=none] (81) at (10.75, 0) {$6$};
		\node [style=none] (82) at (10.75, 1.5) {$7$};
		\node [style=none] (83) at (9.25, 1.75) {$8$};
		\node [style=none,scale=0.75] (84) at (-8.75, 1.5) {};
		\node [style=none] (85) at (9, -2.75) {e};
		\node [style=none] (86) at (11.75, 0) {f};
		\node [style=none] (87) at (9.25, 3) {g};
		\node [style=none] (88) at (6.5, 0) {h};
		\node [style=none] (89) at (-10.25, -2.75) {$e_2$};
		\node [style=none] (90) at (-6.25, 1) {$f_2$};
		\node [style=none] (91) at (-10, 3) {$g_2$};
		\node [style=none] (92) at (-13, 0) {$h_2$};
		\node [style=none] (93) at (-7, -2.25) {$f_1$};
		\node [style=none] (94) at (-3, -3) {$e_1$};
		\node [style=none] (95) at (-0.5, -0.25) {};
		\node [style=none] (96) at (-0.1, -0.25) {$h_1$};
		\node [style=none] (97) at (-3.25, 2.75) {$g_1$};
	
		\draw (0.center)
			 to [bend right=60] (1.center)
			 to [bend right, looseness=0.50] (2.center)
			 to [bend right=60] (3.center)
			 to [bend right=15, looseness=0.50] cycle;
		\draw (4.center)
			 to [bend right=60] (5.center)
			 to [bend right, looseness=0.50] (6.center)
			 to [bend right=60] (7.center)
			 to [bend right=15, looseness=0.50] cycle;
		\draw (9.center)
			 to [bend right, looseness=0.50] (10.center)
			 to [bend right=60] (11.center)
			 to [bend right=15, looseness=0.50] (8.center)
			 to [bend right=60] cycle;
		\draw (12.center)
			 to [bend right=60] (13.center)
			 to [bend right, looseness=0.50] (14.center)
			 to [bend right=60] (15.center)
			 to [bend right=15, looseness=0.50] cycle;
		\draw [bend right=60] (16.center) to (17.center);
		\draw [bend right, looseness=0.50] (17.center) to (18.center);
		\draw [bend right=60] (18.center) to (19.center);
		\draw [bend right=15, looseness=0.50] (19.center) to (16.center);
		\draw (21.center)
			 to [bend right, looseness=0.50] (22.center)
			 to [bend right=60] (23.center)
			 to [bend right=15, looseness=0.50] (20.center)
			 to [bend right=60] cycle;
		\draw (25.center)
			 to [bend left=15] (26.center)
			 to [bend right=60] (27.center)
			 to [bend right=15, looseness=0.50] (24.center)
			 to [bend right=60] cycle;
		\draw [bend left=45, looseness=0.75] (29.center) to (30.center);
		\draw [bend right=60] (30.center) to (31.center);
		\draw [bend right=60, looseness=0.75] (31.center) to (28.center);
		\draw [bend right=60] (28.center) to (29.center);
		\draw (33.center)
			 to [bend right, looseness=0.50] (34.center)
			 to [bend right=60] (35.center)
			 to [bend right=15, looseness=0.50] (32.center)
			 to [bend right=60] cycle;
		\draw (37.center)
			 to [bend right, looseness=0.50] (38.center)
			 to [bend right=60] (39.center)
			 to [bend right=15, looseness=0.50] (36.center)
			 to [bend right=60] cycle;
		\draw (41.center)
			 to [bend right, looseness=0.50] (42.center)
			 to [bend right=60] (43.center)
			 to [bend right=15, looseness=0.50] (40.center)
			 to [bend right=60] cycle;
		\draw (45.center)
			 to [bend right, looseness=0.50] (46.center)
			 to [bend right=60] (47.center)
			 to [bend right=15, looseness=0.50] (44.center)
			 to [bend right=60] cycle;
	
\end{tikzpicture}
    \caption{Covering projection from $H$ to $\Bar{H}$ with $|E(H)|\ne |E(\Bar{H})|$}
    \label{fig:8-cycle-proj-4-cycle}
\end{figure}
 Let $H$, and $H'$ be a hypergraph (see the \Cref{fig:8-cycle-proj-4-cycle}) with
    $$V(H)=\{u_1,v_1,u_2,v_2,u_3,v_3,u_4,v_4,u_5,v_5,u_6,v_6,u_7,v_7,u_8,v_8\}\text{~and~}V(\Bar{H})=\{1,2,3,4,5,6,7,8\},$$
    $$E(H) =\{e_1,e_2,f_1,f_2,g_1,g_2,h_1,h_2\},\text{~and~} E(\Bar{H})=\{e,f,g,h\}.$$
    Here $e_1=\{u_3,u_4,u_5\}$, $e_2=\{v_3,v_4,v_5\}$, $f_1=\{u_5,v_6,v_7\}$, $f_2=\{v_5,u_6,u_7\}$, $g_1=\{u_1,u_7,u_8\}$, $g_2=\{v_1,v_7,v_8\}$, $h_1=\{u_1,u_2,u_3\}$, $h_2=\{v_1,v_2,v_3\}$, and $ e=\{3,4,5\}$, $f=\{5,6,7\}$, $g=\{1,7,8\}$, $h=\{1,2,3\}$. The map $f:V(H)\to V(\Bar{H})$, defined by $ u_i\mapsto i$, and $v_i\mapsto i$ for all $i=1,2,,4,5,6,7,8$, is a covering projection from $H$ to $\Bar{H}$.
\end{exm}
A covering projection $f:V(H)\to V(\Bar{H})$ is called \emph{cardinality preserving} if $ |e|=|\Bar{f}(e)|$ for all $e\in E(H)$.
\begin{thm}
    Let $H$ and $\Bar{H}$ be two hypergraphs, and $f: V(H)\to V(\Bar{H})$ be a cardinality preserving covering projection. If $U\subseteq V(\bar{H})$ is a linearly dependent set of vertices in $\Bar{H}$, then $f^{-1}(U)$ is a linearly dependent set of vertices in $H$.
\end{thm}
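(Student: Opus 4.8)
The plan is to reduce everything to \Cref{Ix=0_ldv}: to show that $f^{-1}(U)$ is linearly dependent in $H$ it suffices to produce a non-zero function $x:V(H)\to\mathbb{R}$ with $supp(x)\subseteq f^{-1}(U)$ and $I_H^Tx=0$. Since $U$ is linearly dependent in $\bar H$, \Cref{Ix=0_ldv} supplies a non-zero coefficient vector $c_U:V(\bar H)\to\mathbb{R}$ with $supp(c_U)\subseteq U$ and $I_{\bar H}^Tc_U=0$; equivalently, $\sum_{\bar v\in\bar e}c_U(\bar v)=0$ for every $\bar e\in E(\bar H)$. The natural candidate is the pullback $x:=c_U\circ f$, that is, $x(v)=c_U(f(v))$ for all $v\in V(H)$.

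The easy bookkeeping comes first. Because $c_U\neq 0$, some $\bar v\in V(\bar H)$ has $c_U(\bar v)\neq 0$, and surjectivity of $f$ gives $v\in V(H)$ with $f(v)=\bar v$, so $x(v)\neq 0$ and $x$ is non-zero. Moreover, if $x(v)\neq 0$ then $f(v)\in supp(c_U)\subseteq U$, hence $v\in f^{-1}(U)$; thus $supp(x)\subseteq f^{-1}(U)$.

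The crux is verifying $I_H^Tx=0$, i.e. $\sum_{v\in e}x(v)=0$ for each $e\in E(H)$. Fix $e$ and set $\bar e:=\bar f(e)=\{f(v):v\in e\}$, which is a hyperedge of $\bar H$ since $f$ is a hypergraph homomorphism. By construction $v\mapsto f(v)$ maps $e$ onto $\bar e$, and the cardinality-preserving hypothesis gives $|e|=|\bar e|$, so this surjection between finite sets of equal size is a bijection $e\to\bar e$. Re-indexing the sum along this bijection,
\[(I_H^Tx)(e)=\sum_{v\in e}x(v)=\sum_{v\in e}c_U(f(v))=\sum_{\bar v\in\bar e}c_U(\bar v)=0,\]
the last equality being the coefficient relation for $\bar H$ evaluated at $\bar e$. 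Hence $x$ is a non-zero element of the null space of $I_H^T$ with $supp(x)\subseteq f^{-1}(U)$, and \Cref{Ix=0_ldv} yields that $f^{-1}(U)$ is a linearly dependent set of vertices in $H$.

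I expect the cardinality-preserving assumption to be the load-bearing hypothesis and the only genuine obstacle: without it $f|_e$ could identify distinct vertices of $e$, and then pulling back $c_U$ would over-count those coefficients, so $\sum_{v\in e}x(v)$ would acquire multiplicities and need not vanish. It is worth noting that the bijectivity of the star restrictions $\bar f|_{E_v(H)}$ built into the definition of a covering projection is not actually needed for this implication; a surjective, cardinality-preserving hypergraph homomorphism already suffices.
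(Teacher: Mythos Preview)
Your proof is correct and follows essentially the same approach as the paper: pull back the coefficient vector via $x=c_U\circ f$ and use $|e|=|\bar f(e)|$ to turn $\sum_{v\in e}c_U(f(v))$ into $\sum_{\bar v\in\bar e}c_U(\bar v)=0$, then invoke \Cref{Ix=0_ldv}. Your write-up is in fact more careful than the paper's, since you explicitly check that $x\neq 0$ (using surjectivity of $f$) and that $supp(x)\subseteq f^{-1}(U)$, and your closing observation that the star-bijection part of the covering-projection hypothesis is never used is correct and worth noting.
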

\begin{proof}
   Since $U\subseteq V(\bar{H})$ is a linearly dependent set of vertices in $\Bar{H}$, by the \Cref{Ix=0_ldv} $I_{\bar{H}}^T\bar x=0$ for a non-zero vector $\bar x:V(\bar{H})\to\mathbb{R}$ such that $supp(\bar x)\subseteq U$. Consider the vector $x:V(H)\to\mathbb{R}$ defined as $ x(u)=\bar x(f(u)) $ for all $u\in V(H)$. Since $I_{\bar{H}}^T\bar x=0$, and $ |e|=|\Bar{f}(e)|$ for all $e\in E(H)$, we have $(I_H^Tx)(e)= \sum\limits_{u\in e}x(u)=\sum\limits_{f(u)\in \bar{f}(e)}\bar{x}(\Bar{f}(u))=(I_{\Bar{H}}^T\bar x)(\bar{f}(e))=0$ for all $e\in E(H)$. Therefore, $I_H^Tx$ with $supp(x)\subseteq f^{-1}(U)$. Consequently, by the \Cref{Ix=0_ldv} $f^{-1}(U)$ is a linearly subset of vertices in $H$.
\end{proof}

\section{Applications}
\subsection{Spectra of hypergraph matrices}\label{app-matrices}
Given a hypergraph $H$, the linearly dependent vertices and hyperedges correspond to linearly dependent rows and columns in the incidence matrix $I_H$, respectively. Since other hypergraph matrices are related to the incidence matrix $I_H$, the spectra of these matrices contain the traces of these linear dependencies.
In the case of a graph $G$, the product of the incidence matrix and its transpose $I_GI_G^T$ is called the signless Laplacian matrix associated with $G$ \cite[Section-2, Equation-1]{signless_cvetcovic}. For a hypergraph $H$, let $w_V:V(H)\to(0,\infty)$, and $w_E:E(H)\to(0,\infty)$ be weight functions on vertices and hypergraphs. The signless Laplacian matrix $Q_H=[q_{uv}]_{u,v\in V(H)}$ of a hypergraph $H$ is a matrix, with its rows and columns indexed by $V(H)$, and $q_{uv}=\sum\limits_{e\in E(H)}w_V(u)i_{ue}i_{ve}w_E(e)$. It follows from the definition that $Q_H=D_VI_HD_EI_H^T$, where $D_V$ is a diagonal matrix whose rows and columns are indexed by $V(H)$ and the $(v,v)$-th entry is $w_V(v)$ for all $v\in V(H)$. Similarly, $D_E$ is a diagonal matrix with its rows and columns indexed by $E(H)$ with its $(e,e)$-th entry is $w_E(e)$ for all $e\in E(H)$. 

\begin{exm}\rm
    Different choices of $w_V$ and $w_E$ lead to different variations of the incidence matrix $Q_H$.

    1. If $w_V(v)=1$ for all $v\in V(H)$, and $w_E(e)=1$ for all $e\in E(H)$, then $Q_H=I_HT_H^T$, the signless Laplacian matrix considered by Cardoso and Trevisan in \cite{Trevisan_signless}.
    
    2. In \cite{my1st}, two positive valued function $\delta_{V(H)}:V(H)\to(0,\infty)$, and $\delta_{E(H)}:E(H)\to(0,\infty)$ are used to define the signless Laplacian operator $\mathcal{Q}$. If we set $w_V(v)=\frac{1}{\delta_{V(H)}(v)}$, and $W_E(e)=\frac{\delta_{E(H)}(e)}{|e|^2}$ for all $v\in V(H), e\in E(H)$, then $Q_H$ becomes the matrix representation of $\mathcal{Q}$ with respect to the basis $\{\mathbf{1}_v:v\in V(H)\}$, where $\mathbf{1}_v:V(H)\to\{0,1\} $ is such that $\mathbf{1}_v(u)=1$ if $u=v$, and otherwise $\mathbf{1}_v(u)=0$.
\end{exm}
In the next theorem, we show that a linearly dependent set of vertices corresponds to a $0$ eigenvalue of $Q_H$.
\begin{prop}\label{q-ld}
    Let $H$ be a hypergraph. If $U$ is a linearly dependent set of vertices in $H$, with a coefficient vector $c_U$, then $Q_Hc_U=0$. 
\end{prop}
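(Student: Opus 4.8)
The plan is to exploit the factorization $Q_H = D_V I_H D_E I_H^T$ recorded just before the statement, together with \Cref{Ix=0_ldv}. First I would recall that, by definition of a coefficient vector, $U=\{v_1,\dots,v_k\}$ being linearly dependent with coefficient vector $c_U$ means precisely $\sum_{i=1}^{k} c_U(v_i) s_{v_i} = 0$, and that $supp(c_U)\subseteq U$; as established in the discussion preceding \Cref{Ix=0_ldv} (and in its proof), this is equivalent to $I_H^T c_U = 0$. So the single substantive input is $I_H^T c_U = 0$.

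Next I would simply compute, using associativity of matrix–vector multiplication,
\[
Q_H c_U = \bigl(D_V I_H D_E I_H^T\bigr) c_U = D_V I_H D_E \bigl(I_H^T c_U\bigr) = D_V I_H D_E \, 0 = 0 .
\]
If one prefers to avoid the enumeration convention and argue entrywise, I would instead observe that for every $e\in E(H)$, $(I_H^T c_U)(e) = \sum_{v\in e} c_U(v) = 0$, hence $\bigl(D_E I_H^T c_U\bigr)(e) = w_E(e)\cdot 0 = 0$, and then $\bigl(I_H D_E I_H^T c_U\bigr)(u) = \sum_{e\in E(H)} i_{ue}\,\bigl(D_E I_H^T c_U\bigr)(e) = 0$ for every $u\in V(H)$, so applying the diagonal $D_V$ still leaves the zero vector; this gives $Q_H c_U = 0$.

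There is essentially no obstacle here: the result is an immediate consequence of the decomposition $Q_H = D_V I_H D_E I_H^T$ and \Cref{Ix=0_ldv}. The only point worth stating carefully is that the positivity of the weights plays no role — the conclusion holds because $I_H^T c_U = 0$ is annihilated before the weights enter — so the same argument would show $Q_H c_U = 0$ for \emph{any} choice of $w_V, w_E$. I would also remark in passing that this gives another proof that a linearly dependent set of vertices forces $\det Q_H = 0$, complementing \Cref{0-ev-ld}.
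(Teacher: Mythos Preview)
Your proof is correct and follows exactly the paper's approach: invoke \Cref{Ix=0_ldv} to obtain $I_H^T c_U=0$, then use the factorization $Q_H=D_V I_H D_E I_H^T$ to conclude $Q_H c_U=0$. The additional entrywise verification and remarks you include are fine but not needed.
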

\begin{proof}Since $U$ is a linearly dependent set of vertices in $H$, with a coefficient vector $c_U$, by \Cref{Ix=0_ldv}, $I_H^Tc_U=0$. Therefore, $Q_Hc_U=Q_H=D_VI_HD_EI_H^Tc_U=0$.
\end{proof}
The converse of the above result is true for the signless laplacian described in \cite{Trevisan_signless}, that is, when $Q_H=I_HI_H^T$. In that case, if $Q_Hx=0$ for some non-zero vector, then $x^TI_HI_H^Tx=0$, and thus $I_H^Tx=0$. Therefore, by the \Cref{Ix=0_ldv}, the $supp(x)$ is a linearly dependent set of vertices. Since each unit $W_E$ with $|W_E|>1$ is a linearly dependent set of vertices by the \Cref{q-ld}, for any two distinct vertices $u,v\in W_E$ the equality $Q_Hx_{uv}=0$, and $ S_{W_E}$ lies in the null space of $Q_H$. This fact also follows from a Theorem in \cite{unit}. If the disjoint subset of vertices $U$ and $V$ forms an equal partitions of hyperedges in the hypergraph $H$, then $W=U\cup V$ is a linearly dependent set of vertices with coefficient vector $\chi_U-\chi_V$. Therefore, by the \Cref{q-ld}, $Q_H(\chi_U-\chi_V)=0$.

Given any hypergraph $H$, suppose that $A_H=[a_{uv}]_{u,v\in V(H)}$ is a matrix whose rows and columns are indexed by the vertex set $V(H)$. For all $u\in V(H)$, the diagonal entry $a_{uu}=0$, and for distinct $u,v\in V(H)$, the $(u,v)$-th entry $a_{uv}=q_{uv}$, the $(u,v)$-th entry of $Q_H$.
We refer to the matrix $A_H$ as the adjacency matrix of $H$. 
\begin{exm}\rm Followings are the different variations of hypergraph adjacency due to specific choices of $w_V$ and $w_E$.

1.If $w_V(v)=1$ for all $v\in V(H)$, and $w_E(e)=1$ for all $e\in E(H)$, the hypergraph adjacency becomes the adjacency matrix described in \cite{feng1996spectra}.

2.If $w_V(v)=1$ for all $v\in V(H)$, and $w_E(e)=\frac{1}{|e|-1}$ for all $e\in E(H)$, the hypergraph adjacency becomes the adjacency matrix described in \cite{hg-mat}. If instead of the previous vertex weight, we set $w_V(v)=\frac{1}{|E_v(H)|}$ for all $v\in V(H)$, then for the same choice of $w_E$, the hypergraph adjacency becomes the normalized adjacency matrix described in \cite{hg-mat}.
\end{exm}
In the next theorem, we show that the effects of specific linearly dependent sets of vertices are also manifested in the Adjacency spectra of a hypergraph $H$.
\begin{cor}\label{a-ld}
    Let $H$ be a hypergraph. For any linearly dependent vertices $U\subseteq V(H)$, if $w_V(u)\sum\limits_{e\in E_u(H)}w_E(e)=c_U$, a constant for all $u\in U$, then $-c_U$ is an eigenvalue of $A_H$.
\end{cor}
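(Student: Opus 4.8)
The plan is to exhibit $A_H$ as a diagonal shift of the signless Laplacian $Q_H$ and then invoke \Cref{q-ld}. Writing $Q_H=[q_{uv}]_{u,v\in V(H)}$ and using that every incidence entry $i_{ue}$ lies in $\{0,1\}$, the diagonal entry is $q_{uu}=\sum_{e\in E(H)}w_V(u)i_{ue}^2w_E(e)=w_V(u)\sum_{e\in E_u(H)}w_E(e)$. By the definition of $A_H$ (it agrees with $Q_H$ off the diagonal and is $0$ on the diagonal), this yields $Q_H=A_H+D$, where $D$ is the diagonal matrix whose $(u,u)$-entry is $w_V(u)\sum_{e\in E_u(H)}w_E(e)$.

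Next I would fix a coefficient vector of the linearly dependent set $U$, which I shall call $x:V(H)\to\mathbb{R}$ (renaming so as not to collide with the constant $c_U$ in the statement). By construction $x\neq 0$ and $supp(x)\subseteq U$, and \Cref{q-ld} gives $Q_Hx=0$, hence $A_Hx=-Dx$. It then remains to check that $Dx=c_Ux$: if $v\in supp(x)$ then $v\in U$, so the $(v,v)$-entry of $D$, namely $w_V(v)\sum_{e\in E_v(H)}w_E(e)$, equals $c_U$ by hypothesis, whence $(Dx)(v)=c_Ux(v)$; and if $v\notin supp(x)$ then both sides vanish. Therefore $A_Hx=-Dx=-c_Ux$ with $x$ non-zero, so $-c_U$ is an eigenvalue of $A_H$ with eigenvector $x$.

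This is essentially a one-line computation once \Cref{q-ld} is in hand, so I do not expect a real obstacle; the only points that want care are the notational overlap between the \emph{coefficient vector} $c_U$ of \Cref{q-ld} and the \emph{constant} $c_U$ of the statement (handled by the rename above), and the observation that the hypothesis is precisely what is needed for the diagonal correction $D$ to act on the support of $x$ as multiplication by the scalar $c_U$ — which suffices exactly because $supp(x)\subseteq U$.
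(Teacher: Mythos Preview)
Your proof is correct and mirrors the paper's own argument essentially line for line: write $A_H=Q_H-D_H$ with $D_H$ the diagonal matrix of the quantities $w_V(v)\sum_{e\in E_v(H)}w_E(e)$, apply \Cref{q-ld} to a coefficient vector $x$ of $U$ to get $Q_Hx=0$, and then use the hypothesis (together with $supp(x)\subseteq U$) to conclude $D_Hx=c_Ux$, hence $A_Hx=-c_Ux$. Your explicit handling of the notational clash between the coefficient vector and the constant $c_U$ is a nice touch that the paper sidesteps simply by also calling the vector $x$.
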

\begin{proof}
Consider the diagonal matrix $D_H$, whose rows and columns are indexed by $V(H)$, and the $v$-th diagonal entry of the matrix is $w_V(v)\sum\limits_{e\in E_v(H)}w_E(e)$. The adjacency matrix is $A_H=Q_H-D_H$.
    Since $U$ is a linearly dependent set of vertices, there exists a non-zero coefficient vector $x:V(H)\to\mathbb{R}$ such that $\sum\limits_{u\in U}x(u)s_u=0$ and $x(v)=0$ for all $v\notin U$. Therefore, by the \Cref{q-ld}, $Q_Hx=0$. The condition $w_V(u)\sum\limits_{e\in E_u(H)}w_E(e)=c_U$, leads us to $D_Hx=c_Ux $. Therefore $A_Hx=-c_Ux$. This completes the proof.
\end{proof}
Suppose that $W_E$ is a unit in a hypergraph $H$ with $|W_E|>1$. suppose that  $w_V(v)=w$, a constant for all $v\in W_E$.
Since a unit $W_E$ with $|W_E|>1$ is a set of linearly dependent vertices, and $w_V(v)=w$ for all $v\in W_E$ leads to $w_V(u)\sum\limits_{e\in E_v(H)}w_E(e)=w\sum\limits_{e\in E}w_E(e)$, a constant for all $v\in W_E$, the by \Cref{a-ld}, if $W_E\in\mathfrak{U}(H)$ with $|W_E|>1$ and $w_V(v)=w$ for all $v\in W_E$, then $-w \sum\limits_{e\in E}w_E(e)$ is an eigenvalue of $A_H$. This fact is proved independently in \cite{unit}. If two disjoint collections of vertices $U$ and $V$ form an equal partition of hyperedges in the hypergraph $H$, then since $W=U\cup V $ is a linearly dependent set of vertices in $V(H)$. Thus, by the \Cref{a-ld}, if $w_V(u)\sum\limits_{e\in E_u(H)}w_E(e)=c$, then $-c$ is an eigenvalue of $A_H$.
\begin{exm}\rm   Let $H$ be a hypergraph and $W_E\in\mathfrak{U}(H)$ with $|W_E|>1$.
  
    1. If $w_V(v)=1$ for all $v\in V(H)$, and $w_E(e)=1$ for all $e\in E(H),  $ then $-|E|$ is an eigenvalue of $A_H$.
    
    2. If $w_V(v)=1$ for all $v\in V(H)$, and $w_E(e)=\frac{1}{|e|-1}$ for all $e\in E(H),  $ then $-\sum\limits_{e\in E}\frac{1}{|e|-1}$ is an eigenvalue of $A_H$.

    3. If $w_V(v)=\frac{1}{|E_v(H)|}$, for all $v\in V(H)$, and $w_E(e)=\frac{1}{|e|-1}$ for all $e\in E(H),  $ then $-\frac{1}{|E|}\sum\limits_{e\in E}\frac{1}{|e|-1}$ is an eigenvalue of $A_H$.
\end{exm}
In the case of a graph $G$, the graph Laplacian is $K_G-A_G$, where $K_G$ is the degree matrix of $G$, that is, a diagonal matrix whose row and columns are indexed by $V(G)$ and the $v$-th diagonal entry  is the degree of $v$. Similarly for a hypergraph $H$ the Laplacian matrix  $L_H=[l_{uv}]_{u,v\in V(G)}$ is defined by $L_H=K_H-A_H$, where $K_H=[r_{uv}]_{u,v\in V(H)}$ is a diagonal matrix whose row and columns are indexed by $V(H)$ is such that the $v$-th diagonal entry  
$r_{vv}=(A_H\chi_{V(H)})(v)$, where $\chi_{V(H)}:V(H)\to\mathbb{R}$ is such that $\chi_{V(H)}(v)=1$ for all $v\in V(H)$. Since $L_H=K_H+D_H-Q_H$, we have the following Corollary of the \Cref{q-ld}.
\begin{cor}
    Let $H$ be a hypergraph. If $U$ is a linearly dependent set of vertices in $H$, and $$w_V(v)\left(\sum\limits_{u\in V(H)}\sum\limits_{e\in E_u(H)\cap E_v(H)}w_E(e)\right)=c
_U,\text{~a constant, for all }u\in U,$$
then $c_u$ is an eigenvalue of $L_H$.
\end{cor}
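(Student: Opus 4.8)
The plan is to exploit the decomposition $L_H = K_H + D_H - Q_H$ recorded just before the statement, together with what we already know about the action of $Q_H$ on coefficient vectors. Since $U$ is linearly dependent, \Cref{Ix=0_ldv} gives a non-zero vector $x\colon V(H)\to\mathbb{R}$ with $\mathrm{supp}(x)\subseteq U$ and $I_H^Tx=0$ (I use the letter $x$ for the coefficient vector, since in this corollary the symbol $c_U$ already denotes the prospective eigenvalue). By \Cref{q-ld} we get $Q_Hx=0$. Hence $L_Hx=(K_H+D_H)x$, and it remains only to show that $(K_H+D_H)x=c_Ux$.

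The key computation is to identify the diagonal entries of $K_H+D_H$. First, because $i_{ve}^2=i_{ve}$, the $(v,v)$-entry of $Q_H$ is $q_{vv}=w_V(v)\sum_{e\in E_v(H)}w_E(e)$, which is exactly the $v$-th diagonal entry of $D_H$; so $(D_H)_{vv}=q_{vv}$. Next, using $(A_H)_{vv}=0$ and $(A_H)_{vu}=q_{vu}$ for $u\neq v$, the $v$-th diagonal entry of $K_H$ is $r_{vv}=(A_H\chi_{V(H)})(v)=\sum_{u\neq v}q_{vu}$. Adding these, $(K_H+D_H)_{vv}=\sum_{u\in V(H)}q_{vu}$, and expanding $q_{vu}=\sum_{e\in E(H)}w_V(v)i_{ve}i_{ue}w_E(e)$ while noting $i_{ve}i_{ue}=1$ precisely when $e\in E_u(H)\cap E_v(H)$, we obtain
\[
(K_H+D_H)_{vv}=w_V(v)\left(\sum_{u\in V(H)}\ \sum_{e\in E_u(H)\cap E_v(H)}w_E(e)\right).
\]
This is exactly the quantity appearing in the hypothesis, so $(K_H+D_H)_{vv}=c_U$ for every $v\in U$.

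Finally, since $K_H+D_H$ is diagonal and $\mathrm{supp}(x)\subseteq U$, for $v\in U$ we have $((K_H+D_H)x)(v)=c_Ux(v)$, and for $v\notin U$ both sides vanish; hence $(K_H+D_H)x=c_Ux$. Combining with $Q_Hx=0$ yields $L_Hx=(K_H+D_H)x-Q_Hx=c_Ux$, and as $x\neq 0$, $c_U$ is an eigenvalue of $L_H$. The only place requiring any care is the diagonal identity $(K_H+D_H)_{vv}=\sum_{u\in V(H)}q_{vu}$, i.e. verifying $(D_H)_{vv}=q_{vv}$ via $i_{ve}^2=i_{ve}$ and $(K_H)_{vv}=\sum_{u\ne v}q_{vu}$ via $(A_H\chi_{V(H)})(v)$; everything else is a direct substitution into the already-established $L_H=K_H+D_H-Q_H$ and $Q_Hx=0$, so I do not expect a genuine obstacle here.
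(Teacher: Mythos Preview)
Your proof is correct and follows essentially the same route as the paper: take a non-zero coefficient vector $x$ supported on $U$, use \Cref{q-ld} to get $Q_Hx=0$, and then observe that the diagonal entries of $K_H+D_H$ coincide with the expression in the hypothesis, so $(K_H+D_H)x=c_Ux$ on $\mathrm{supp}(x)\subseteq U$. The only difference is that the paper simply asserts the formula for the $v$-th diagonal entry of $K_H+D_H$, whereas you spell out the verification via $i_{ve}^2=i_{ve}$ and $(A_H\chi_{V(H)})(v)=\sum_{u\ne v}q_{vu}$; this extra detail is welcome but not a different approach.
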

\begin{proof}
     Since $U$ is a linearly dependent set of vertices, there exists a non-zero coefficient vector $x:V(H)\to\mathbb{R}$ such that $\sum\limits_{u\in U}x(u)s_u=0$ and $x(v)=0$ for all $v\notin U$. Therefore, by the \Cref{q-ld}, $Q_Hx=0$. The $v$ the diagonal entry of the diagonal matrix $K_H+D_H$ is $w_V(v)\left(\sum\limits_{u\in V(H)}\sum\limits_{e\in E_u(H)\cap E_v(H)}w_E(e)\right)$. Therefore the condition $w_V(v)\left(\sum\limits_{u\in V(H)}\sum\limits_{e\in E_u(H)\cap E_v(H)}w_E(e)\right)=c
_U$ leads to $L_Hx=c_Ux$. This completes the proof.
\end{proof}
\subsection{Random walk on hypergraphs}\label{rwh}
Let $H$ be a hypergraph with $V(H)$.
By \cite[Theorem 8.1]{Billingsley}, a stochastic matrix $P=\left[P_{uv}\right]_{u,v\in V(H)}$ indexed by $V(H)$, and the collection $\{q_v\in[0,1]:v\in V(H)\}$ with $\sum\limits_{v\in V(H)}q_v=1$ leads us to a  probability space $(\Omega,\mathfrak{F},\Pr)$, and a  \emph{random walk} $\mathcal{X}=\{X_t:t\in \mathbb{T}\}$ on that probability space. Here $\Omega=V(H)^{\mathbb{T}}$, the collection of all the functions $\omega:\mathbb{T}\to V(H)$ with domain $\mathbb{T} =\mathbb{N}\cup \{0\}$, and co-domain $V(H)$. The collection of events $\mathfrak{F}$ is the smallest $\sigma$-algebra containing the collection $C=\bigcup_{t\in\mathbb{T}}C_t$, where $C_t=\{\{x:x(0)=v_{i_0},\ldots,x(t)=v_{i_t}\}\text{~for all~}v_{i_0},\ldots,v_{i_t}\in V(H)\}$, and the probability, $\Pr:\mathfrak{F}\to[0,1]$ is such that $\Pr(\{x:x(0)=v_0,\ldots,x(t)=v_t\})=q_{v_0}P_{v_0v_1}\ldots P_{v_{t-1}v_t}$. For all $t\in\mathbb{T}$, the function $X_t:\Omega\to V(H)$ such that $X_t(w)=v$ if $w(t)=v$ for all $v\in V(H)$. We denote the event $X_t^{-1}(v)=\{\omega\in\Omega:\omega(t)=v\}$ as $(X_t=v) $. The stochastic matrix $P$ is called the \emph{probability transition matrix} of the random walk $\mathcal{X}$, and $\Pr(X_t=v|X_{t-1}=u)=P_{uv}$ for all $t\in \mathbb{N}$, for all $u,v\in V(H)$. The initial distribution is given by $\Pr(X_0=v)=q_v$ for all $v\in V(H)$.
 We refer the readers to \cite{Billingsley,grimmett2020probability} for more details on the preliminaries of random walk. A transition from a vertex $u$ to $v$ corresponds to two intermediate transitions. The first one is $u$ to $e$, where $e\in E_u(H)$, and the second one is $e$ to $v$, for some $v\in e$. Thus, $P_{uv}=\sum\limits_{e\in E_u(H)\cap E_v(H)}r_{ue}s^u_{ev}$ with $\sum\limits_{e\in E_u(H)}r_{ue}=1$, and $\sum\limits_{v\in e}s^u_{ev}=1$ for all $u\in V(H) $, and $e\in E(H)$.  
 We refer to the distribution $\{r_{ue}:e\in E_u(H)\}$ as the \emph{hyperedge distribution} on the vertex $u$, and the distribution $\{s^u_{ev}:v\in e\}$ as the \emph{vertex distribution of a hyperedge $e(\in E_u(H))$ on $u$}. 
 The probability distribution is called uniform if  $r_{ue}=r_{ue'} $ for all $e,e'\in E_u(H)$, and $s^u_{ev}=s^u_{ev'} $ for all $v,v'\in e\setminus\{u\}$. A random walk is said to be \emph{non-lazy }if $s^u_{eu}=0$ for all $u\in V(H)$. Otherwise, it is called a \emph{lazy} random walk
 
 \begin{exm}\label{ex-uniform-rand}\rm
     1. In \cite{hg-mat}, a normalized adjacency matrix $\mathcal{A}=\left(\mathfrak{a}_{uv}\right)_{u,v\in V(H)}$ is used as the probability transition matrix, where $\mathfrak{a}_{uv}=\frac{1}{|E_u(H)|}\sum\limits_{e\in E_u(H)\cap E_v(H)}\frac{1}{|e|-1}$ for two distinct $u,v\in V(H)$, diagonal entry $\mathfrak{a}_{uu} =0$ for all $u\in V(H)$. Since the diagonal entries are $0$, the random walk is \emph{non-lazy}, that is $\Pr(X_t=u|X_{t-1}=u)=0$ for all $u\in V(H)$ and for all $t\in \mathbb{T}$. Here $ r_{ue}=\frac{1}{|E_u(H)|}$ for all $e\in E_u(H)$, the vertex distribution probability is $ s_{ev}^u=\frac{1}{|e|-1}$ for all $v(\ne u)\in e$, and $ s_{eu}^u=0$. Thus, the vertex distribution and hyperedge distributions are uniform.

     2. Similarly, in a lazy random walk, with uniform vertex and hyperedge distribution on a hyperedge $H$, we have $ r_{ue}=\frac{1}{|E_u(H)|}$ for all $e\in E_u(H)$, the vertex distribution probability is $ s_{ev}^u=\frac{1}{|e|}$ for all $v\in e$. Therefore, the transition probabilities are $P_{uv}=\frac{1}{|E_u(H)|}\sum\limits_{e\in E_u(H)\cap E_v(H)}\frac{1}{|e|-1}$ for all $u,v\in V(H)$.
 \end{exm}
 
 Now, we can show that specific linearly dependent vertices induce intriguing properties of the probability transition matrices. For some $U\subseteq V(H)$, we denote the event $\bigcup\limits_{u\in U}(X_t=u)$ as $(X_t\in U)$, and similarly, $(X_t\notin U)=\bigcup\limits_{u\in V(H)\setminus U}(X_t=u)$.
\begin{thm}
    \label{par-part-rw}
      Let $H$ be a hypergraph, and  $U$ and $V$ form an equal partition of hyperedges in $H$. If $\{X_t:t\in\mathbb{T}\}$ is a random walk on $H$ with uniform probability distribution, then $\Pr(X_t\in U|X_{t-1}\notin U\cup V )=\Pr(X_t\in V|X_{t-1}\notin U\cup V )$ for all $t\in \mathbb{N}$.
\end{thm}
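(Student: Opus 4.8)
The plan is to compute both conditional probabilities explicitly using the two-stage description of the transition $P_{wv} = \sum_{e\in E_w(H)\cap E_v(H)} r_{we} s^w_{ev}$ and the fact that, for a uniform random walk, $r_{we} = \frac{1}{|E_w(H)|}$ and $s^w_{ev} = \frac{1}{|e|}$ (or $\frac{1}{|e|-1}$ in the non-lazy case) depends only on $|e|$, not on which particular vertex $v\in e$ we land on. The key structural input is that $U$ and $V$ form an equal partition of hyperedges, so $|U\cap e| = |V\cap e|$ for every $e\in E(H)$, which is exactly the hypothesis of \Cref{par-part}.

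First I would unpack the left-hand side. Conditioning on $X_{t-1}\notin U\cup V$ and summing over the possible values $w\in V(H)\setminus(U\cup V)$ of $X_{t-1}$, we get
\[
\Pr(X_t\in U\mid X_{t-1}\notin U\cup V) = \frac{\sum_{w\notin U\cup V}\Pr(X_{t-1}=w)\,\Pr(X_t\in U\mid X_{t-1}=w)}{\Pr(X_{t-1}\notin U\cup V)},
\]
and similarly for $V$ on the right-hand side, with the same denominator. So it suffices to show that $\Pr(X_t\in U\mid X_{t-1}=w) = \Pr(X_t\in V\mid X_{t-1}=w)$ for every fixed vertex $w\notin U\cup V$. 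Now
\[
\Pr(X_t\in U\mid X_{t-1}=w) = \sum_{v\in U} P_{wv} = \sum_{v\in U}\sum_{e\in E_w(H)\cap E_v(H)} \frac{1}{|E_w(H)|}\cdot\frac{1}{|e|} = \frac{1}{|E_w(H)|}\sum_{e\in E_w(H)} \frac{|U\cap e|}{|e|},
\]
where I have interchanged the order of summation, noting that for fixed $e\in E_w(H)$ the inner count over $v\in U$ with $e\in E_v(H)$ is precisely $|U\cap e|$ (here it matters that $w\notin U\cup V$, so no vertex of $U$ or $V$ equals $w$ and the lazy term causes no trouble; in the non-lazy case with $\frac{1}{|e|-1}$ the same computation goes through verbatim). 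The analogous computation gives $\Pr(X_t\in V\mid X_{t-1}=w) = \frac{1}{|E_w(H)|}\sum_{e\in E_w(H)} \frac{|V\cap e|}{|e|}$. Since $|U\cap e| = |V\cap e|$ for all $e\in E(H)$ by the equal-partition hypothesis (invoking \Cref{par-part}), these two sums are term-by-term equal, so the two conditional probabilities agree, and substituting back into the weighted averages finishes the proof.

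I do not expect a genuine obstacle here; the only points requiring care are bookkeeping ones: (i) making sure the common denominator $\Pr(X_{t-1}\notin U\cup V)$ is nonzero, which one should either assume implicitly (the conditional probability is only meaningful then) or sidestep by phrasing the conclusion as an identity of unnormalized quantities; (ii) handling the lazy versus non-lazy cases uniformly — both work because $s^w_{ev}$ depends only on $|e|$ and $w\notin U\cup V$ means the self-loop contribution never lands inside $U$ or $V$; and (iii) justifying the interchange of the finite sums over $v\in U$ and $e\in E_w(H)$, which is immediate since everything is finite and nonnegative. The conceptual content is entirely carried by the equal-partition condition via \Cref{par-part}; the rest is a direct expansion of the transition probabilities.
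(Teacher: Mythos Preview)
Your proposal is correct and follows essentially the same route as the paper: both reduce to showing $\sum_{u\in U}P_{wu}=\sum_{v\in V}P_{wv}$ for each fixed $w\notin U\cup V$ by interchanging the vertex and hyperedge sums and then invoking $|U\cap e|=|V\cap e|$. One minor remark: that identity is the \emph{definition} of an equal partition of hyperedges, so the appeal to \Cref{par-part} is unnecessary (though harmless); your handling of the conditioning and the lazy/non-lazy distinction is, if anything, more careful than the paper's.
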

 \begin{proof}
 First, we show that for all $w\in V(H)\setminus (U\cup V)$ we have $\sum\limits
_{u\in U}P_{wu}=\sum\limits
_{v\in V}P_{wv}$.
 If $w\in V(H)\setminus(U\cup V)$,
     $$\sum\limits_{u\in U}P_{wu}=\sum\limits_{u\in U}\sum\limits_{e\in E_w(H)\cap E_u(H)}r_{we}s_{eu}^w=\sum\limits_{e\in E_w(H)}\sum\limits_{u\in e\cap U}r_{we}s_{ev}^w.$$
      Since $U$ and $V$ form an equal partition of hyperedges in $H$, $|e\cap U|=|e\cap V|$ for all $e\in E(H)$. Suppose that $e\cap U=\{u_1,\ldots,u_{k_e}\}$, and  $e\cap V=\{v_1,\ldots,v_{k_e}\}$ for all $e\in E_w(H)$. Since $w\notin U\cup V$, and  the probability distribution is uniform, $s_{eu_i}^w=s_{ev_i}^w$ for all $i=1,\ldots,k_e$.
     Consequently,
     $$\sum\limits_{u\in U}P_{wu}=\sum\limits_{e\in E_w(H)}\sum\limits_{i=1}^{k_e}r_{we}s_{eu_i}^w=\sum\limits_{e\in E_w(H)}\sum\limits_{i=1}^{k_e}r_{we}s_{ev_i}^w  =\sum\limits_{v\in V}P_{wv}.$$
     Therefore, $\Pr(X_t\in U|X_{t-1}\notin U\cup V )=\sum\limits_{w\in V(H)\setminus (U\cup V)} \sum\limits_{u\in U}P_{wu}=\sum\limits_{w\in V(H)\setminus (U\cup V)} \sum\limits_{v\in V}P_{wu}=\Pr(X_t\in V|X_{t-1}\notin U\cup V )$.
 \end{proof}
Therefore,  if $U\cup V$ is an equal partition of hyperedges in a hypergraph $H$, then in any random walk with a uniform probability distribution, the probability of transition of the random walk from outside of  $W=U\cup V$ is equally distributed between $U$, and $V$.
 The \Cref{par-part} leads us to the following corollary of the \Cref{par-part-rw}.
 \begin{cor}
     Let $H$ be a hypergraph, and $\{X_t:t\in\mathbb{T}\}$ be a random walk on $H$ with uniform probability distribution. If $U$, and $V$ are two disjoint subsets of $V(H)$ with $A_{G_H}\tilde x=0$ where $x=\chi_U-\chi_V$, then
     $$\Pr(X_t\in U|X_{t-1}\notin U\cup V)=\Pr(X_t\in V|X_{t-1}\notin U\cup V)\text{~for all~}t\in \mathbb{N}.$$
 \end{cor}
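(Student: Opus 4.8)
The plan is to obtain this corollary by chaining together two results already proved in the excerpt, namely \Cref{par-part} and \Cref{par-part-rw}. The first step is to translate the spectral hypothesis $A_{G_H}\tilde x=0$, where $x=\chi_U-\chi_V$, into a purely combinatorial statement about $U$ and $V$. Since $U$ and $V$ are assumed disjoint, the function $x=\chi_U-\chi_V$ takes the value $1$ on $U$, the value $-1$ on $V$, and $0$ on $V(H)\setminus(U\cup V)$, which is exactly the form of the vector appearing in \Cref{par-part}. Hence \Cref{par-part} applies directly and tells us that $A_{G_H}\tilde x=0$ holds if and only if $U$ and $V$ form an equal partition of hyperedges in $H$, i.e.\ $|U\cap e|=|V\cap e|$ for every $e\in E(H)$.

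The second step is simply to feed this conclusion into \Cref{par-part-rw}. The hypotheses of that theorem are precisely that $U$ and $V$ form an equal partition of hyperedges and that $\{X_t:t\in\mathbb{T}\}$ is a random walk on $H$ with uniform probability distribution, both of which are now in place. Its conclusion is exactly
\[
\Pr(X_t\in U\mid X_{t-1}\notin U\cup V)=\Pr(X_t\in V\mid X_{t-1}\notin U\cup V)\quad\text{for all }t\in\mathbb{N},
\]
which is what we want. So the proof is a two-line deduction: apply \Cref{par-part} to pass from the nullspace condition to the equal-partition condition, then apply \Cref{par-part-rw}.

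I do not expect any genuine obstacle here; this is a bookkeeping corollary whose entire content is the observation that the hypothesis $A_{G_H}\tilde x=0$ is, via \Cref{par-part}, just a restatement of the equal-partition hypothesis of \Cref{par-part-rw}. The only point meriting an explicit sentence is the remark above that disjointness of $U$ and $V$ is what makes $\chi_U-\chi_V$ have the precise $\{+1,-1,0\}$ form required for \Cref{par-part} to apply verbatim; beyond that, nothing new needs to be computed.
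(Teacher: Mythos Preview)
Your proposal is correct and matches the paper's own proof essentially line for line: the paper likewise invokes \Cref{par-part} to convert $A_{G_H}\tilde x=0$ into the equal-partition-of-hyperedges condition, and then applies \Cref{par-part-rw} to conclude. Your added remark about disjointness ensuring the $\{+1,-1,0\}$ form of $\chi_U-\chi_V$ is a harmless clarification beyond what the paper writes.
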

 \begin{proof}
     Since by \Cref{par-part}, the equation $A_{G_H}\tilde x=0$ with $x=\chi_U-\chi_V$ implies $ U$, and $V$ form an equal partition of hyperedges in $H$. Consequently, the \Cref{par-part-rw} leads us to the result.
 \end{proof}

For any $u\in V(H)$, the subset $w\in \bigcap\limits_{j<t}(X_t=u|X_j\ne u)$ of $\Omega$ represents the \emph{event of first hitting $u$ at time $t\in \mathbb{T}$}. Since a unit $W_E$ with cardinality at least $2$ induces some specific equal partitions of hyperedges, for two distinct $u,v\in  W_E$, we can say something stronger than the \Cref{par-part-rw}.
 Since there exists a bijection $\mathfrak{f}_{uv}: \bigcap\limits_{j<t}(X_t=u|X_j\ne u)\to \bigcap\limits_{j<t}(X_t=v|X_j\ne v)$ defined by $\mathfrak{F}_{uv}(\omega_u)=\omega_v$, for each $\omega_u\in \bigcap\limits_{j<t}(X_t=u|X_j\ne u)$. Here $\omega_v:\mathbb{T}\to V(H)$ is such that $\omega_v(t)=\omega_u(t) $ if $\omega_u(t)\notin\{u,v\}$, otherwise $\omega_u(t)=v$ if $\omega_v(t)=u$, and $\omega_u(t)=u$ if $\omega_v(t)=v$. Since $E_u(H)=E=E_v(H)$, if the probability transition matrix is such that for all $w\notin\{u,v\}$,  $r_{ue}=r_{ve}$, and $s^u_{ew}=s^u_{ew}$ for all $e\in E$, then $P_{uw}=P_{vw}$. Similarly, if $s^u_{ew}=s^u_{ew}$ then $P_{wu}=P_{wv}$ for all $w\notin\{u,v\}$. These two conditions on $P$ are satisfied for random walks with uniform distribution. Thus, in these cases, we have
  $\Pr(\{\omega_u\})=\Pr(\{\omega_v\})$. In \cite{unit},
using this probability preserving bijection $\mathfrak{f}_{uv}$, it is proved that $\Pr(\bigcap\limits_{j<t}(X_t=u|X_j\ne u)))=\Pr(\bigcap\limits_{j<t}(X_t=v|X_j\ne v)$. The {expected hitting time of $v\in V(H)$} after starting at $w$, $\mathbb{E}_w^v$ is defined as $\mathbb{E}_w^v=\sum\limits_{t=1}^\infty t\Pr(\bigcap\limits_{j<t}(X_t=v,X_0=w|X_j\ne v))$. For random walks with uniform distributions that are described in the \Cref{ex-uniform-rand}, we have $\mathbb{E}_w^u=\mathbb{E}_w^v $ when $u,v\in W_E$, and $w\notin\{u,v\}$(\cite{unit}). 

 \subsection{Hypergraph Centrality}\label{cent}
The centrality of a network is a real-valued function on the vertices (or edges) of the network, which encodes the importance of each vertices (or edges) in the network. We refer the reader to \cite{bavelas1950communication,beauchamp1965improved,borgatti2006graph,newman2008mathematics} for various studies in network centrality.
A vertex centrality in a hypergraph $H$ is a function $x:V(H)\to(0,\infty)$ such that $x(v)$ represents the importance of the vertex $v$. Since "important" is a relative word, a centrality function $x$ is defined according to the sense of the importance captured by the centrality. Similarly, a hyperedge centrality is a function $\alpha: E(H)\to (0,\infty)$ such that $\alpha(e)$ represents the importance of $e$ in the hypergraph $H$. Suppose that $c_V:V(H)\to[0,\infty)$ and $c_E:E(H)\to[0,\infty)$ are, respectively, vertex and hyperedge centrality in $H$ such that 
$c_{V}(u)\propto \sum\limits_{e\in E_u(H)}c_{E}(e)w(e)$ and $c_{E}(e)\propto\sum\limits_{u\in e}c_{V}(u)$. Here, $w:E(H)\to (0,\infty)$ is a function and $w(e)$ is the weight of the hyperedge $e$. That is, the importance of a vertex depends on the importance of all the hyperedges incident with the vertex, and the centrality of a hyperedge depends on all the vertices belonging to the hyperedge. Therefore, there exist two positive reals $k,\gamma$ such that $c_{V(H)}(u)=k \sum\limits_{e\in E_u(H)}c_{E(H)}(e)w(e)$, and $ c_{E(H)}(e)=\gamma\sum\limits_{u\in e}c_{V(H)}(u)$. This leads to $ c_{V}(u)=k\gamma \sum\limits_{e\in E_u(H)\cap E_v(H)}w(e)c_{V}(v)$. Therefore, $c_V$ is an eigenvector of the matrix $M_V=\left[\sum\limits_{e\in E_u(H)\cap E_v(H)}w(e)\right]_{u,v\in V(H)}$, which is a variation of the signless Laplacian matrix $Q_H$. Since $M_V$ is a symmetric matrix with positive entries and the positive valued function $ c_V$ is an eigenvector of $M_H$. By Perron-Frobenius Theorem \cite[p-534, Theorem 8.4.4]{mat-anal-h-j}, $ c_V$ is the Perron vector of $M_V$, and the eigenvalue $k\gamma$ is the spectral radius of $M_V$. Given any unit $W_E$ if $|W_E|>1$, then $W_E$ is a linearly dependent set of vertices with any vector in $S_{W_E}$ is a coefficient vector of $W_E$. By the \Cref{q-ld}, $M_Hx=0$ for all $x\in S_{W_E}$. Since $M_H$ is a symmetric matrix, and for all $x\in S_{W_E}$, the centrality function $c_V$, and $x$ are eigenvectors of $ M_V$ corresponding to two different eigenvalues $0$, and $k\gamma$. Therefore, $\sum\limits_{v\in V(H)}x(v)c_V(v)=0$. For two distinct $u,v\in W_E$ we have  $x_{uv}=\chi_{\{u\}}-\chi_{\{v\}}\in S_{W_E}$. Thus, $\sum\limits_{w\in V(H)}x_{uv}(w)c_V(w)=0$, and that leads to $ c_V(u)=c_V(v)$. Therefore, $c_V$ is constant on each unit of $H$.

 Given a random walk on a hypergraph $H$, the \textit{random walk closeness centrality} of a hypergraph $H$ is a function $c_r:V(H)\to [0,\infty)$ defined by
	 $c_r(v)=\frac{|V(H)|}{\sum\limits_{u\in V(H)}\mathbb{E}_u^v}$.
   The random walk closeness centrality, $c_r(v)$ of $v$, measures how close the vertex $v$ is from all the other vertices.  Let $W_E$ be a unit in $H$. If $\Pr(X_0=u)=\Pr(X_0=v)$  for two distinct $u,v\in W_E$, then using the probability preserving bijection, it can be proved that $\mathbb{E}_v^u=\mathbb{E}_u^v $. Since $\mathfrak{f}_{uv}(\bigcap\limits_{j<t}(X_t=v|X_j\ne v,X_0=u))=\bigcap\limits_{j<t}(X_t=u|X_j\ne u,X_0=u)$, and $\mathfrak{f}_{uv}$ is a probability preserving map, we have $\mathbb{E}_v^u=\mathbb{E}_u^v $.
   Now, $\mathbb{E}_w^u=\mathbb{E}_w^v $ for $u,v\in W_E$, and $w\notin\{u,v\}$, and $\mathbb{E}_v^u=\mathbb{E}_u^v $ lead us to $c_r(u)=c_r(v)$. That is, 
for any
    random walk on $H$ with uniform distribution, the random walk closeness centrality is constant on each unit in $H$. 

    Given a random walk on a hypergraph $H$, the random walk based betweenness centrality, $c_B:V(H)\to (0,\infty)$ is defined as $c_B(w)=\sum\limits_{u,v\in V(H)}\frac{\sum\limits_{t\in \mathbb{T}}Prob(X_t=v|X_0=u,X_i=w,\text{~for some~}i< t)}{\sum\limits_{t\in \mathbb{T}} Prob(X_t=v|X_0=u)}$   for any $w\in V(H)$. Suppose that $W_E$ is a unit, and $u,v\in W_E$. If the probability distribution associated with the random walk is uniform, then for all $w\notin \{u,v\}$, $\mathfrak{f}_{uv}((X_t=v|X_0=u,X_i=w,\text{~for some~}i< t))=(X_t=u|X_0=v,X_i=w,\text{~for some~}i< t)$, and $\mathfrak{f}_{uv}((X_t=v|X_0=u))=(X_t=u|X_0=v)$, and the map $\mathfrak{f}_{uv}$ is probability preserving, $c_B(u)=c_B(v)$ for all $u,v\in W_E$. Therefore, for a random walk with a uniform probability distribution on a hypergraph, the random walk-based betweenness centrality is constant on each unit in the hypergraph. 

Let $H$ be a hypergraph. A pair of distinct units $W_{E_i},W_{E_j}\in\mathfrak{U}(H)$ are called \emph{unit-neighbours} if $W_{E_i}\cup W_{E_j}\subseteq e$ for some $e\in E(H)$. Note that this definition suggests that no unit is unit-neighbour of itself. The graph projection of a hypergraph $H$ is a graph $\Gamma_H$ such that $V(\Gamma_H)=\mathfrak{U}(H)$ and $E(\Gamma_H)=\{\{W_{E_i},W_{E_j}\}:W_{E_i},W_{E_j}\text{ are unit-neighbours}\}$. For the notions of a walk, path and distance in a graph, we refer the reader to \cite[Chapter-2]{harary1969graph}. A path of length $n$, in a graph $G$ is an alternating sequence $v_0e_1v_1e_2\ldots v_{n-1}e_nv_n $ where $v_0,v_1,\ldots,v_n\in V(G)$, $e_1,e_2,\ldots,e_n\in E(H)$, and if $i\ne j$  then $v_i$, $v_j$ are distinct. The distance $d_G:V(G)\times V(G)\to [0,\infty)$ is such that $d_G(u,v)=$ the length of shortest path between $u,v$ for two distinct vertices $u,v$, and $d_G(u,u)=0$ for all vertex $u$.
Suppose that $H$ is a hypergraph. We define the \emph{status of} $v\in V(H)$, denoted by $s(v)$, as $s(v)=\sum\limits_{u\in V(H)}\hat d_H(u,v)$. For a hypergraph $H$, the metric $d_{\Gamma_H}$ on $V(\Gamma_H)$ induces a function $\hat{d}_H:V(H)\times V(H)\to [0,\infty)$ defined as
$\hat d_{H}(u,v)=d_{\Gamma_H}(W_{E_u(H)},W_{E_v(H)}).$
Since $\hat d_{H}(u,v)=0$ for two distinct vertex $u,v\in W_{E_0} $, for some $W_{E_0}\in \mathfrak{U}(H)$, $\hat{d}$ is a pseudo metric on $V(H)$.
For any connected graph $G$, the closeness centrality $cl_G:V(G)\to \mathbb{R}$ is defined as $cl_G(v)=(\sum\limits_{u\in V(G)}\ d_G(u,v))^{-1}$ (see \cite[Equation.3.2]{koschutzki2005centrality}).
The higher value of $s(v)$ indicates the larger unit distances of $v$ from the other vertices of $H$. Thus, for a connected hypergraph $H$ with at least two hyperedges, we define the \emph{unit based closeness centrality} $cl_{\mathfrak{U}(H)}:V(H)\to\mathbb{R}$ as  $cl_{\mathfrak{U}(H)}(v)=s(v)^{-1}$ for all $v\in V(H)$.
For any $v\in V(H)$, the higher value of $cl_{\mathfrak{U}(H)}(v)$ indicates smaller unit distances of $v$ from all other vertices of $H$.  The vertex centrality $cl_{\mathfrak{U}(H)}$ is constant on $W_{E}$ for all $W_{E}\in \mathfrak{U}(H)$. The \emph{unit-eccentricity} of $v\in V(H)$ is defined by
$e_H(v)=\max\{\hat d_H(v,u):u\in V(H)\}.$ The unit-eccentricity is constant on each unit of a hypergraph.

    Thus, for specific hypergraph centralities, we have seen units are some clusters of vertices with equal centralities. Therefore, these vertex centralities can be represented as a function of the form $\hat x:\mathfrak{U}(H)\to(0,\infty)$. Given any specific vertex centrality $c: V(H)\to \mathbb{R}$, the $c$-center of $H$ is the collection of vertices $\{v\in V(H):c(v)=\max\{{c(u):u\in V(H)\}}\}$. Now, if $c$ is constant on each unit, then the $c$-center of $H$ is a union of units. Thus, in this section, we have seen some centrality $c$ such that the $c$-center of $H$ is the union of units.
\subsection{Counting methods and linear dependent vertices and hyperedges}\label{count}
For some natural number $n$, and  $1\le k\le n-1$, the Pascal's formula says
$$\binom{n}{k}=\binom{n-1}{k}+\binom{n-1}{k-1}.$$ 
A combinatorial proof of Pascal's formula can be found in \cite{combinatorics-brualdi}. This proof is based on the fact that $\binom{n}{k}$ is the total number of $k$-element subsets of an $n$-element subset $N$, and a particular object $a(\in N)$ belongs to $\binom{n-1}{k-1} $ of the $k$-sets, and $a$ does not belong to $\binom{n-1}{k}$ of them. This proof can be rewritten in terms of linearly dependent hyperedges of a hypergraph. Let $N=\{1,2,\ldots,n\}$.
Consider the hypergraph $H$ with $V(H)=2^N$, the power set of $N$ and $E(H)=\{e_0,e_1,e_2\}$, where $e_0=\binom{n}{k}$, the collection of all the $k$-element subsets of $N$, $e_1=\{A\in \binom{n}{k}:a\in A \}$, and  $e_2=\{A\in \binom{n}{k}:a\notin A \}$ for some fixed $a\in N$. The hyperedge $e_0$ can be written as the disjoint union $e_1\cup e_2$. Thus, $E(H)$ is linearly dependent and $\chi_{e_0}=\chi_{e_1}+\chi_{e_2}$. Therefore, $(\chi_{e_0})(e)=(\chi_{e_1}+\chi_{e_2})(e) $, and $\binom{n}{k}=\sum\limits_{e\in E(H)}\chi_{e_0}(e)=\sum\limits_{e\in E(H)}(\chi_{e_1}+\chi_{e_2})(e)=\binom{n-1}{k}+\binom{n-1}{k-1} $.

The simplest form of the Pigeonhole Principle says that if $n+1$ objects are distributed in $n$ boxes, then at least one box contains two or more objects. This can be proved as a consequence of the \Cref{0-ev-ld}. Consider the hypergraph in which non-empty boxes are the hyperedges, and the objects are vertices. Thus, the number of vertices $|V(H))|=n+1$, and $|E(H)|\le n$. Therefore, the rank of the incidence matrix $I^T_H$ can be at most $n$. Therefore, there exists a non-zero vector $x:V(H)\to\mathbb{R}$ such that $I^T_Hx=0$. Therefore, $A_{G_H}\Tilde{x}=0$. Therefore, by \Cref{0-ev-ld} the vertices are linearly dependent and without loss of generality, we can assume $s_{v_0}=\sum\limits_{i=1}^nc_is_{v_i}$, where $V(H)=\{v_0,v_1,\ldots,v_n\}$. Since each object is distributed in some box, $s_{v_0}$ is non-zero, that is, $s_{v_0}(e)=1$ for some $e\in E(H)$. Consequently, $s_{v_0}=\sum\limits_{i=1}^nc_is_{v_i}$ implies that $s_{v_i}(e)\ne 0$ for some $i\in \{1,\ldots,n\}$. Thus, $v_0,v_i\in e$, and this proves the above-mentioned simple form of the Pigeonhole Principle.

 Given $n$ positive integers $q_1,\ldots,q_n$, suppose that $q_1+\ldots+q_n-n+1$ objects are distributed into $n$ boxes. Consider the hypergraph $H$, in which the objects are vertices, and each non-empty box is a hyperedge. Since each object can be contained in at most one box, $E_v(H)$ is a singleton set for each $v\in V(H)$. Let $E(H)=\{e_1,\ldots,e_m\}$ for some $m\le n$. Let the unit $W_{E_i}=\{v\in V(H):E_v(H)=\{e_i\}\}$ for all $i=1,\ldots,m$. Consequently, the unit-contraction $H/\mathcal{R}_u(H)$ has $m$ vertices and $m$ hyperedges. If $|W_{E_i}|<q_i$ for all $i=1,2,\ldots,m$, then $|V(H)|\le q_1+\ldots+q_m-m+1<q_1+\ldots+q_n-n+1$, a contradiction. Therefore, given $n$ positive integers $q_1,\ldots,q_n$, if $q_1+\ldots+q_n-n+1$ objects are distributed into $n$ boxes, then either the first box contains at least $q_1$, or the second box contains $q_2$ objects, $\ldots,$ or the $n$-th box contains at least $q_n$ objects. 
This is called the stronger version of the Pigeonhole Principle, a generalization of the simple form of the Pigeonhole Principle.

For three non-empty set $A$, $B$, and $C$ the principle of inclusion–exclusion states that 
\begin{equation}\label{ex-in-3}
    |A\cup B\cup C|=(|A|+|B|+|C|)-(|A\cap B|+|B\cap C|+|C\cap A|)+(|A\cap B\cap C|).
\end{equation}
Consider a hypergraph $H$ with $V(H)=A\cup B\cup C$, and $E(H)=\{A\cup B\cup C, A\cap B, B\cap C, C\cap A, A\cap B\cap C, A, B,C\}$. Here, the set of hyperedges forms an equal partition of stars $E,F$ of hyperedges, where $E=\{A\cup B\cup C,A\cap B\cap C,A, B,C\}$, and $F=\{A\cap B, B\cap C, C\cap A\}$. Consequently, $\chi_{_{A\cup B\cup C}}+\chi_{_{A\cap B\cap C}}+\chi_{_A}+\chi_{_B}+\chi_{_C}=\chi_{_{A\cap B}}+\chi_{_{B\cap C}}+\chi_{_{C\cap A}}$. Therefore,
$\sum\limits_{a\in A\cup B\cup C}(\chi_{_{A\cup B\cup C}}+\chi_{_{A\cap B\cap C}}+\chi_{_A}+\chi_{_B}+\chi_{_C})(a)=\sum\limits_{a\in A\cup B\cup C}(\chi_{_{A\cap B}}+\chi_{_{B\cap C}}+\chi_{_{C\cap A}})(a)$, and this gives the \Cref{ex-in-3}.

\section{Discussion and conclusion}
\label{sec:discussion}
Given a hypergraph $H$, the vectors in the nullspace of $A_{G_H}$ correspond to a linearly dependent set of vertices or hyperedges. Such linear dependencies lead to a specific sub-structure in the hypergraph $H$. Here, we identify some of these substructures and provide the form of the vector in the nullspace of $A_{G_H}$, that characterizes these substructures. We show that the singularity of $A_{G_H}$, due to units within $H$, can be mitigated through unit contraction. However, singularities may arise from sources beyond units, like other linear dependencies among vertices and hyperedges, for which we currently lack prescribed resolution methods to remove that singularity, warranting further exploration.

Here, we show that PHP can be explained by considering the concept of linearly independent hyperedges and vertices. However, we have not established a clear link between these two domains. Hence, it presents an intriguing direction for further investigation. Given that both the PHP and Inclusion-Exclusion principles can be interpreted using linearly independent hyperedges and vertices, it prompts curiosity regarding the potential connections between these notions in hypergraph theory and other counting methods.

We observe that specific hypergraph centralities belong to a subspace of $\mathbb{R}^{V(H)}$ of dimension $|\mathfrak{U}(H)|$, and the existence of any unit with cardinality at least $2$ ensure that this subspace is a proper subspace of $\mathbb{R}^{V(H)}$. This fact indicates that specific hypergraph structures related to linearly independent rows and columns leave their traces in some proper subspaces of $\mathbb{R}^{V(H)}$. These traces would be an intriguing direction to study. For example, if a vertex function $x:V(H)\to\mathbb{R}$ is such that the value $x(v)$ depends on $E_v(H)$, then $x$ can be embedded into an $|\mathfrak{U}(H)|$ dimensional vector space.
\label{sec:conclusion}
	\section*{Acknowledgement} The author acknowledges the financial assistance provided by the National Institute of Science Education and Research through the Department of Atomic Energy plan project RIA 4001 (NISER).



\end{document}